\documentclass[12pt,a4paper,twoside,reqno]{amsart}

\usepackage{amsmath,amssymb,amsfonts,amsthm,mathrsfs}
\usepackage{enumitem}
\usepackage{times,hyperref,color}
\usepackage{graphicx}
\usepackage{cancel}

\usepackage{cite}
\usepackage[toc,page]{appendix}
\usepackage{bm}
\usepackage{tikz}
\usetikzlibrary{cd}
\usepackage{accents}
\usepackage{float}

\newfloat{diagram}{htbp}{lod}
\floatname{diagram}{Diagram}

\newcommand{\G}{\mathcal{G}}
\newcommand{\bz}{\bm{z}}

\newcommand{\n}{{}^{(n)}}

\newcommand{\B}{\mathcal{B}}
\newcommand{\bZ}{\bm{Z}}
\newcommand{\jet}{\mathrm{j}}
\newcommand{\hB}{\widehat{\B}}
\newcommand{\hrho}{\widehat{\rho}}
\newcommand{\ii}{^{(\infty)}}

\newcommand{\ba}{{\bm{a}}}
\newcommand{\bb}{{\bm{b}}}

\newcommand{\Dt}{{\rm D}}
\newcommand{\dt}{{\rm d}}

\newcommand{\re}{{\rm Re}\,}
\newcommand{\im}{{\rm Im}\,}

\newcommand{\omu}{\overline{\mu}}
\newcommand{\bmu}{\bm{\mu}}

\newcommand{\ow}{\overline{w}}
\newcommand{\oW}{\overline{W}}
\newcommand{\oxi}{\overline{\xi}}
\newcommand{\pp}[2]{\frac{\partial #1}{\partial #2}}
\newcommand{\vv}{\mathbf{v}}
\let\mathcal\mathscr

\newtheorem{Theorem}{Theorem}[section]

\newtheorem{Lemma}[Theorem]{Lemma}

\theoremstyle{definition}
\newtheorem{Definition}[Theorem]{Definition}

\newtheorem{Remark}[Theorem]{Remark}

\newcommand{\oz}{\bar{z}}
\newcommand{\oZ}{\overline{Z}}

\def\hexnumber#1{\ifcase#1 0\or1\or2\or3\or4\or5\or6\or7\or8\or9\or
 A\or B\or C\or D\or E\or F\fi}

\edef\msbhx{\hexnumber\symAMSb}   

\mathchardef\emptyset="0\msbhx3F
\def\i{\,{\rm i}\,}

\makeatletter
\@namedef{subjclassname@2020}{%
  \textup{2020} Mathematics Subject Classification}
\makeatother

\subjclass[2020]{32V40, 58K50, 53A55.}

\allowdisplaybreaks[4]

\begin{document}

\title{
Holomorphic normal forms of six-dimensional
\\ totally nondegenerate CR manifolds in $\bm{\mathbb{C}^5}$
}

\author{Masoud Sabzevari}
\address{Department of Pure Mathematics, Faculty of Mathematics and Statistics, University of Isfahan, 81746-73441 Isfahan, Iran and School of
Mathematics, Institute for Research in Fundamental Sciences (IPM), 19395-5746, Tehran, Iran}
\email{sabzevari@ipm.ir}

\date{\number\year-\number\month-\number\day}

\begin{abstract}
The class of six-dimensional totally nondegenerate CR submanifolds in $\mathbb C^5$ contains an {\it infinite number} of models parameterized by nonzero pairs $(\ba, \bb)\in\mathbb C\times\mathbb R$ appearing in their defining equations. Employing the equivariant moving frame method, we construct normal forms of this class.  The applied normalizations yield into fourteen biholomorphically inequivalent subclasses, each with its own normal form. We also determine the Lie algebra of infinitesimal CR automorphisms associated with each subclass. Finally, by applying the method of involution, we prove the convergence of the constructed normal forms.
\end{abstract}

\maketitle

\pagestyle{headings} \markright{Holomorphic normal form of totally nondegenerate manifolds in $\mathbb C^5$}
\numberwithin{equation}{section}

\section{Introduction}

The theory of Cauchy--Riemann (CR for short) manifolds has a rich history dating back to the 1907 work of Henri Poincar\'e, \cite{Poincare-1907}, who showed that two real hypersurfaces in $\mathbb{C}^2$ are not in general biholomorphically equivalent.  The problem of finding the invariants to distinguish these real hypersurfaces was then solved by Cartan in \cite{Cartan-1932}, using his equivalence method of coframes.  About four decades later, Chern and Moser extended these results to arbitrary dimensions by developing the theory of {\it normal forms} for Levi nondegenerate hypersurfaces. For a modern treatment of normal forms and a survey on related open problems in CR geometry, we refer the reader to \cite{KKZ-17}.  Around the same time, Tanaka independently gave a different extension to higher dimensions using, what is now referred as, Tanaka's theory, \cite{Tanaka-1962,Tanaka-1976}.

For an arbitrary real manifold $M$ of dimension $\geq 2$, let $T^cM$ be an even dimensional sub-distribution of its tangent bundle $TM$ equipped with a fiber preserving complex structure $J\colon T^cM\rightarrow T^cM$, satisfying $J\circ J=-{\rm id}$. By definition, see \cite{BER}, the manifold $M$ is called an ({\it abstract}) {\it CR manifold}, with CR structure $T^cM$, if the following two conditions are satisfied:
\begin{itemize}
\item[a)] the intersection $T^{1,0}M\cap T^{0,1}M$ is trivial, where $T^{1,0}M = \{X-\i J(X): X\in T^cM\}$ and $T^{0,1}M=\overline{T^{1,0}M}$;
\item[b)] the holomorphic distribution $T^{1,0}M$ enjoys the Frobenius condition  $[T^{1,0}M, T^{1,0}M]\subset T^{1,0}M$.
\end{itemize}
For a CR manifold $M$, the integers $\frac{1}{2} {\rm rank}(T^cM)$ and ${\rm dim}_{\mathbb R}M-{\rm rank}(T^cM)$ are called the CR dimension and codimension of $M$.

Let $\mathfrak g^{-1} = T^cM$ and define $\mathfrak g^{-t} = [\mathfrak g^{-t+1}, \mathfrak g^{-1}]$ for every $t\geq 1$. In this successive construction, assuming that $\ell$ is the first integer with $\mathfrak g^{-\ell}=\{0\}$, the CR manifold $M$ is said to be {\it regular of depth} $\ell$ if the so-called {\it Tanaka symbol} (cf. \cite{Sab-MZ}):
\begin{equation}\label{Tanaka-Sym}
\mathfrak g^{-1}+\mathfrak g^{-2}+\ldots+\mathfrak g^{-\ell},
\end{equation}
equipped with the standard Lie bracket, forms a locally graded Lie algebra which spans the entire tangent space $TM$.  By definition, a regular CR manifold is {\it totally nondegenerate} whenever the truncation of its associated Tanaka symbol \eqref{Tanaka-Sym} with its last nonzero component $\mathfrak g^{-\ell}$ is isomorphic to the depth $\ell-1$ free algebra generated by $\mathfrak g^{-1}$.  We refer the reader to \cite{Beloshapka2004, Sab-MZ} for more details.

Totally nondegenerate CR manifolds were first introduced by Valerii Beloshapka in \cite{Beloshapka2004}, as part of his investigation on Poincar\'{e}-Chern-Moser {\it model surfaces}, \cite{Poincare-1907, Chern-Moser} (see also \cite{Kruglikov-26}). Beloshapka in \cite{Beloshapka2004}, associated  model surfaces to each class of totally nondegenerate CR manifolds of arbitrary CR dimension and codimension. 

In the fixed CR dimension one, the models associated to each class of totally nondegenerate CR manifolds of  codimensions one, two, and three are {\it unique}. In codimension four, however, the situation is remarkably different. Indeed, this class, which consists of six-dimensional totally nondegenerate CR submanifolds of $\mathbb C^5$, admits an {\it infinite number} of model surfaces.  As is shown in \cite{Beloshapka2006, Sab-SCM}, after applying preliminary normalizations, such CR submanifolds may be represented in the local coordinates $z, w^r = u_r+\i v^r, r = 1,\ldots, 4$, of $\mathbb C^5$ as the graph of four analytic functions
\begin{equation}\label{def-eq}
\begin{cases}
v^1=z\oz+{\rm O}_3(z,\oz,u), \\
v^2=\frac{1}{2}\,(z^2\oz+z\oz^2)+{\rm O}_4(z,\oz,u),\\
v^3=-\frac{\i}{2} \, (z^2\oz-z\oz^2)+{\rm O}_4(z,\oz,u),\\
v^4=\ba\, z^3\oz+\overline\ba\, z\oz^3+\bb\, z^2\oz^2+{\rm O}_5(z,\oz,u),
\end{cases}
\end{equation}
where $\ba\in\mathbb C$, $\bb\in\mathbb R$, and $(\ba, \bb)\neq (0,0)$.  Assigning the weights $[z] = [\oz] = 1, [u_1] = 2, [u_2] = [u_3] = 3$, and $[u_4]=4$ to the independent variables, the terms ${\rm O}_t(z,\oz,u)$ in \eqref{def-eq} denote sums taken over all monomials of weights $\geq t$. The lowest order nonzero monomials appearing in the defining equations \eqref{def-eq} ensure the total nondegeneracy of the manifold. Associated to the class of totally nondegenerate CR manifolds \eqref{def-eq}, we have Beloshapka's {\it model surfaces}
\begin{equation}\label{def-eq-model}
M(\ba, \bb):
\begin{cases}
v^1=z\oz,\\
v^2=\frac{1}{2}\,(z^2\oz+z\oz^2),\\
v^3=-\frac{\i}{2}\, (z^2\oz-z\oz^2),\\
v^4=\ba\, z^3\oz+\overline\ba\, z\oz^3+\bb\, z^2\oz^2,
\end{cases}
\end{equation}
parameterized by the pairs $(\ba, \bb)$.
These model surfaces --- which admit maximal possible CR symmetry dimensions among totally nondegenerate CR manifolds \eqref{def-eq} --- are not necessarily biholomorphically inequivalent. As is shown in \cite{Beloshapka2006, Mamai-13, Sab-SCM}, the equivalence between the models $M(\ba, \bb)$ with $\bb\neq 0$ --- and thus their associated {\it moduli spaces} --- is identified by the single real invariant
\begin{equation}\label{J}
\mathfrak{I}=\frac{\ba\overline{\ba}}{\bb^2}.
\end{equation}

The construction of normal forms and the solution to the equivalence problem for  three-dimensional (totally) nondegenerate CR manifolds in $\mathbb C^2$ are accomplished in the seminal works \cite{Cartan-1932,Chern-Moser}.  In $\mathbb C^3$, normal forms for the class of four-dimensional totally nondegenerate submanifolds was constructed by Beloshapka, Ezhov and Schmalz in \cite{BES}. Furthermore, the equivalence problem for five-dimensional totally nondegenerate submanifolds in $\mathbb C^4$ was solved by Merker and the author in \cite{5-cubic}, while their normal forms were obtained in \cite{Sab-JGA}.

The aim of the present paper is to construct normal forms for the aforementioned $6$-dimensional totally nondegenerate CR submanifolds in $\mathbb C^5$. The emergence of an infinite family of model surfaces within this class renders its normal form classification more intriguing and, at the same time, more challenging. As we will see, the desired normal forms fall into fourteen biholomorphically inequivalent subbranches displayed in Diagram \ref{branches}.

\begin{diagram}[htbp]
\begin{center}
\begin{tikzpicture}
\draw[thick] node[left] {{}} (0,0) -- (3/2,2.2) node[right] {{\scriptsize Branch A$^\prime$}};
\draw[thick] (0,0) -- (3/2,0) node[right] {{\scriptsize Branch A$^{\prime\prime}$}};
\draw[thick] (0,0) -- (3/2,-2.2) node[right] {{\scriptsize Branch B}};
\draw[thick] (3,2.2) -- (4,2.9) node[right] {{\scriptsize Branch A$^\prime$-1}};
\draw[thick] (3,2.2) -- (4,1.5) node[right] {{\scriptsize Branch A$^\prime$-2}};
\draw[thick] (3.1,0) -- (4,.7) node[right] {{\scriptsize Branch A$^{\prime\prime}$-1}};
\draw[thick] (3.1,0) -- (4,0) node[right] {{\scriptsize Branch A$^{\prime\prime}$-2}};
\draw[thick] (3.1,0) -- (4,-.7) node[right] {{\scriptsize Branch A$^{\prime\prime}$-3}};
\draw[thick] (5.9,-.72) -- (6.25,-.3) node[right] {{\scriptsize Branch A$^{\prime\prime}$-3-1}};
\draw[thick] (5.9,-.72) -- (6.25,-1.1) node[right] {{\scriptsize Branch A$^{\prime\prime}$-3-2}};
\draw[thick] (8.4,-1.1) -- (9,-.6) node[right] {{\scriptsize Branch A$^{\prime\prime}$-3-2-1}};
\draw[thick] (8.4,-1.1) -- (9,-1.6) node[right] {{\scriptsize Branch A$^{\prime\prime}$-3-2-2}};
\draw[thick] (2.9,-2.2) -- (4,-1.5) node[right] {{\scriptsize Branch B-1}};
\draw[thick] (2.9,-2.2) -- (4,-2.9) node[right] {{\scriptsize Branch B-2}};
\draw[thick] (5.8,1.5) -- (6.25,2) node[right] {{\scriptsize Branch A$^\prime$-2-1}};
\draw[thick] (5.8,1.5) -- (6.25,1) node[right] {{\scriptsize Branch A$^\prime$-2-2}};
\draw[thick] (5.7,-2.9) -- (6.25,-2.4) node[right] {{\scriptsize Branch B-2-1}};
\draw[thick] (5.7,-2.9) -- (6.25,-2.9) node[right] {{\scriptsize Branch B-2-2}};
\draw[thick] (5.7,-2.9) -- (6.25,-3.4) node[right] {{\scriptsize Branch B-2-3}};
\draw[thick] (8.3,1) -- (9,1.65) node[right] {{\scriptsize Branch A$^\prime$-2-2-1}};
\draw[thick] (8.3,1) -- (9,0.4) node[right] {{\scriptsize Branch A$^\prime$-2-2-2}};
\draw[thick] (8.25,-3.4) -- (9,-2.9) node[right] {{\scriptsize Branch B-2-3-1}};
\draw[thick] (8.25,-3.4) -- (9,-4.1) node[right] {{\scriptsize Branch B-2-3-2}};
\end{tikzpicture}
\end{center}
\caption{The $14$ inequivalent branches. }\label{branches}
\end{diagram}

For each branch we construct the associated normal form and determine its holomorphic isotropy group, which is either trivial or of dimension one or two.  The branches {\bf A$^\prime$}, {\bf A}$^{\prime\prime}$ and {\bf B} in Diagram \ref{branches} emerge according to whether $\ba$ and $\bb$ in \eqref{def-eq} vanish or not. Branch {\bf A$^\prime$} corresponds to the case where $\ba$ and $\bb$ are identically nonzero while {\bf A$^{\prime\prime}$} corresponds to $\ba \neq 0$ and $\bb = 0$. Branch {\bf B}, on the other hand, considers the case where $\ba = 0$ and $\bb \neq 0$.

By definition \cite{Ebenfelt-98, Sab-26}, a normal form $N$ of a given manifold $M$ is {\it complete} if the corresponding {\it normal form transformation} $M\rightarrow N$ is unique modulo a {\it finite} dimensional choice of normalizations. In this paper, we find out that

\begin{Theorem}
\label{main-result}
Every $6$-dimensional totally nondegenerate CR submanifold of $\mathbb C^5$ transforms to a {\sl complete} and {\sl convergent} normal form with the defining Taylor series expansions
\begin{equation}\label{NF-complete}
\aligned
v^1&=z\oz+\sum_{j+k+|\ell|\geq 5}\,\frac{V^1_{Z^j\oZ^k U^\ell}}{j! k! \ell !}\,z^j \oz^k u^\ell, \\
v^2&=\frac{1}{2}\,(z^2\oz+z\oz^2)+\sum_{j+k+|\ell|\geq 5}\,\frac{V^1_{Z^j\oZ^k U^\ell}}{j! k! \ell !}\,z^j \oz^k u^\ell,\\
v^3&=-\frac{\i}{2} \, (z^2\oz-z\oz^2)+\frac{V^3_{Z^3\oZ}}{6}\,z^3 \oz+\frac{V^3_{Z\oZ^3}}{6}\,z \oz^3+\sum_{j+k+|\ell|\geq 5}\,\frac{V^1_{Z^j\oZ^k U^\ell}}{j! k! \ell !}\,z^j \oz^k u^\ell,\\
v^4&=\ba\, z^3\oz+\overline\ba\, z\oz^3+\bb\, z^2\oz^2+\frac{V^4_{Z^2\oZ U_1}}{2}\,z^2 \oz u_1+\frac{V^4_{Z\oZ^2 U_1}}{2}\,z \oz^2 u_1+\frac{V^4_{Z^2\oZ U_2}}{2}\,(z^2 \oz u_2- z \oz^2 u_2)\\
& \ \ \ \ \ \ \ \ \ \ \ \ \ \ +\sum_{j+k+|\ell|\geq 5}\,\frac{V^4_{Z^j\oZ^k U^\ell}}{j! k! \ell !}\,z^j \oz^k u^\ell,
\endaligned
\end{equation}
where its Taylor coefficients are subject to the following {\sl general normal form constraints} --- here, $\delta$ denotes the Kronecker symbol
\[
\aligned
V^r_{Z\oZ U^\ell} = V^r_{Z^{j+1}U^\ell} = V^r_{U^\ell} = V^2_{Z^2\oZ U^\ell} = V^3_{Z^2\oZ U_2^j U_3^k U_4^l} =  V^1_{Z^{j+2}\oZ U^\ell}
= V^1_{Z^2\oZ{}^2 U_2^j U_4^l}=V^1_{Z^2\oZ^2 U_2 U_3^k U_4^l}=
\\
 V^2_{Z^3\oZ U^\ell}= V^2_{Z^2\oZ{}^2 U_2^j U_3^k U_4^l}
=V^3_{Z^2\oZ U_1U_2^j U_3^k U_4^l}
= V^3_{Z^2\oZ{}^2U_3^k U_4^l}
=V^4_{Z^2\oZ U_3^{k} U_4^l} = \re V^4_{Z^2\oZ U_2^{j+1} U_3^k U_4^l}
 = 0,
\endaligned
\]
for $j,k,l\in\mathbb N_0:=\mathbb N\cup\{0\}$, $\ell\in\mathbb N_0^4$ and $r=1,\ldots, 4$. Additionally, in both branches {\bf A$^\prime$} and {\bf A$^{\prime\prime}$}, the normal form enjoy the supplementary constraints
\[
\ba=\frac{\i}{6}, \qquad \im V^2_{Z^2\oZ^3 U_4^l}=V^3_{Z^3\oZ U_4^l}=V^3_{Z^3\oZ U_1 U_4^l}=\re V^3_{Z^3\oZ U_2 U_4^l}=V^4_{Z^3\oZ U_3^k U_4^l}=0,
\]
together with $\im V^4_{Z^2\oZ U_2 U_4^l}=0$ in Branch {\bf A$^\prime$} and $\im V^2_{Z^4\oZ U_4^l}=0$ in Branch {\bf A$^{\prime\prime}$}, respectively. For Branch {\bf B}, the supplementary normal form constraints read
\[
\aligned
\bb=\frac{1}{4}, \qquad & V^2_{Z^2\oZ^2 U_1 U_3^{k}}=V^3_{Z^2\oZ^2 U_1 U_4^l}=V^4_{Z^2\oZ^2 U_3^{k}}=V^4_{Z^2\oZ U_1U_3^{k} U_4^l}=V^4_{Z^2\oZ^2 U_2 U_4^l}=V^4_{Z^2\oZ^2 U_1 U_4^l}=
\\
&V^4_{Z^2\oZ^2 U_4^l}=\im V^4_{Z^2\oZ U_2 U_4^l}=0.
\endaligned
\]
\end{Theorem}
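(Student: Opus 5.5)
We follow the equivariant moving frame approach in the spirit of Olver--Valiquette, as the abstract announces. We work in the jet space of the defining functions $v^r=V^r(z,\oz,u)$. We consider the pseudogroup $\mathcal G$ of local biholomorphisms of $\mathbb C^5$ fixing the origin and preserving the preliminary form \eqref{def-eq}. Its Maurer--Cartan forms $\mu^r_{Z^jW^\ell}$ are obtained by differentiating the group parameters. Using the recurrence formulae, we express the lifted differentials $d\hat V^r_{Z^j\oZ^kU^\ell}$ in terms of these Maurer--Cartan forms. The key object is the \emph{weighted truncation} of these formulae: modulo forms of higher weight, the coefficient of weight $t$ depends on the Maurer--Cartan forms of weight $t$ only through a linear map $L_t$. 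We compute $L_t$ explicitly using the model $M(\ba,\bb)$ of \eqref{def-eq-model}, whose infinitesimal automorphisms are known from \cite{Beloshapka2006, Sab-SCM}.

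\textbf{Step 1: general constraints.} For each weight we choose a complement to the image of $L_t$ and normalize the corresponding lifted invariants to zero. This yields the general constraints listed in Theorem~\ref{main-result}. Among the listed coefficients, $V^r_{U^\ell}$, $V^r_{Z^{j+1}U^\ell}$ and $V^r_{Z\oZ U^\ell}$ kill the translation and harmonic parts. The coefficients $V^1_{Z^{j+2}\oZ U^\ell}$, $V^2_{Z^2\oZ U^\ell}$, $V^3_{Z^2\oZ U_2^jU_3^kU_4^l},\dots$ absorb the dependence of $w$ and $z$ on the higher $u$-variables. These normalizations are solvable because $L_t$ has maximal rank on the corresponding Maurer--Cartan components. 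This is checked weight by weight, with the free-algebra structure of the Tanaka symbol guaranteeing the needed surjectivity for large weight. After this stage the remaining freedom is a finite-dimensional group: the stabilizer of the model, generated by dilations and rotations $z\mapsto \lambda z$ together with finitely many low-weight parameters.

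\textbf{Step 2: branching.} The remaining finite isotropy acts on $\ba,\bb$ by $\ba\mapsto \lambda^2\bar\lambda^{-2}|\lambda|^{-4}\ba$-type weights (up to normalization of $u_4$). We split into three branches according to whether $\ba$ and $\bb$ vanish.
\begin{itemize}
\item If $\ba\neq0$, we fix $\ba=\i/6$, which spends the rotation and scaling parameter. In Branch A$'$ the remaining real parameters are used to kill $\im V^4_{Z^2\oZ U_2U_4^l}$. In Branch A$''$ they are used to kill $\im V^2_{Z^4\oZ U_4^l}$.
\item If $\ba=0$, we fix $\bb=1/4$ by scaling. The leftover low-weight parameters then normalize the listed $Z^2\oZ^2$ coefficients.
\end{itemize}
The further sub-branches of Diagram~\ref{branches} arise as follows. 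When a coefficient needed for normalization happens to vanish identically, we cannot normalize with it. We then branch and pass to the next nonzero invariant, recording the resulting isotropy (trivial, one- or two-dimensional) from the stabilizer. Completeness follows because at each stage only finitely many group parameters remain unnormalized.

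\textbf{Step 3: convergence.} The normalization equations form an analytic PDE system for the normalizing biholomorphism. We recast it, in each branch, as a system of complete (finite-type) form, following the method of involution. By the recurrence formulae, all Maurer--Cartan forms beyond a finite order are expressed in terms of lower ones, so the Cartan test shows the lifted coframe system is involutive with zero Cartan characters beyond order zero. The Cartan--K\"ahler theorem then provides an analytic solution, and uniqueness of the formal normal form transformation identifies the formal normalizer with this analytic one.

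The main obstacle is Step 1: verifying, uniformly in all weights, that the truncated linear maps $L_t$ have exactly the cokernel described by the stated constraints. This requires a careful weight-by-weight structure argument, for which we would use the graded decomposition by $(j,k)$ and an induction on $|\ell|$, rather than case computation.
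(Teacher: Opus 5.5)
Your proposal has genuine gaps, in the normalization step and in the convergence step. First, Step 1 misdescribes what the branch-independent constraints achieve. Those constraints are exactly the partial cross-section \eqref{partial-cross-sec}. After imposing them, the unnormalized Maurer--Cartan forms are still the infinite families \eqref{MC-3}: $\im\mu_{U_3^{k+1}U_4^l}$, $\mu_{U_4^l}$, $\alpha^1_{U_4^l}$, $\alpha^2_{U_3^kU_4^l}$, $\alpha^3_{U_3^kU_4^l}$, $\alpha^4_{U_4^l}$. So the remaining freedom is not a finite-dimensional stabilizer, and Step 2 cannot work as written. No finite set of ``remaining real parameters'' can kill the infinitely many coefficients $\im V^4_{Z^2\oZ U_2U_4^l}$, $l\geq 0$; the paper uses the infinite family $\alpha^1_{U_4^{l+1}}$ for this.

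More seriously, the split at order four must be made \emph{before} these infinite families are normalized, because which invariants can absorb them depends on $\ba$. In Branch \textbf{A}, the forms $\alpha^2_{U_3^{k+1}U_4^l}$ and $\alpha^3_{U_3^{k+1}U_4^l}$ are normalized through $V^4_{Z^3\oZ U_3^kU_4^l}$. This relies on the factor $V^4_{Z^3\oZ}=6\ba$ in $\dt V^4_{Z^3\oZ}\equiv V^4_{Z^3\oZ}(2\i\alpha^2_{U_3}-\tfrac43\alpha^3_{U_3}+\alpha^4_{U_4})$ and its prolongations. In Branch \textbf{B} that factor vanishes, and one must use $V^4_{Z^2\oZ^2U_3^k}$, $V^4_{Z^2\oZ U_1U_3^kU_4^l}$ and $V^2_{Z^2\oZ^2U_1U_3^k}$ instead. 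Likewise, the coefficient $\bb^2$ of $\alpha^1_{U_4}$ in $\dt(\im V^4_{Z^2\oZ U_2})$ vanishes at $\bb=0$, and this is what produces the different constraints in \textbf{A}$'$ and \textbf{A}$''$.

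Hence the free Tanaka symbol furnishes no uniform complement to your $L_t$ ``for large weight''. The complements differ in infinitely many places. The rank verifications you defer as ``the main obstacle'' are the substance of the proof. The paper supplies them branch by branch, isolating the leading Maurer--Cartan term in each recurrence relation via the prolongation formula and the determining equations (e.g.\ $\phi^r_{z^ju^\ell}=-\i\eta^r_{z^ju^\ell}$).

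Second, Step 3 is not a convergence proof. A complete moving frame gives uniqueness of the formal normalizer up to a finite-dimensional group, but not convergence. The normal form conditions are infinitely many constraints on Taylor coefficients at a single point. They are not a finite-type PDE system for the biholomorphism, whose pseudo-group is infinite-dimensional. Whether the resulting series converges depends on the particular cross-section chosen.

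The missing idea is the criterion of \cite{OSV-preprint}, which runs as follows:
\begin{enumerate}
\item Pass to the coordinates $(z,\oz,w,\ow)$, in which the Cauchy--Riemann determining system $Z_{\oz}=Z_{\ow^j}=W^k_{\oz}=W^k_{\ow^j}=0$ is involutive (Cartan's test $\sum_j j\,{\sf b}^{(j)}=200=r_2$).
\item Rewrite $M$ as $\ow^\kappa=\ow^\kappa(z,\oz,w)$ and translate the minimal cross-section into conditions on the jets $\ow^\kappa_J$.
\item Exhibit a Rees decomposition of the index set ${\bf I}^{(6)}$ into finitely many Pommaret cones.
\end{enumerate}
This decomposition is what makes the normalization conditions admissible initial data for the Cartan--K\"ahler theorem. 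The criterion also requires trivial isotropy. The three branches with positive-dimensional isotropy must therefore be treated separately (they consist of the polynomial Beloshapka models), and your argument does not address this.
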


The main result, stated above, is obtained upon completion of computations in the major branches {\bf A$^\prime$}--{\bf A$^{\prime\prime}$} and {\bf B}. The normal form transformations leading to the above complete normal forms are unique upon a $1$-dimensional family of normalization choices in Branches {\bf A$^\prime$} and {\bf A$^{\prime\prime}$}, whereas in Branch {\bf B}, this freedom is of dimension $2$. Although completeness of the normal forms is already attained in these major branches, our next objective in the subsequent subbranches of Diagram \ref{branches} is to determine whether the finite number of remaining free parameters of the holomorphic group action can be further normalized. In fact, the ultimate goal is to achieve the {\it maximal possible normalization} of these group parameters.

In addition to the above result, we also realize in Branches {\bf A$^\prime$} and {\bf A$^{\prime\prime}$} that the dimension of the holomorphic isotropy group is $\leq 1$ with the maximal dimension, that is one, achieved by the normalized Beloshapka's models $M(\frac{\i}{6}, \bb)$ in \eqref{def-eq-model} with $\bb\geq 0$. This isotropy group is generated by the {\it dilations}
\begin{equation}\label{dilation}
Z = e^tz,\quad W_1=e^{2t}w_1,\quad W_2=e^{3t}w_2,\quad W_3=e^{3t}w_3,\quad W_4=e^{4t}w_4,\qquad
t\in\mathbb R.
\end{equation}

In Branch {\bf B}, the dimension of the isotropy groups is $\leq 2$ with, again, the maximal dimension only attained by the holomorphically unique model surface  $M(0,\frac{1}{4})$ of this branch. The isotropy group of this model is generate by the dilation transformations \eqref{dilation} along with the {\it rotations}
 \begin{equation}\label{rotation}
\begin{gathered}
Z=e^{-{\rm i}\theta} z,\qquad W_1=w_1,\qquad W_4=w_4,\\
W_2 = \cos(\theta)\, w_2- \sin(\theta)\, w_3,\qquad
W_3 = \cos(\theta)\, w_3+ \sin(\theta)\, w_2,
\end{gathered}
\qquad \theta\in\mathbb R.
\end{equation}
In all cases, Beloshapka's models are the only surfaces with maximal CR symmetry algebra which confirms --- and slightly improves --- Proposition 12 of \cite{Beloshapka2004} in CR dimension one and codimension four.

In this paper, our principal tool employed for the construction of the desired normal forms is the modern and powerful theory of equivariant moving frames, developed by Peter Olver and his school \cite{Olver-Fels-99, Olver-Pohjanpelto-08, Olver-2005, Olver-2018}. This far-reaching reformulation of Cartan's classical moving frames, furnishes a {\it systematic} procedure which simultaneously solves equivalence problems --- represented in Cartan's sense by fundamental invariants and structure equations \cite{Cartan-1935} --- and constructs their underlying normal forms in the sense of Moser \cite{Chern-Moser}. Indeed, it establishes a concrete and illustrative bridge between Cartan's theory of equivalences and Moser's theory of normal forms. In contrast to the classical methods (see e.g. \cite{BES, Chern-Moser}), the normal form construction proposed by the equivariant moving frame theory is purely {\it symbolic}, involving only linear algebra without requiring to explicit computation of the emerging invariants. We refer the reader to Section \ref{Sec-prel} for more details and explanations.

As stated in Theorem \ref{main-result}, our normal  forms \eqref{NF-complete} are also convergent. Historically, establishing the convergence of geometric normal forms under the action of pseudo-groups is a substantial challenge which requires foundational and technically involved arguments. The most well-known approach to this problem originates in the classical and celebrated work of Chern and Moser \cite{Chern-Moser}, who  introduced a distinguished class of curves, referred to as {\it chains}, for analyzing the convergence issue (see also \cite{Kossovskiy-Zaitsev-19, Makhmali-26}). In contrast, we have obtained more recently in \cite{OSV-preprint} a new criteria for the convergence of normal forms, based on the modern algebraic-differential theory of {\it involution} and {\it Pommaret bases} \cite{Seiler}. In the present paper, we apply the results from \cite{OSV-preprint} to prove the desired convergence of our normal forms.

The outline of this paper is as follows. In Section \ref{Sec-prel} we provide the necessary background to construct normal forms using the equivariant moving frame method.  By exploiting the recurrence relations, our computations are performed symbolically without relying on the coordinate expressions of the differential invariants or the moving frames. In Section \ref{sec-normal-form}, we begin the normal form construction. Preliminary normalizations are done up to order four, at which point the problem splits into the three branches {\bf A$^\prime$}, {\bf A}$^{\prime\prime}$ and {\bf B}. Moreover, we will see in this section how the total nondegeneracy assumption is recast within the equivariant moving frame setting. Throughout the two sections \ref{sec-Branch-A} and \ref{sec-Branch-B}, we construct the normal forms for each subbranches appearing in Diagram \ref{branches}. Section \ref{sec-convergence} is devoted to the proof of the convergence of the constructed normal forms by applying our recent involution criteria \cite{OSV-preprint}. As required by the main results of this paper, we will restrict our normal forms to be {\it minimal} which, intuitively, means that each normalization is carried out at the lowest possible order.

\section{Preliminaries}
\label{Sec-prel}

This section provides the necessary tools for computing normal forms via the equivariant moving frame method.  We refer the reader to the foundational papers \cite{Olver-Fels-99,Olver-Pohjanpelto-05,Olver-Pohjanpelto-08} for more details.  First, we introduce the underlying pseudo-group action.

\subsection{The holomorphic pseudo-group}

Let $\mathcal G$ denotes the pseudo-group of local holomorphic automorphisms of $\mathbb C^5$ with local coordinates $z, w^1, w^2, w^3, w^4$. By definition, an invertible map $(z,w^1, w^2, w^3, w^4)\mapsto (Z, W^1, W^2, W^3, W^4)$ belongs to $\mathcal G$ provided
\begin{equation*}
Z_{\oz} = Z_{\ow^r} = W^r_{\oz} = W^r_{\ow^s} = 0\qquad \text{with}\qquad r,s = 1,\ldots,4.
\end{equation*}
Expressing $w^r = u_r + \i v^r$ into its real and imaginary parts and letting $u = (u_1,\ldots,u_4)$, $v=(v^1,\ldots,v^4)$, the transformed variables $W^r$ split into their real and imaginary parts
\begin{equation*}
W^r(z,u,v) = U_r(z,\oz,u,v) + \i V^r(z,\oz,u,v),\qquad r = 1,\ldots,4.
\end{equation*}
In the following, we let $\oZ = \overline{Z(z,u,v)}$ and $\oW^j=\overline{W^j(z,u,v)}$ denote the complex conjugate of the functions.  Following the same argument as in \cite[$\S$3]{Sab-JGA}, an invertible map $(z, w)\mapsto (Z, W)$ belongs to the Lie pseudo-group $\mathcal G$ provided it satisfies the determining Cauchy-Riemann equations
\begin{equation}
\label{eq: determining equations}
\begin{gathered}
Z_{\oz} = \oZ_z = 0, \qquad
Z_{v^r} = \i Z_{u_r}, \qquad
\oZ_{v^r} = -\i\oZ_{u_r},
\\
\frac{\partial U_r}{\partial z} = \i V^r_z,\qquad
\frac{\partial U_r}{\partial \oz}=-\i V^r_{\oz}, \qquad
\frac{\partial U_r}{\partial v^s}=-V^r_{u_s},\qquad
V^r_{v^s} = \frac{\partial U_r}{\partial u_s},
\end{gathered}
\end{equation}
for $r, s= 1,\ldots,4$.  At the infinitesimal level, we introduce the vector field
\begin{equation}\label{eq: v}
\vv = \xi(z,u,v)\pp{}{z} + \overline{\xi}(\oz,u,v)\pp{}{\oz} + \sum_{r=1}^4\bigg( \eta^r(z,\oz,u,v)\pp{}{u_r} + \phi^r(z,\oz,u,v)\pp{}{v^r}\bigg),
\end{equation}
where $\overline{\xi}(\oz,u,v)= \overline{\xi(z,u,v)}$, $\eta^r = \frac{1}{2}(\zeta^r+\overline{\zeta}{}^r)$ and $\phi^r = \frac{{\rm i}}{2}(\overline{\zeta}{}^r-\zeta^r)$ for some holomorphic functions $\zeta^1(z,w)$, $\ldots$, $\zeta^{4}(z,w)$. Linearizing the determining equations \eqref{eq: determining equations} at the identity transformation (cf. \cite{Olver-1995, Olver-Pohjanpelto-05}), the vector field \eqref{eq: v} belongs to the (local) Lie algebra $\mathfrak{g}=\mathfrak{hol}(\mathbb C^5)$ if and only if its vector components satisfy, up to order two, the \emph{infinitesimal determining equations}
\begin{equation}\label{eq: infinitesimal determining equations}
\aligned
\xi_{\oz}&=\oxi_z=0,\qquad \xi_{v^r} = \i\xi_{u_r},\qquad \oxi_{v^r}=-\i\oxi_{u_r},
 \\
\phi^r_z &= -\i\eta^r_z, \qquad \phi^r_{\oz} = \i\eta^r_{\oz}, \qquad \phi^r_{u_s} = - \eta^r_{v^s}, \qquad \phi^r_{v^s}=\eta^r_{u_s},
\\
\xi_{\oz,x} &= 0,\qquad \xi_{zv^r} = \i\xi_{zu_r},\qquad \xi_{u_rv^s} = \i\xi_{u_r u_s},\qquad \xi_{v^rv^s} = -\xi_{u_r u_s},
\\
\oxi_{z,x} &= 0, \qquad  \oxi_{\oz v^r} = -\i\oxi_{\oz u_r},\qquad \oxi_{u_rv^s} = -\i\oxi_{u_r u_s},\qquad \oxi_{v^r v^s} = -\oxi_{u_r u_s},
\\
\eta^r_{z\oz} &=0, \qquad \eta^r_{v^s v^t} =-\eta^r_{u_s u_t}, \qquad \eta^r_{zv^s}=\i \eta^r_{zu_s},\qquad
\eta^r_{\oz v^s} = -\i \eta^r_{\oz u_s},
\\
\phi^r_{z,x} &= -\i\eta^r_{z,x}, \qquad \phi^r_{\oz,x} = \i \eta^r_{\oz,x},\qquad \phi^r_{u_s,x} = -\eta^r_{v^s,x},\qquad
\phi^r_{v^s,x} = \eta^r_{u_s,x},
\endaligned
\end{equation}
where $r, s, t=1,2,3,4$, and $x\in\{z,\oz,u,v\}$.  Higher order infinitesimal determining equations are obtained by successive differentiations of the above equations.

Let $\bmu^{(\infty)} = (\mu^z_A, \mu^{\oz}_A, \mu^{u_1}_A,\ldots,\mu^{u_4}_A, \mu^{v^1}_A,\ldots,\mu^{v^4}_A)$ denote the Maurer--Cartan forms of the holomorphic pseudo-group $\G$ where, for $A=(j,k, \ell, \ell')$ with $j,k\in \mathbb{N}_0:=\mathbb{N}\cup \{0\}$, $\ell=(\ell_1, \ldots, \ell_4),\ell'=(\ell'_1, \ldots, \ell'_4) \in \mathbb{N}_0^4$, we denote
\[
\mu^x_A = \mu^x_{Z^j \oZ{}^k U^\ell V^{\ell'}}, \qquad x=z, \oz, u, v,
\]
where $U^\ell:=U_1^{\ell_1}\ldots U_4^{\ell_4}$ and $V^{\ell'}:={V^1}^{\ell'_1}\ldots {V^4}^{\ell'_4}$.
Coordinate expressions for the Maurer--Cartan forms are introduced in \cite{Olver-Pohjanpelto-05}, but they will not be necessary for the construction of normal forms as we view them just {\it symbolically} in this paper. For convenience, we hereafter let
\begin{equation*}
\mu = \mu^z, \qquad \overline\mu = \mu^{\oz}, \qquad \alpha^r = \mu^{u_r}, \qquad \gamma^r = \mu^{v^r}, \qquad r=1,\ldots,4.
\end{equation*}
As shown in \cite{Olver-Pohjanpelto-05}, the Maurer--Cartan forms of the holomorphic pseudo-group $\G$ satisfy the same equations as the infinitesimal determining equations \eqref{eq: infinitesimal determining equations} under the correspondence
\[
z \to Z,\quad \oz \to \oZ,\quad u_r\to U_r,\quad v^r\to V^r,\quad\xi_A\to \mu_A,\quad
\oxi_A\to \omu_A,\quad \eta^r_A\to \alpha^r_A,\quad \phi^r_A\to \gamma^r_A.
\]
Thus we have the linear dependencies
\begin{equation}\label{MC-relations}
\aligned
\mu_{\oZ} &= \omu_Z = 0,\qquad \mu_{V^r}=\i\mu_{U_r},\qquad \omu_{V^r} = -\i\omu_{U_r},
\\
\gamma^r_Z &= -\i \alpha^r_Z, \qquad \gamma^r_{\oZ}=\i\alpha^r_{\oZ},\qquad
\gamma^r_{U_s} = - \alpha^r_{V^s},\qquad \gamma^r_{V^s} = \alpha^r_{U_s},
\\
\mu_{\oZ,X} &= 0,\qquad \mu_{ZV^r} = \i\mu_{ZU_r},\qquad \mu_{U_rV^s} = \i\mu_{U_rU_s},\qquad \mu_{V^rV^s} =-\mu_{U_rU_s},
\\
\omu_{Z,X} &= 0,\qquad \omu_{\oZ V^r} = -\i\omu_{\oZ U_r},\qquad \omu_{U_rV^s}=-\i\omu_{U_rU_s},\qquad \omu_{V^rV^s} = -\omu_{U_rU_s},
\\
\alpha^r_{Z\oZ}&=0,\qquad \alpha^r_{V^sV^t} = -\alpha^r_{U_sU_t},\qquad \alpha^r_{ZV^s} = \i \alpha^r_{ZU_s},\qquad \alpha^r_{\oZ V^s} = -\i \alpha^r_{\oZ U_s},
\\
\gamma^r_{Z,X} &= -\i\alpha^r_{Z,X},\qquad \gamma^r_{\oZ,X} = \i \alpha^r_{\oZ,X},\qquad
\gamma^r_{U_s,X} = - \alpha^r_{V^s,X},\qquad \gamma^r_{V^s,X} = \alpha^r_{U_s,X},
\endaligned
\end{equation}
for each $r,s,t=1,\ldots,4$, and $X\in\{Z, \oZ, U, V\}$. Therefore, a {\it basis} of our Maurer--Cartan forms is given by
\begin{equation}
\label{Basis-MC-original}
\mu_{Z^jU^\ell},\qquad \omu_{\oZ{}^j U^\ell}, \qquad \alpha^r_{Z^j U^\ell},\qquad \alpha^r_{\oZ{}^j U^\ell},\qquad \gamma^r_{U^\ell},
\end{equation}
where $j\in\mathbb N_0$, $\ell \in\mathbb N^4_0$ and $r =1, \ldots, 4$.

\subsection{Equivariant moving frames}

The equivariant moving frame theory, \cite{Olver-Fels-99,Olver-Pohjanpelto-05}, is a modern reformulation of Cartan's classical method of moving frames, \cite{Cartan-1935, Olver-1995}.  Of the most important results from this new perspective on moving frames is the introduction of the recurrence relations that characterize the algebra of differential invariants.  A key feature of these equations is that they can be derived without necessitating explicit formulas for either the moving frame or the invariants.  This has led to the introduction of what is called {\it symbolic invariant calculus}, \cite{M-2010}.   The coordinate implementation of the equivariant moving frame method can be found in the foundational papers \cite{Olver-Fels-99, Olver-Pohjanpelto-05}.  In this paper all computations are performed symbolically by exploiting the recurrence relations.

For $0\leq n \leq \infty$, consider the $n$-th order jet space ${\rm J}\n:={\rm J}\n(\mathbb C^5\simeq \mathbb R^{10}, 6)$ of $6$-dimensional real submanifolds in $\mathbb C^5$. Let $z, \oz, u=(u_1, u_2, u_3, u_4)$ denote the independent variables and $v=(v^1, v^2, v^3, v^4)$ the dependent ones.  Then, the submanifold jet coordinates are given by $\bz\n = (z, \oz, u, v\n)$, where $v\n =(\ldots\, v^r_{z^j \oz{}^k u^\ell}\,\ldots)$ collects the derivative coordinates of order $\leq n$ with $j, k\in \mathbb{N}_0$, $\ell \in \mathbb{N}_0^4$, and $r=1, \ldots, 4$.  The submanifold jet $\bz\ii$ at the base point $(z,\oz,u)$ provides the coefficients of the Taylor series expansion of the $6$-dimensional totally nondegenerate CR submanifolds $M \subset \mathbb C^5$ about $(z,\oz,u)$:
\[
v^r(z^*,\oz^*,u^*) = \sum_{j,k,\ell} \frac{v^r_{z^j \oz{}^k u^\ell}}{j! k! \ell!} (z^*-z)^j (\oz^*-\oz)^k (u^*-u)^\ell,\qquad r=1,\ldots,4.
\]

The biholomorphic pseudo-group $\G$ of $\mathbb{C}^5$ induces a prolonged action on ${\rm J}\n$
\begin{equation}\label{lifted invariants}
\bZ\n = (Z,\oZ,U,\,\ldots V^r_{Z^j\oZ{}^k U^\ell},\,\ldots) = \jet_n\varphi|_{\bz}\cdot \bz\n,\qquad \text{where}\qquad \varphi \in \G.
\end{equation}
In the following we use the multi-index notation $V^r_J$ to denote $V^r_{Z^j \oZ{}^k U^\ell}$ with $J=(j,k,\ell)\in \mathbb{N}_0^6$. Assuming $\ell = (l_1, \ldots, l_4)$, we also denote the {\it order} of $J$ by $|J|:=j+k+l_1+\ldots+l_4$.

The submanifold jet $\bz\n \in {\rm J}^{(n)}$ and the pseudo-group jet $\jet_n\varphi|_{\bz} \in \G\n$, with the source $\pi(\bz\n) = \bz=(z,\oz,u,v)$, locally parametrize the $n$-th order lifted bundle $\B\n$.  This bundle admits a groupoid structure\\
\begin{center}
\begin{tikzcd}
& \B\n \arrow[ld, "\bm{\sigma}\n" '] \arrow[rd, "\bm{\tau}\n"] & \\
{\rm J}^{(n)} & & {\rm J}^{(n)}
\end{tikzcd}
\end{center}
 where the source map $\bm{\sigma}\n(\bz\n,\jet_n\varphi|_{\bz}) = \bz\n$ is the projection onto the first component, and the target map is the prolonged action $\bm{\tau}\n(\bz\n,\jet_n\varphi|_{\bz}) = \bZ\n$.  There is a natural \emph{right action} of the holomorphic pseudo-group $\G$ on $\B\n$ given by combining the prolonged action \eqref{lifted invariants} together with the right composition of pseudo-group jets
\begin{equation}\label{eq: right action}
R_\psi(\bz\n,\jet_n\varphi|_{\bz}) = (\jet_n\psi|_{\bz}\cdot \bz\n,\jet_n(\varphi\circ \psi^{-1})|_{\psi(\bz)})\quad\text{for any}\quad \varphi,\,\psi\, \in\, \G
\end{equation}
where the composition $\varphi\circ \psi^{-1}$ is defined.  We note that the prolonged action \eqref{lifted invariants} is invariant under the right action \eqref{eq: right action}.  Thus the components of $\bZ\n$ are called {\it lifted invariants}.  The lifted invariants at the point $(Z, \oZ,U)$ provide the coefficients of the Taylor series expansion of the transformed submanifold $\varphi(M) \subset \mathbb{C}^5$:
\begin{equation}\label{normal form}
V^r(Z^*,\oZ{}^*,U^*) = \sum_{j,k,\ell} \frac{V^r_{Z^j \oZ{}^k U^\ell}}{j! k! \ell!} (Z^*-Z)^j (\oZ{}^*-\oZ)^k (U^*-U)^\ell,\qquad r=1,\ldots,4.
\end{equation}

\begin{Definition}
A \emph{(partial) right moving frame} of  order $n$ is a right-invariant local subbundle $\hB\n\subset \B\n$, meaning that $R_\psi(\hB\n) \subset \hB\n$ for all $\psi \in \G$ where the right action \eqref{eq: right action} is defined. If the subbundle $\hB\n$ forms the graph of a right-invariant section of $\B\n$, it defines an equivariant moving frame.
\end{Definition}

Let us denote by $\hrho\n\colon \hB\n \hookrightarrow \B\n$ the inclusion map induced by a (partial) right moving frame.  The construction of a (partial) moving frame depends on the choice of an appropriate cross-section to the prolonged pseudo-group orbits.  Assume, as it is the case with most applications, that the cross-section $\mathcal K$ is a coordinate cross-section specified by setting certain submanifold jet coordinates to constant values
\[
z = a,\qquad \oz = \overline{a},\qquad u = b,\qquad v^r_{z^j \oz{}^k u^\ell} = c^r_{j, k, \ell}.
\]
A (partial) moving frame is then obtained by solving the corresponding normalization equations
\begin{equation}\label{norm eq}
Z = a,\qquad \oZ = \overline{a},\qquad U = b,\qquad V^r_{Z^j \oZ{}^k U^\ell} = c^r_{j,k,\ell},
\end{equation}
for the pseudo-group jets.  If all the pseudo-group jets of order $\leq n$ can be solved in terms of the submanifold jet $\bz\n$, then one obtains a $n$-th order moving frame.  Otherwise, if certain pseudo-group jets remain unsolved for, then one obtains a {\it partial} moving frame, \cite{Valiquette-SIGMA}.  We note that the normalization equations \eqref{norm eq} specify the center of the Taylor series expansion in \eqref{normal form}, and the Taylor coefficients normalized to constant values which are referred as \emph{phantom invariants}.  The resulting Taylor series is called the \emph{normal form power series expansion} of the submanifold.

 Once a (partial) moving frame is constructed, it induces an {\it invariantization} projection that maps differential functions and, more generally, differential forms to their invariant counterparts on $\hB\n$.  Introducing the projection map $\pi_{\bm{\mu}}\colon \Omega^*(\hB\ii) \to \Omega^*(\hB\ii)$, which sets the Maurer--Cartan forms $\mu_A,
\omu_A,\alpha^j_A, \gamma^j_A$ to zero, the invariantization of a differential form $\Omega$ in  ${\rm J}\n$ is the invariant jet form
\[
\iota\n(\Omega ) = (\hrho\n)^* [\pi_{\bm{\mu}}((\bm{\tau}\n)^* \Omega )]
\]
defined on $\hB\n$. In particular, the invariantization of the submanifold jet coordinates yields the invariants $\iota\n(\bz\n) = (\hrho\n)^*(\bZ\n)$. To simplify the notation, we will not explicitly write the moving frame pull-back in the computations to come.  The context should make it clear that, for example, $V^r_J$ stands for $(\hrho\n)^*(V^r_J)$.  Also, we let $\iota=\iota\ii$.

Contact forms play an important role in the calculus of variations, \cite{KO-2003}, but they are not necessary for the problem considered in this paper.  Thus all computations in this paper are performed modulo contact forms.

\subsection{Recurrence formula}
\label{subsec-rec-rel}

We now introduce the {\it universal recurrence formula}, which computes the exterior derivative of an invariantized differential form, {\it symbolically}. First recall that the prolongation of the vector field $\bf v$ in \eqref{eq: v} is given by
\begin{equation}
\label{v-infty}
\vv^{(\infty)} = \xi \frac{\partial}{\partial z} + \oxi \frac{\partial}{\partial \oz} + \sum_{r=1}^4 \bigg(\eta^r \frac{\partial}{\partial u_r} + \sum_{J} \phi^{r;J}\frac{\partial}{\partial v^r_J}\bigg),
\end{equation}
where the vector components $\phi^{r;J}$ are defined recursively by the prolongation formula
\begin{subequations}
\label{prolong-formula}
\begin{equation}\label{eq: prolongation formula}
\phi^{r;J} = \Dt_J\bigg(\phi^r - \xi\, v^r_z - \oxi\, v^r_{\oz}  - \sum_{s=1}^4 \eta^s\, v^r_{u_s}\bigg) + \xi\, v^r_{J, z}  + \oxi\, v^r_{J, \oz}  + \sum_{s=1}^4 \eta^s\, v^r_{J, u_s},
\end{equation}
or equivalently
\begin{equation}\label{eq: prolongation formula 2}
\phi^{r; J,\sf x}=\Dt_{\sf x}\phi^{r;J}-(\Dt_{\sf x}\xi) \,v^r_{J,z}-(\Dt_{\sf x}\overline\xi) \,v^r_{J, \oz}- \sum_{s=1}^4 (\Dt_{\sf x}\eta^s)\, v^r_{J, u_s},
\end{equation}
\end{subequations}
where ${\sf x} \in \{z, \oz, u\}$ and $\Dt_{\sf x}$ denotes the total derivative operator with respect to the variable $\sf x$ and $\Dt_J = \Dt_z^j\Dt_{\oz}^k \Dt_{u_1}^{\ell_1}\cdots \Dt_{u_4}^{\ell_4}$ with $J=(j,k,\ell_1,\ldots,\ell_4) \in \mathbb{N}^6_0$.

\begin{Theorem} ({\rm cf. \cite[Theorem 25]{Olver-Pohjanpelto-08}})
{\it If $\Omega$ is a differential form on ${\rm J}^{(\infty)}$, then
\begin{equation}\label{eq: recurrence relation}
\dt [\iota(\Omega)]=\iota\big[\dt \Omega+{\bf v}^{(\infty)}(\Omega)\big],
\end{equation}
where ${\bf v}^{(\infty)}(\Omega)$ denotes the Lie derivative of $\Omega$ along ${\bf v}^{(\infty)}$.}
\end{Theorem}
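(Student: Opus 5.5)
The plan is to work on the infinite-order lifted bundle $\B\ii$ and to split its exterior derivative as $\dt = \dt_M + \dt_\G$. Here $\dt_M$ is the part along the submanifold jet directions (horizontal plus contact) and $\dt_\G$ is the part along the pseudo-group jet fibres. Formula \eqref{eq: recurrence relation} should then follow from one identity on $\B\ii$ plus a pull-back by the moving frame. In \eqref{eq: recurrence relation} I read $\iota[{\bf v}^{(\infty)}(\Omega)]$ as follows: the coefficients $\xi_A,\oxi_A,\eta^r_A,\phi^r_A$ of the prolonged vector field \eqref{v-infty} are replaced by the pulled-back Maurer--Cartan forms $(\hrho)^*\mu_A,(\hrho)^*\omu_A,(\hrho)^*\alpha^r_A,(\hrho)^*\gamma^r_A$, and these are \emph{not} set to zero. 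They are invariant one-forms on $\hB\ii$, expressible through the phantom normalizations \eqref{norm eq}. All computations are modulo contact forms, as agreed in the paper.

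The first step is the lifted identity
\[
\dt\big[(\bm\tau\ii)^*\Omega\big]=(\bm\tau\ii)^*(\dt\Omega)+\big[{\bf v}^{(\infty)}(\Omega)\big]_{\xi_A\to\mu_A,\ \oxi_A\to\omu_A,\ \eta^r_A\to\alpha^r_A,\ \phi^r_A\to\gamma^r_A}
\]
on $\B\ii$, where on the right the pulled-back target coordinates $\bZ\ii$ are inserted for the jet coordinates. The part of $\dt$ transverse to the fibres produces $\bm\tau^*(\dt\Omega)$, because $\bm\tau$ acts on the base jet coordinates by the prolonged action \eqref{lifted invariants}. For the fibre part, I would use the right-invariance of $\bm\tau$ under \eqref{eq: right action} together with the right-invariance of the Maurer--Cartan forms. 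This reduces the computation of $\dt_\G(\bm\tau^*\Omega)$ to the identity pseudo-group jet. There, an infinitesimal variation $\jet_\infty\varphi\mapsto\jet_\infty(\exp(\epsilon\vv)\circ\varphi)$ of the target moves $\bZ\ii$ along $\vv^{(\infty)}$, so $\dt_\G \bm\tau^*$ equals the Lie derivative along $\vv^{(\infty)}$. In that Lie derivative, the dual pairing between jets of the vector field and Maurer--Cartan forms turns each $\xi_A$ into $\mu_A$, and likewise for the other components. The prolongation formula \eqref{prolong-formula} guarantees that the coefficients $\phi^{r;J}$ are expressed linearly in those jets, so the substitution is well defined. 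The relations \eqref{MC-relations} are compatible with \eqref{eq: infinitesimal determining equations}, so the substitution respects the linear dependencies.

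The second step is to apply $(\hrho\ii)^*$, which commutes with $\dt$. By definition, $\iota(\Omega)=(\hrho)^*\pi_{\bm\mu}\bm\tau^*\Omega$. For $\Omega$ a form on ${\rm J}\ii$, the form $\bm\tau^*\Omega$ contains no Maurer--Cartan components modulo contact, so $\pi_{\bm\mu}$ acts trivially on it. Pulling back the lifted identity therefore gives $\dt\iota(\Omega)=\iota(\dt\Omega)+(\hrho)^*[\cdots]$, which is exactly \eqref{eq: recurrence relation} under the reading above.

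I expect the first step to be the main obstacle. The delicate points are the infinite-dimensional, groupoid nature of $\G$, where only the composition defined near the source makes sense, and the check that the fibre derivative of the lifted invariants reproduces precisely the prolongation coefficients \eqref{prolong-formula}, including the total-derivative corrections. I would handle this by induction on the order $|J|$ using \eqref{eq: prolongation formula 2}. The base case, orders zero and one, is a direct check on $Z,U,V$. For the inductive step I would differentiate the lifted identity for $V^r_J$ totally along $\mathsf x$, and use that the lifted total derivatives on $\B\ii$ are the $\bm\tau$-pushforwards of $\Dt_{\sf x}$.
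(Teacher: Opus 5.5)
Your proposal has a genuine gap: it rests on the claim, in your second step, that $(\bm\tau\ii)^*\Omega$ has no Maurer--Cartan components modulo contact forms, so that $\pi_{\bm\mu}$ acts trivially. This already fails for $\Omega=\dt z$. On $\B\ii$ one has $(\bm\tau\ii)^*(\dt z)=\dt Z=\omega^Z+\mu$ modulo contact, which is exactly the order-zero relation in \eqref{rec-formula}. Likewise $(\bm\tau\ii)^*(\dt v^r_J)=\dt V^r_J$ carries the group component $\phi^{r;J}$, with the vector field jets replaced by Maurer--Cartan forms.

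Consequently the lifted identity of your first step, with $(\bm\tau\ii)^*$ on both sides, is false. Since $\bm\tau\ii$ is a smooth map, $\dt\,(\bm\tau\ii)^*\Omega=(\bm\tau\ii)^*\dt\Omega$ holds exactly, so your identity would force the correction term to vanish. Your reading that the substituted Maurer--Cartan forms in $\iota[{\bf v}\ii(\Omega)]$ are kept is correct. But that correction term exists precisely because $\pi_{\bm\mu}$ discards the Maurer--Cartan components of $(\bm\tau\ii)^*\Omega$. An argument in which this projection is the identity cannot produce it.

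If you replace $(\bm\tau\ii)^*$ throughout by the lift $\lambda(\Omega):=\pi_{\bm\mu}[(\bm\tau\ii)^*\Omega]$, your identity becomes the correct lifted formula
\[ \dt\lambda(\Omega)=\lambda\big[\dt\Omega+{\bf v}\ii(\Omega)\big]. \]
Pulling this back by $\hrho\ii$, exactly as in your second step, gives \eqref{eq: recurrence relation}; this is the standard route to the Olver--Pohjanpelto result, which the paper quotes without proof. Both halves of your justification then need an ingredient you never invoke: the structure equations of the Maurer--Cartan forms.

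Split $\dt=\dt_J+\dt_{\G}$ by Maurer--Cartan degree.

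\emph{Transverse half.} To get $\dt_J\lambda(\Omega)=\lambda(\dt\Omega)$, write $(\bm\tau\ii)^*\Omega=\lambda(\Omega)+\rho$ with $\rho$ in the ideal generated by the Maurer--Cartan forms. Then $\dt(\bm\tau\ii)^*\Omega=(\bm\tau\ii)^*\dt\Omega$ yields the claim only if $\dt\rho$ stays in that ideal. This holds because the Maurer--Cartan forms span the contact forms on the pseudo-group jet bundle, and that ideal is differentially closed.

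\emph{Fibre half.} For $\dt_{\G}\lambda(\Omega)=\lambda[{\bf v}\ii(\Omega)]$, your infinitesimal left-composition argument, checked inductively through \eqref{eq: prolongation formula 2}, does handle functions. It gives $\dt_{\G}V^r_J=\phi^{r;J}$ with $\xi_A,\oxi_A,\eta^r_A,\phi^r_A$ replaced by Maurer--Cartan forms. For forms of positive degree, both sides are degree-one antiderivations along the homomorphism $\lambda$, so it suffices to treat $F$ and $\dt F$. But $\dt_{\G}\lambda(\dt F)=-\dt_J\dt_{\G}(\bm\tau\ii)^*F$. Matching this with $\lambda[{\bf v}\ii(\dt F)]$ requires the horizontal structure equations, which express $\dt_J\mu_A$ through terms $\omega^i\wedge\mu_{A,i}$.

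As written, your argument establishes the formula at most for functions.
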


To evaluate the correction term $\iota[{\bf v}^{(\infty)}(\Omega)]$ in the recurrence formula \eqref{eq: recurrence relation}, we extend the invariantization map to the vector field jets $\xi_A, \oxi_A, \eta^r_A,\phi^r_A$ by setting
\[
\iota(\xi_A) = (\hrho\ii)^*\mu_A,\qquad
\iota(\oxi_A) = (\hrho\ii)^*\omu_A,\qquad
\iota(\eta^r_A) = (\hrho\n)^*\alpha^r_A,\qquad
\iota(\phi^r_A) = (\hrho\n)^*\gamma^r_A,
\]
where $\mu_A, \omu_A, \alpha^r_A, \gamma^r_A$ are the Maurer--Cartan forms of the holomorphic pseudo-group $\G$. We denote, throughout this paper, the invariantization of the (horizontal) coframe by
\begin{equation}\label{eq: horizontal coframe}
\omega^Z = \iota(\dt z),\qquad
\omega^{\oZ} = \iota(\dt \oz),\qquad
\omega^r = \iota(\dt u_r),\qquad r=1,\ldots,4.
\end{equation}

Substituting for $\Omega$ in \eqref{eq: recurrence relation} the submanifold jet coordinates $\bz\n = (z, \oz, u, v\n)$, we obtain the recurrence relations
\begin{equation}\label{rec-formula}
\dt Z=\omega^Z+\mu, \qquad \dt \oZ=\omega^{\oZ}+\overline\mu,\qquad
\dt U_r=\omega^r+\alpha^r,\qquad
\dt V^r_J=\varpi^r_{J}+\iota(\phi^{r; J}),
\end{equation}
where
\[
\varpi^r_{J} =V^r_{J, Z}\,\omega^Z+V^r_{J, \oZ}\,\omega^{\oZ}+\sum_{s=1}^4V^r_{J, U_s}\,\omega^s,
\]
and $\iota(\phi^{r; J})$ is the invariantization of the prolonged vector field coefficient \eqref{eq: prolongation formula}.

A key observation that allows us to perform the computations symbolically is that the left hand side of the recurrence relations \eqref{rec-formula} for the phantom invariants \eqref{norm eq}, defining the normalization equations, is equal to zero.  These equations can then be symbolically solved for the normalized Maurer--Cartan forms \eqref{Basis-MC-original}.  Substituting the result in the remaining recurrence relations provides formulas for the exterior derivative of the (normalized) differential invariants.

\section{Total nondegeneracy and preliminary normalizations}
\label{sec-normal-form}

We now begin the construction of power series normal forms for six-dimensional totally nondegenerate CR manifolds $M\subset\mathbb C^5$.  To this end, we use order by order the recurrence relations \eqref{rec-formula} to determine which lifted invariants can be normalized. For brevity and thanks to the conjugation equality $V^r_{\overline J}=\overline{V^r_J}$, which follows from the fact that $M$ is a real surface, we will generally omit to write down the conjugate equations.

\subsection{Order zero}

At order zero, the recurrence relations are
\[
\dt Z = \omega^Z + \mu, \qquad
\dt \oZ = \omega^{\oZ} + \omu,\qquad
\dt U_r= \omega^r + \alpha^r, \qquad
\dt V^r = \varpi^r + \gamma^r,\qquad r=1,\ldots,4.
\]
If we set
\begin{equation}\label{order 0 normalizations}
Z = \oZ = U_r = V^r = 0,
\end{equation}
then the left hand side of the recurrence equations vanish, which allows us to solve them for the normalized Maurer--Cartan forms
\begin{equation*}
\mu=-\omega^Z, \qquad \omu = \omega^{\oZ},\qquad \alpha^r = -\omega^r, \qquad \gamma^r=-\varpi^r, \qquad r=1,\ldots,4.
\end{equation*}

\subsection{Order one}
The recurrence relations for the order one lifted invariants are
\begin{equation}
\label{eq: order 1 recurrence relations}
\dt V^r_Z = \varpi^r_Z -\i \alpha^r_Z + C^r_Z(V^{(1)},\bm{\mu}^{(1)}),\qquad
\dt V^r_{U_s} = \varpi^r_{U_s} + \gamma_{U_s}^r+ C^r_{U_s}(V^{(1)},\bm{\mu}^{(1)}),
\end{equation}
where  $C^r_Z$, $C^r_{U_s}$ are homogeneous linear functions in the first order Maurer--Cartan forms $\bm{\mu}^{(1)}$. By setting
\begin{equation*}
V^r_Z=V^r_{U_s}=0, \qquad r, s=1,\ldots,4,
\end{equation*}
the recurrence relations \eqref{eq: order 1 recurrence relations} become
\[
0 = \varpi^r_Z - \i \alpha^r_Z,\qquad 0 = \varpi^r_{U_s} + \gamma^r_{U_s},
\]
or equivalently
\[
\alpha^r_Z=-\i \varpi^r_Z, \qquad
 \text{and} \qquad \gamma^r_{U_s}= -\varpi^r_{U_s}.
\]
In light of the general prolongation formula \eqref{prolong-formula}, one obtains the following generalization of these already obtained normalizations.

\begin{Lemma}\label{lem-ord-1}
Let $j\in \mathbb{N}_0$, $\ell\in\mathbb N^4_0$ and $r = 1,\ldots,4$.  It is possible to normalize the Maurer--Cartan forms
 \renewcommand\labelenumi{\theenumi)}
\begin{enumerate}
 \item $\alpha^r_{Z^{j+1} U^\ell}$ by setting $V^r_{Z^{j+1} U^\ell}=0$,
 \item $\gamma^r_{U^\ell}$ by setting $V^r_{U^\ell}=0$.
\end{enumerate}
\end{Lemma}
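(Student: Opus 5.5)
The proof proceeds by induction on the order $|J|$, using the recurrence relations \eqref{rec-formula} together with the prolongation formula \eqref{eq: prolongation formula}. The guiding observation is that in $\phi^{r;J}$ the highest-order vector field jet enters linearly through the term $\Dt_J\phi^r$. By the infinitesimal determining equations \eqref{eq: infinitesimal determining equations}, its leading part can be rewritten in terms of the basis \eqref{Basis-MC-original}. For $J=(j+1,0,\ell)$ we get $\phi^r_{Z^{j+1}U^\ell}=-\i\,\eta^r_{Z^{j+1}U^\ell}$, using $\phi^r_{z,x}=-\i\eta^r_{z,x}$ and its differential consequences. For $J=(0,0,\ell)$ the leading part is $\phi^r_{U^\ell}$ itself. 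All other terms in $\phi^{r;J}$ are of two kinds:
\begin{itemize}
\item products of lower-order vector field jets with submanifold jets, or
\item terms $\xi v^r_{J,z}$ etc. that are cancelled by the total-derivative part.
\end{itemize}
After invariantization, therefore,
\[
\dt V^r_{Z^{j+1}U^\ell}=\varpi^r_{Z^{j+1}U^\ell}-\i\,\alpha^r_{Z^{j+1}U^\ell}+C^r_{Z^{j+1}U^\ell},\qquad
\dt V^r_{U^\ell}=\varpi^r_{U^\ell}+\gamma^r_{U^\ell}+C^r_{U^\ell},
\]
where the correction terms $C$ are linear in Maurer--Cartan forms of order strictly less than $|J|$, with coefficients polynomial in the lifted invariants.

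Next I proceed order by order, starting from the order zero and order one cases already established. Assume all $\alpha^r_{Z^{j'+1}U^{\ell'}}$ and $\gamma^r_{U^{\ell'}}$ of lower order have been normalized, together with the lower-order forms $\mu_A,\omu_A,\alpha^r_{\oZ^kU^\ell}$ normalized elsewhere. The correction terms $C$ are then expressible via the invariant horizontal coframe. Imposing $V^r_{Z^{j+1}U^\ell}=0$ or $V^r_{U^\ell}=0$ makes the left-hand side vanish, and the equation can be solved uniquely for $\alpha^r_{Z^{j+1}U^\ell}$, since the coefficient $-\i$ is invertible, or for $\gamma^r_{U^\ell}$. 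This solvability is exactly the transversality condition required for these normalizations to define a valid (partial) moving frame. Distinct multi-indices give distinct basis forms, so these normalizations are mutually independent. The conjugate forms $\alpha^r_{\oZ^{j+1}U^\ell}$ are handled by the conjugate equations.

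The main obstacle is the claim that the leading coefficient of the top-order Maurer--Cartan form is exactly $-\i$ (respectively $1$) and is not disturbed by other contributions. The danger is that some term such as $(\Dt_J\eta^s)\,v^r_{u_s}$, or the chain-rule contributions from $v$-derivatives of $\phi^r$, produces another copy of $\alpha^r_{Z^{j+1}U^\ell}$ of the same order. I will handle this by observing that every such product carries a factor $v^r_{z}$, $v^r_{u_s}$ or $v^r$. These factors are phantom invariants that were set to zero at orders zero and one, so they vanish after invariantization. The order-$|J|$ jets of $\xi$ and $\eta^s$ enter only multiplied by these vanishing first-order invariants. The $v$-derivatives $\phi^r_{v^s}$ enter multiplied by positive-order derivatives of $v^s$, and these are converted into $\alpha$'s of lower order via $\phi^r_{v^s,x}=\eta^r_{u_s,x}$. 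Consequently only the pure term $\phi^r_J$ survives at top order, which completes the induction.
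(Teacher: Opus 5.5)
Your argument is correct and follows the paper's proof. You isolate the leading term $\phi^r_J$ in the prolonged coefficient and rewrite it via the infinitesimal determining equations as $-\i\eta^r_{Z^{j+1}U^\ell}$ (resp.\ keep $\phi^r_{U^\ell}$); all remaining terms carry submanifold jets, so the pivot Maurer--Cartan form enters with the invertible coefficient $-\i$ (resp.\ $1$) and can be solved for.

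Your handling of the obstacle is more careful than the paper's sketch, but two details need correcting. The top-order chain-rule terms such as $\phi^r_{Z^{j+1}U^{\ell-e_s}V^s}\,v^s_{u_s}$ are of the \emph{same} order, and indeed $\phi^r_{Z^{j+1}U^{\ell-e_s}V^s}=\eta^r_{Z^{j+1}U^\ell}$ is the pivot itself; they vanish because the first-order phantoms vanish, not because they are of lower order. Also, your claim that the correction terms become horizontal is neither true in general nor needed: $\alpha^4_{U_4}$, for instance, is never normalized in some branches.
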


\proof
For each $r=1,\ldots,4$, the component $\phi^{r;z}$ of the prolonged vector field \eqref{v-infty} is of the form
\[
\phi^{r;z}=1\phi^r_z+ \text{ terms  involving }  v^k_z, v^k_{\oz}, v^k_{u_l}, \qquad \text{for}\qquad k, l=1,\ldots,4.
\]
Using induction and the prolongation formula \eqref{eq: prolongation formula}, it follows for each $j\geq 0$ and $\ell\in\mathbb N^4_0$ that
\[
\phi^{r;z^ju^\ell}=1\phi^a_{z^ju^\ell}+\text{ terms  involving } v^k_J, \qquad \text{with} \qquad |J|\leq j+|\ell|.
\]
By the infinitesimal determining equations \eqref{eq: infinitesimal determining equations} we have $\phi^r_{z^ju^\ell}=-\i\eta^r_{z^ju^\ell}$, thus the recurrence relation of $V^r_{Z^jU^\ell}$ is
\[
\dt V^r_{Z^jU^\ell}=\varpi^r_{Z^jU^\ell}-\i\alpha^r_{Z^jU^\ell}+ \text{ terms  involving }  V^k_J, \qquad \text{with} \qquad |J|\leq j+|\ell|.
\]
Thanks to this equation, one can normalize the Maurer-Cartan form $\alpha^r_{Z^jU^\ell}$ by setting $V^r_{Z^jU^\ell}=0$ as was claimed in the first item of the lemma. The proof of the second item is completely similar and we leave it to the reader.
\endproof

\begin{Remark}
Lemma \ref{lem-ord-1} confirms that the {\it pluri-harmonic terms}, namely the coefficients of the monomials $z^j u^\ell$ and $\oz^j u^\ell$, can be removed from the normal form series expansion of our manifolds.
\end{Remark}

Let $M$ be a generic $6$-dimensional real submanifold of $\mathbb C^5$, represented locally as the graph of four real and real-analytic defining functions
\[
v^r=v^r(z,\oz,u_1,\ldots, u_4), \qquad r=1, \ldots, 4.
\]
After removing the pluri-harmonic terms, Mamai \cite{Mamai-13} shows in terms of the jet coordinates of $M$, that total nondegeneracy is equivalent to ask at the origin the following three circumstances
\begin{equation}\label{total-nondeg-conds}
 {\bf (i)}\, v^1_{z\oz}\neq 0, \qquad {\bf (ii)}\, {\sf det}\left(
                                \begin{array}{cc}
                                  v^2_{z^2\oz} & v^2_{z\oz^2}
                                  \vspace{.15cm}\\
                                  v^3_{z^2\oz} & v^3_{z\oz^2}\\
                                \end{array}
                              \right)\neq 0,
 \qquad {\bf (iii)} \, (v^4_{z^3\oz^2}, v^4_{z^2\oz^2})\neq (0,0).
\end{equation}
We will see shortly, in the course of the construction, how these circumstances manifest themselves in terms of the forthcoming (relative) invariants.
\subsection{Order two}

By Lemma \ref{lem-ord-1}, all the second order lifted invariants are normalized at this stage except for $V^1_{Z\oZ}$, $V^2_{Z\oZ}$, $V^3_{Z\oZ}$, $V^4_{Z\oZ}$. Taking into account the previous normalizations, we have the recurrence relations
\begin{equation}\label{order 2 req}
\dt V^r_{Z\oZ}=\varpi^r_{Z\oZ} - V^r_{Z\oZ}(\mu_Z + \omu_{\oZ}) + \sum_{s=1}^4 V^s_{Z\oZ}\,\alpha^r_{U_s},\qquad r=1,\ldots,4.
\end{equation}

In the most degenerate case, where the four lifted invariants $V^r_{Z\oZ}$ vanish locally around the origin, one can utilize the recurrence formula to prove by an induction on the order of lifted invariants that {\it all} $V^r_J$s with $|J|\geq 2$ vanish identically, as well. Then in this case, $M$ is nothing but the {\it flat} surface $\mathbb C\times\mathbb R^4$ defined by
\[
v^r=0, \qquad r=1, \ldots, 4.
\]
Thus, by the nondegeneracy assumption, at least one of the four already mentioned invariants, say $V^1_{Z\oZ}$, must be nonzero. This assumption coincides with the hypothesis {\bf (i)} in \eqref{total-nondeg-conds}. Consequently, in this branch we can set
\[
V^1_{Z\oZ} = 1,\qquad V^2_{Z\oZ} = V^3_{Z\oZ} = V^4_{Z\oZ} = 0.
\]
The recurrence relations \eqref{order 2 req} then yield the  normalizations
\[
\alpha^1_{U_1}= -\varpi^1_{Z\oZ}+\mu_Z+\omu_{\oZ}\qquad\text{and}\qquad
\alpha^r_{U_1}=-\varpi^r_{Z\oZ}, \qquad r=2, 3, 4.
\]
More generally, we have the following result.

\begin{Lemma}
\label{lem-ord-2}
The real Maurer--Cartan forms $\alpha^r_{U_1 U^\ell}$, with $r=1,\ldots,4$ and $\ell\in\mathbb N^4_0$, are normalized by setting $V^r_{Z\oZ U^\ell}= \delta^{r,\ell}_{1,(0,0,0,0)}$, where $\delta$ is the Kronecker delta.
\end{Lemma}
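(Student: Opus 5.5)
The plan is to mimic the proof of Lemma \ref{lem-ord-1}: isolate, in the recurrence relation for each lifted invariant $V^r_{Z\oZ U^\ell}$, the single Maurer--Cartan form of highest order that enters with a nonzero constant coefficient, and show it is $\alpha^r_{U_1U^\ell}$. The induction is on $|\ell|$, and the case $\ell=0$ is the order two computation \eqref{order 2 req} already carried out.

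\textbf{Structure of the prolonged coefficient.} Apply the prolongation formula \eqref{eq: prolongation formula} to the multi-index $J=(1,1,\ell)$. Differentiating $\phi^r-\xi v^r_z-\oxi v^r_{\oz}-\sum_s\eta^s v^r_{u_s}$ by $\Dt_z\Dt_{\oz}\Dt_{u}^\ell$ produces four kinds of terms.
\begin{itemize}
\item The pure term $\phi^r_{z\oz u^\ell}$. It vanishes, because $\phi^r_{z\oz,x}=-\i\eta^r_{z\oz,x}$ and $\eta^r_{z\oz}=0$ by \eqref{eq: infinitesimal determining equations}.
\item Terms in which $\eta^s$ is differentiated by $u^{\ell'}$ with $\ell'\le \ell$ and multiplied by $v^r_{z\oz u_s u^{\ell-\ell'}}$, or by lower derivatives of $v$. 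The top such term is $-\sum_s \eta^s_{u^\ell}\,v^r_{z\oz u_s}$, up to the combinatorial coefficient, and the companion terms from $\phi^r_{u^\ell\ldots}$ contribute via the real part of $\zeta^r$.
\end{itemize}
The cleanest way to see the key contribution is through the holomorphic relation. The coefficient $\phi^{r;z\oz u^\ell}$ contains $\sum_s \eta^r_{u_s u^\ell}\, v^s_{z\oz}$. It arises from $\phi^r$ depending on $v$: $\phi^r_{v^s}=\eta^r_{u_s}$, and the chain rule gives $\Dt_z\Dt_{\oz}$ of $v^s$ equal to $v^s_{z\oz}$. After invariantization, and using the normalizations already made ($V^1_{Z\oZ}=1$, $V^{2,3,4}_{Z\oZ}=0$, and the pluriharmonic normalizations of Lemma \ref{lem-ord-1}), this term becomes $\alpha^r_{U_1U^\ell}$ with coefficient $1$.

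\textbf{Induction on order.} I will show that
\[
\dt V^r_{Z\oZ U^\ell}=\varpi^r_{Z\oZ U^\ell}+\alpha^r_{U_1U^\ell}+\big(\text{terms in Maurer--Cartan forms of order}\le |\ell|+1\ \text{other than the }\alpha^\bullet_{U_1U^{\ell'}}\text{ with }|\ell'|=|\ell|\big).
\]
The remaining terms involve $\mu_{Z^jU^{\ell'}}$, $\alpha^s_{Z^jU^{\ell'}}$, $\gamma^s_{U^{\ell'}}$ and $\alpha^s_{U_1U^{\ell'}}$ with $|\ell'|<|\ell|$. Many of these have already been normalized by Lemma \ref{lem-ord-1}, or by the induction hypothesis, as explicit combinations of the $\varpi$'s and lower forms.

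Since every $\alpha^r_{U_1U^\ell}$ appears with unit coefficient and in no other equation at the same level, the system is triangular. Setting $V^r_{Z\oZ U^\ell}=\delta^{r,\ell}_{1,0}$ makes the left side vanish, and each equation can be solved for $\alpha^r_{U_1U^\ell}$. These forms are real, since $\alpha^r=\mu^{u_r}$, which is consistent with $V^r_{Z\oZ U^\ell}$ being real.

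\textbf{Main obstacle.} The delicate point is verifying that the coefficient of $\alpha^r_{U_1U^\ell}$ is exactly the invariant $V^1_{Z\oZ}=1$ and that no other same-order $\alpha^{r}_{U_kU^\ell}$ spoils the triangularity. Other forms $\alpha^r_{U_kU^\ell}$ with $k\ne1$ could appear multiplied by $V^k_{Z\oZ}$, but these vanish by the order two normalization.

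I also need to track the $\eta^s_{z}$, $\eta^s_{\oz}$ terms. They produce $\alpha^s_{Z U^{\ell}}$, $\alpha^s_{\oZ U^\ell}$, which are already normalized by Lemma \ref{lem-ord-1}, or for which $\alpha_{Z\oZ}=0$. This bookkeeping of the leading term is where I expect the only real care to be needed.
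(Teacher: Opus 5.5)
Your proposal is correct and takes essentially the same route as the paper. Both arguments isolate $\alpha^r_{U_1U^\ell}$ with coefficient $V^1_{Z\oZ}=1$, discard the $\alpha^r_{U_kU^\ell}$ with $k\neq 1$ because $V^k_{Z\oZ}=0$, and solve the real recurrence relation. You obtain the key term from the chain-rule contribution $\phi^r_{v^s u^\ell}v^s_{z\oz}=\eta^r_{u_s u^\ell}v^s_{z\oz}$, whereas the paper applies $\Dt_{u^\ell}$ to $\phi^{r;z\oz}=v^1_{z\oz}[\eta^r_{u_1}-\delta^r_1(\xi_z+\oxi_{\oz})]+\cdots$.

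One piece of bookkeeping you leave implicit is harmless. The same-order forms $\delta^r_1(\mu_{ZU^\ell}+\omu_{\oZ U^\ell})$ and the terms $V^r_{Z^2\oZ U^K}\mu_{U^{\ell-K}}$ with $K<\ell$ survive in the equation, so $\alpha^r_{U_1U^\ell}$ is simply expressed in terms of them, exactly as in the paper's final formula.
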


\begin{proof}
By the invariantization $\iota(\eta^r_{u_1u^\ell}) = \alpha^r_{U_1U^\ell}$, to show that the recurrence relations for the phantom invariants $V^r_{Z\oZ U^\ell} = 0$ can be solved for the Maurer--Cartan forms $\alpha^r_{U_1U^{\ell}}$, we must keep track of the vector field jets $\eta^r_{u_1u^{\ell}}$ in the prolonged vector field coefficients $\phi^{r;z\oz u^\ell}$.  From the prolongation formula \eqref{eq: prolongation formula 2} we have
\begin{equation}\label{v coef lemma}
\phi^{r;z\oz u^\ell}= \Dt_{u^\ell} \bigg(\phi^{r;z\oz}-\xi\, v^r_{zz\oz}-\oxi\,v^r_{z\oz\oz} - \sum_{s=1}^4 \eta^s\,v^r_{z\oz u_s}\bigg)
+ \xi\, v^r_{zz\oz u^\ell} + \oxi\, v^r_{z\oz\oz u^\ell} + \sum_{s=1}^4 \eta_s\, v^r_{z\oz u^{\ell} u_s}.
\end{equation}
In light of the infinitesimal determining equations \eqref{eq: infinitesimal determining equations}, the vector field jet $\eta^r_{u_1u^{\ell}}$ will appear via the derivatives of $\eta^r$ and $\phi^r$. We now keep track of the terms in \eqref{v coef lemma} that will not vanish in the recurrence relation of $V^r_{Z\oZ U^\ell}$ once we impose the order zero normalization \eqref{order 0 normalizations} and normalizations in Lemmas \ref{lem-ord-1} and \ref{lem-ord-2}.  Inspecting \eqref{v coef lemma}, it turns out that
\begin{equation}\label{phi-zozul}
\phi^{r;z\oz u^\ell} =\Dt_{u^\ell} \big(\phi^{r;z\oz}) - \sum_{K < \ell} \binom{\ell}{K}\big[\xi_{u^{\ell-K}}\, v^r_{zz\oz u^K} + \oxi_{u^{\ell-K}}\, v^r_{z\oz\oz u^K}\big]
 + \text{terms involving } v_{u^J}, v_{z\oz u^{J}},
\end{equation}
with
\[
\phi^{r;z\oz} = v^1_{z\oz}[ \eta^r_{u_1} - \delta^r_1(\xi_z+\oxi_{\oz})] + \text{terms involving } v^s_J\neq v^1_{zz}\text{ of order } |J| =1, 2,
\]
where we took into account the fact that $\phi^r_{z\oz} = 0$ using the infinitesimal determining equations \eqref{eq: infinitesimal determining equations}. Applying the differentiation $D_{u^\ell}$, one receives
\begin{equation}\label{eq: lemma2 prolongation term}
\Dt_{u^\ell}(\phi^{r;z\oz}) = v^1_{z\oz}[\eta^1_{u_1 u^\ell}-\delta^r_1(\xi_{z u^\ell} + \overline\xi_{\oz u^\ell})] + \ldots,
\end{equation}
where the unwritten terms $"\ldots"$ involve the jets $v^2_{z\oz}, v^3_{z\oz}, v^4_{z\oz}, v_{z\oz u^{J}}, v_{zu^J}, v_{\oz u^J}, v_{u^J}$, which all vanish once invariantized. Combining \eqref{phi-zozul} and \eqref{eq: lemma2 prolongation term}, and using the normalization $V^1_{Z\oZ} = 1$ with all other invariants vanished thus far, the recurrence relation for the phantom invariant $V^r_{Z\oZ U^\ell}=0$ reduces to
\[
0= \dt V^r_{Z\oZ U^\ell} = \varpi^r_{Z\oZ U^\ell}+\alpha^r_{U_1 U^\ell}-\delta^r_1(\mu_{Z U^\ell}+\overline\mu_{\oZ U^\ell}) - \sum_{K < \ell} \binom{\ell}{K}\big[V^r_{ZZ\oZ U^K} \mu_{U^{\ell-K}} + V^r_{Z\oZ\oZ U^K} \omu_{U^{\ell-K}}\, \big],
\]
which results in the claimed normalization
\[
\alpha^r_{U_1U^{\ell}} = -V^r_{ZZ\oZ U^\ell}\omega^Z - V^r_{Z\oZ\oZ U^\ell} \omega^{\oZ} + \delta^r_1(\mu_{Z U^\ell}+\overline\mu_{\oZ U^\ell}) + \sum_{K < \ell} \binom{\ell}{K}\big[V^r_{ZZ\oZ U^K} \mu_{U^{\ell-K}} + V^r_{Z\oZ\oZ U^K} \omu_{U^{\ell-K}}\, \big].
\]
of $\alpha^r_{U_1U^{\ell}}$.
\end{proof}

Owing to the length of the expressions in the forthcoming orders, we will occasionally write $"\equiv"$ instead of $"="$ when working modulo the horizontal coframe \eqref{eq: horizontal coframe}.

\subsection{Order three}

Taking into account Lemmas \ref{lem-ord-1} and \ref{lem-ord-2}, the order three recurrence relations that remain to be considered are
\begin{equation}\label{rec-3}
\aligned
\dt V^1_{Z^2\oZ} &\equiv  -V^1_{Z^2\oZ}\,\mu_Z + V^2_{Z^2\oZ}\,\alpha^1_{U_2} + V^3_{Z^2\oZ}\,\alpha^1_{U_3} + V^4_{Z^2\oZ}\,\alpha^1_{U_4} - \mu_{ZZ} + 4\i \omu_{U_1},\\
\dt V^k_{Z^2\oZ} &\equiv -V^k_{Z^2\oZ} \,(2\,\mu_Z+\omu_{\oZ}) +
V^2_{Z^2\oZ} \,\alpha^k_{U_2} + V^3_{Z^2\oZ}\,\alpha^k_{U_3} + V^4_{Z^2\oZ}\,\alpha^k_{U_4},\hspace{2cm} k=2,3,4.
\endaligned
\end{equation}
From the first equation, the Maurer--Cartan form $\omu_{U_1}$ can be readily normalized by setting
\[
V^1_{Z^2\oZ}=0.
\]

Next, for the other invariants and by means of some simple linear transformation, one easily annihilates in the defining power series of $M$ at least one of the coefficients $V^r_{Z^2\oZ}, r=2, 3, 4$, say: $V^4_{Z^2\oZ}$. Denote
\[
\Delta:=V^2_{Z^2\oZ}\cdot V^3_{Z\oZ^2}-V^2_{Z\oZ^2}\cdot V^3_{Z^2\oZ}.
\]
In light of the recurrence relations \eqref{rec-3}, one computes modulo horizontal coframe
\[
d\Delta\equiv\big(\alpha^2_{U_2}+\alpha^3_{U_3}-3\,\mu_Z-3\,\omu_{\oZ}\big)\,\Delta,
\]
which exhibits $\Delta$ as a {\it relative invariant}. Thus, its vanishing/nonvanishing remains invariant under holomorphic transformations. Taking into account the condition {\bf (ii)} of total nondegeneracy in \eqref{total-nondeg-conds}, we can locally assume here that $\Delta\neq 0$ which, according to its expression, plainly implies that $V^2_{Z^3\oZ}$ and $V^3_{Z^2\oZ}$ are locally nonzero. Let us set
\[
V^2_{Z^2\oZ}=1, \qquad V^3_{Z^2\oZ}=-\i,\qquad V^4_{Z^2\oZ}=0.
\]
Then the recurrence relations \eqref{rec-3} reduce to
\begin{equation}
\label{order3-red}
\aligned
0 &\equiv \alpha^1_{U_2} - \i \alpha^1_{U_3} - \mu_{ZZ} + 4\i \omu_{U_1},  \\
0 &\equiv \alpha^2_{U_2} - \i \alpha^2_{U_3} - (2\mu_Z + \omu_{\oZ}), \\
0 &\equiv \alpha^3_{U_2}-\i \alpha^3_{U_3} + \i(2\mu_Z + \omu_{\oZ}),\\
0 &\equiv \alpha^4_{U_2} -\i \alpha^4_{U_3}.
\endaligned
\end{equation}
By solving this system, one receives that
\[
\aligned
\omu_{U_1} &\equiv \frac{\i}{4}(\alpha^1_{U_2} - \i \alpha^1_{U_3} - \mu_{ZZ}),
\qquad
\mu_Z \equiv \frac{1}{3} \alpha^3_{U_3} - \i \alpha^2_{U_3},
\\
\alpha^2_{U_2} &\equiv \alpha^3_{U_3}, \qquad \alpha^3_{U_2} \equiv -\alpha^2_{U_3}, \qquad
\alpha^4_{U_2} \equiv \alpha^4_{U_3} \equiv 0.
\endaligned
\]

Upon further prolongations and with a similar arguments as the proofs of Lemmas \ref{lem-ord-1} and \ref{lem-ord-2}, we obtain the following general results.

\begin{Lemma}
\label{lem-ord-3}
For $j, k, l \in \mathbb{N}_0$ and $\ell\in\mathbb N^4_0$, it is possible to normalize
 \renewcommand\labelenumi{\theenumi)}
\begin{enumerate}
\item the Maurer--Cartan form $\mu_{U_1U^\ell}$ by setting $V^1_{Z^2\oZ U^\ell}=0$,

\item the Maurer--Cartan forms $\mu_{Z U^\ell}$ by setting $V^2_{Z^{2}\oZ U^\ell} = \delta^{\ell}_{(0,0,0,0)}$,

\item the two real Maurer--Cartan forms $\alpha^2_{U_2^{j+1} U_3^k U_4^l}$ and $\alpha^3_{U_2^{j+1} U_3^k U_4^l}$ by setting $V^3_{Z^{2}\oZ U_2^j U_3^k U_4^l}= -\i \delta^{j,k,l}_{0,0,0}$,

\item the real Maurer--Cartan forms $\alpha^4_{U_2^{j+1} U_3^{k} U_4^l}$ by setting ${\rm Re}V^4_{Z^2\oZ U_2^j U_3^{k} U_4^l}=0$,

\item the real Maurer--Cartan forms $\alpha^4_{U_3^{k+1} U_4^l}$ by setting ${\rm Im}V^4_{Z^2\oZ U_3^{k} U_4^l}=0$.
\end{enumerate}
\end{Lemma}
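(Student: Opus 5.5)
The plan is to argue exactly as in Lemmas \ref{lem-ord-1} and \ref{lem-ord-2}: induct on $|\ell|$ and track, inside the prolonged coefficient $\phi^{r;z^2\oz u^\ell}$, only the \emph{leading} vector-field jets that survive invariantization. First we write
\[
\phi^{r;z^2\oz u^\ell}=\Dt_{u^\ell}\big(\phi^{r;z^2\oz}\big)+\text{terms of the form }(\text{jets of }\xi,\oxi,\eta)\cdot v_{K},
\]
using \eqref{eq: prolongation formula 2}. We then compute $\phi^{r;z^2\oz}$ modulo terms that vanish after imposing \eqref{order 0 normalizations} and the normalizations of Lemmas \ref{lem-ord-1}, \ref{lem-ord-2}. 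After invariantization, the order-three recurrence \eqref{rec-3} already exhibits the leading part. For $r=1$ it is $-\mu_{ZZ}+4\i\omu_{U_1}+\sum_s V^s_{Z^2\oZ}\alpha^1_{U_s}$. For $r=2,3,4$ it is $-V^r_{Z^2\oZ}(2\mu_Z+\omu_{\oZ})+\sum_s V^s_{Z^2\oZ}\alpha^r_{U_s}$.

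Applying $\Dt_{u^\ell}$ and invariantizing, the recurrence relation of $V^r_{Z^2\oZ U^\ell}$ acquires the leading part obtained by replacing each Maurer--Cartan form $\nu$ above by $\nu_{U^\ell}$. All other terms involve strictly lower-order Maurer--Cartan forms in the $U$-direction, which are already normalized by the induction hypothesis, or they are multiplied by phantom invariants equal to zero. With the values $V^2_{Z^2\oZ}=1$, $V^3_{Z^2\oZ}=-\i$, $V^4_{Z^2\oZ}=0$ imposed, we get
\[
\aligned
0&\equiv \alpha^1_{U_2U^\ell}-\i\alpha^1_{U_3U^\ell}-\mu_{ZZU^\ell}+4\i\omu_{U_1U^\ell}+\cdots,\\
0&\equiv \alpha^2_{U_2U^\ell}-\i\alpha^2_{U_3U^\ell}-(2\mu_{ZU^\ell}+\omu_{\oZ U^\ell})+\cdots,\\
0&\equiv \alpha^3_{U_2U^\ell}-\i\alpha^3_{U_3U^\ell}+\i(2\mu_{ZU^\ell}+\omu_{\oZ U^\ell})+\cdots,\\
0&\equiv \alpha^4_{U_2U^\ell}-\i\alpha^4_{U_3U^\ell}+\cdots.
\endaligned
\]

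Each item is then read off by solving this system.

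Item 1) The first equation, with its conjugate, solves for $\omu_{U_1U^\ell}$, equivalently $\mu_{U_1U^\ell}$. Here $\alpha^1_{U_2U^\ell},\alpha^1_{U_3U^\ell},\mu_{ZZU^\ell}$ are treated as still free.

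Item 2) Combine the second and third equations as (second)$\,-\i\,$(third), with the real $\alpha$'s split by real and imaginary parts. This eliminates the $\alpha$'s enough to solve for the complex form $\mu_{ZU^\ell}$, extending the order-three solution $\mu_Z\equiv\frac13\alpha^3_{U_3}-\i\alpha^2_{U_3}$.

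Item 3) The remaining real and imaginary parts of the second and third equations solve for $\alpha^2_{U_2U^\ell}$ and $\alpha^3_{U_2U^\ell}$. This is done only for multi-indices $\ell$ with $\ell_1=0$, which is why the statement uses $U_2^jU_3^kU_4^l$. When $\ell_1>0$, the form $\alpha^{2,3}_{U_1\cdots}$ is already fixed by Lemma \ref{lem-ord-2}, so the equation has no new unknown in that slot.

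Items 4), 5) The fourth equation is complex with real unknowns $\alpha^4_{U_2U^\ell}$ and $\alpha^4_{U_3U^\ell}$. Its real part normalizes $\alpha^4_{U_2^{j+1}U_3^kU_4^l}$. Its imaginary part normalizes $\alpha^4_{U_3U^\ell}$, but only when $\ell$ contains no $U_2$, since otherwise that form was already consumed by item 4). This explains the index $U_3^{k+1}U_4^l$.

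The main obstacle is bookkeeping. We must ensure that each Maurer--Cartan form is normalized by exactly one phantom invariant, and that the triangularity (ordering by $|\ell|$, then by $\ell_1$, $\ell_2$) is genuine. In particular, we must check that $\alpha^r_{U_1U^\ell}$, fixed in Lemma \ref{lem-ord-2}, and the lower-order $\mu_{U^K}$ terms only contribute to the "$\cdots$" and do not obstruct solvability.

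I would first verify this explicitly at $|\ell|=1$ for each of $U_1,\dots,U_4$. Then I would argue by induction that the coefficient matrix of the new unknowns is always the same constant, nonsingular one as at order three. This holds because it depends only on $V^1_{Z\oZ}=1$, $V^2_{Z^2\oZ}=1$ and $V^3_{Z^2\oZ}=-\i$.
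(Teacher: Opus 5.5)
Your proposal follows the same route as the paper, which proves this lemma only by saying that one prolongs the order-three system \eqref{order3-red} along $U^\ell$ as in the proofs of Lemmas \ref{lem-ord-1} and \ref{lem-ord-2}; your index bookkeeping is also right: $U_1$ is excluded from items 3)--4) because of Lemma \ref{lem-ord-2}, and $U_2$ is excluded from item 5) because item 4) already uses those forms. One slip to fix: the combination (second)$-\i$(third) eliminates $2\mu_{ZU^\ell}+\omu_{\oZ U^\ell}$, not the $\alpha$'s, and is in fact the relation that gives item 3), whereas $\mu_{ZU^\ell}$ is read off from the second equation alone ($3\re\mu_{ZU^\ell}=\alpha^2_{U_2U^\ell}+\cdots$, $\im\mu_{ZU^\ell}=-\alpha^2_{U_3U^\ell}+\cdots$), and this is also the only equation available when $\ell_1>0$, since $V^3_{Z^2\oZ U_1\cdots}$ is not normalized in this lemma.
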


\begin{Remark}
Revisiting the recurrence relations \eqref{order3-red}, it was also possible to normalize the Maurer--Cartan forms $\alpha^k_{U_2} - \i \alpha^k_{U_3}$, $k=1,\ldots,4$.  In Lemma \ref{lem-ord-3} we avoid doing so as one of our goals is to normalize in each order as many lifted invariants as possible to construct a {\it minimal} moving frame.
\end{Remark}

\subsection{Order four}

We first consider the recurrence relations
\begin{equation*}
\begin{aligned}
\dt V^1_{Z^3\oZ} &\equiv 6\i\omu_{U_2}+6\,\omu_{U_3}-\mu_{Z^3} +V^3_{Z^3\oZ}\,\alpha^1_{U_3} + \cdots,\\
\dt V^1_{Z^2\oZ{}^2} &\equiv -24\,\re \mu_{U_3}+8\,\im \mu_{U_2}- 3\,V^2_{Z^2\oZ{}^2}\,\alpha^1_{U_2}+V^3_{Z^2\oZ{}^2}\,\alpha^1_{U_3}+ \cdots,\\
\dt V^2_{Z^2\oZ{}^2} &\equiv -2\,\alpha^1_{U_2}+\cdots,\\
\dt V^2_{Z^3\oZ} &\equiv -\frac{3}{2} \, \mu_{ZZ} -\frac{3}{2} \, \alpha^1_{U_2}+\cdots,\\
\dt V^3_{Z^2\oZ{}^2} &\equiv -2\,\alpha^1_{U_3}+\cdots,\\
\dt V^3_{Z^2\oZ U_1} &\equiv 2 \, \omu_{U_3}+2\i \omu_{U_2} +\frac{1}{2}\,(V^2_{Z^2\oZ{}^2}-\i V^3_{Z^2\oZ{}^2} + \i V^3_{Z^3\oZ})\,\alpha^1_{U_2}\\
&-\frac{1}{2}\,(\i V^2_{Z^2\oZ^2} + V^3_{Z^2\oZ{}^2}+V^3_{Z^3\oZ})\,\alpha^1_{U_3}+\cdots,
\end{aligned}
\end{equation*}
where the omitted terms are certain linear combinations of the Maurer--Cartan forms $\alpha^1_{U_4}$, $\alpha^2_{U_3}$, $\alpha^2_{U_4}$, $\alpha^3_{U_3}$, $\alpha^3_{U_4}$, $\alpha^4_{U_4}$. In light of these relations, we see it is possible to set
\[
V^1_{Z^3\oZ}=V^1_{Z^2\oZ{}^2}=V^2_{Z^2\oZ{}^2}=V^2_{Z^3\oZ}=V^3_{Z^2\oZ{}^2}=V^3_{Z^2\oZ U_1} = 0
\]
and solve their corresponding recurrence relations for the Maurer--Cartan forms
 \[
\mu_{Z^3}, \qquad \im \mu_{U_2}, \qquad \alpha^1_{U_2}, \qquad \mu_{ZZ},\qquad \alpha^1_{U_3},\qquad \omu_{U_3},\qquad \alpha^4_{U_3U_3}.
 \]
After prolongation, we obtain the following general result.

 \begin{Lemma}
 \label{lem-ord-4}
 For each $j, k, l \in \mathbb{N}_0$ and $\ell\in\mathbb N^4_0$, it is possible to normalize the Maurer--Cartan forms
 \renewcommand\labelenumi{\theenumi)}
\begin{enumerate}
\item $\mu_{Z^{j+3}U^\ell}$ by setting $V^1_{Z^{j+3}\oZ U^\ell}=0$,

\item ${\rm Re}\,\mu_{U_3^{k+1} U_4^l}$ by setting $V^1_{Z^2\oZ{}^2 U_3^k U_4^l}=0$,

\item  $\alpha^1_{U_2^{j+1} U_3^k U_4^l}$ by setting $V^2_{Z^2\oZ{}^2 U_2^j U_3^k U_4^l}=0$,

\item $\mu_{Z^2 U^\ell}$ by setting $V^2_{Z^3\oZ U^\ell} = 0$,

\item $\alpha^1_{U_3^{k+1} U_4^l}$ by setting $V^3_{Z^2\oZ{}^2 U_3^k U_4^l}=0$,

\item $\omu_{U_2^{j+1} U_3^{k}U_4^l}$ by setting $V^3_{Z^2\oZ U_1U_2^j U_3^k U_4^l}=0$.
\end{enumerate}
\end{Lemma}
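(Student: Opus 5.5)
The plan follows the pattern of Lemmas \ref{lem-ord-1}--\ref{lem-ord-3}. For each of the six families, I would track in the prolonged vector field coefficient $\phi^{r;J}$ only the vector field jet whose invariantization is the Maurer--Cartan form to be normalized. Here $J$ runs over the multi-indices $Z^{j+3}\oZ U^\ell$, $Z^2\oZ^2U_3^kU_4^l$, and so on.

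The first step is to write each such multi-index as $J=J_0+K$. The base index $J_0$ is one of the order-four indices $Z^3\oZ$, $Z^2\oZ^2$, $Z^3\oZ$ (for $V^2$), $Z^2\oZ U_1$, and the shift $K$ is a pure $u$-derivative (possibly also extra $z$'s in item 1). Using \eqref{eq: prolongation formula 2} iteratively, $\phi^{r;J}=\Dt_K(\phi^{r;J_0})$ plus terms of the form $(\Dt\xi)\,v_{\ldots}$, $(\Dt\oxi)\,v_{\ldots}$ and $(\Dt\eta^s)\,v_{\ldots}$. These extra terms involve only vector field jets of strictly lower order in the relevant direction, or submanifold jets already normalized to zero by Lemmas \ref{lem-ord-1}--\ref{lem-ord-3}.

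The second step uses the displayed order-four recurrences. Each $\phi^{r;J_0}$ contains the relevant leading vector field jet with a nonzero constant coefficient, inherited from $V^1_{Z\oZ}=1$, $V^2_{Z^2\oZ}=1$ and $V^3_{Z^2\oZ}=-\i$. Explicitly: $-\xi_{z^3}$ for $V^1_{Z^3\oZ}$; $-24\re\xi_{u_3}$ (together with $8\im\xi_{u_2}$) for $V^1_{Z^2\oZ^2}$; $-2\eta^1_{u_2}$ for $V^2_{Z^2\oZ^2}$; $-\tfrac32\xi_{z^2}$ for $V^2_{Z^3\oZ}$; $-2\eta^1_{u_3}$ for $V^3_{Z^2\oZ^2}$; and $2\oxi_{u_3}+2\i\oxi_{u_2}$ for $V^3_{Z^2\oZ U_1}$. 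Applying $\Dt_K$ and invariantizing, the recurrence for the phantom invariant $V^r_J=0$ becomes
\[
0\equiv c\,\nu_{K}+(\text{Maurer--Cartan forms already normalized or of lower order in the induction}),
\]
where $c\neq0$ and $\nu_K$ is the target form, namely $\mu_{Z^{j+3}U^\ell}$, $\re\mu_{U_3^{k+1}U_4^l}$, $\alpha^1_{U_2^{j+1}U_3^kU_4^l}$, $\mu_{Z^2U^\ell}$, $\alpha^1_{U_3^{k+1}U_4^l}$ or $\omu_{U_2^{j+1}U_3^kU_4^l}$. The Maurer--Cartan relations \eqref{MC-relations} are used here to convert $v$-derivatives to $u$-derivatives. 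Solving by induction on $|K|$ then gives each normalization.

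The main obstacle is that several target forms appear in the same order-four recurrences, so the restrictions on $\ell$ are exactly what prevents double normalization. For example, $\im\mu_{U_2}$ enters $V^1_{Z^2\oZ^2}$ but is normalized elsewhere; $\omu_{U_2}$ and $\omu_{U_3}$ both enter $V^1_{Z^3\oZ}$ and $V^3_{Z^2\oZ U_1}$; and $\alpha^1_{U_2}$ enters $V^2_{Z^3\oZ}$, $V^2_{Z^2\oZ^2}$ and $V^1_{Z^2\oZ^2}$. I would therefore process the families in a fixed order and verify at each prolonged level that the resulting linear system is triangular with nonzero diagonal, with each target form occurring with a nonzero coefficient in exactly one new equation. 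The order is: items 3 and 5 first (the $\alpha^1$ forms), then 4 and 1 (the $\mu_{Z^2}$ and $\mu_{Z^{j+3}}$ forms), then 6, then 2. This is why items 2, 3, 5, 6 omit $U_1$ (and $U_2$ in item 2): those derivatives would produce forms such as $\mu_{U_1U^\ell}$ or $\alpha^1_{U_1U^\ell}$, which Lemmas \ref{lem-ord-2} and \ref{lem-ord-3} already normalized.

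Checking this triangularity requires the explicit $\Dt_K$ of the bracketed order-four expressions. That amounts to bookkeeping of binomial coefficients, and I expect it to be the only delicate step.
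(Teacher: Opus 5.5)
Your proposal follows the paper's own route: the paper just displays the order-four recurrences, with the same leading terms you list, and obtains the lemma ``after prolongation''. That is the same $\Dt_K$-tracking of the prolongation formula written out in the proofs of Lemmas~\ref{lem-ord-1} and~\ref{lem-ord-2}. One small correction to your verification step: the equations for $V^3_{Z^2\oZ U_1U_3^kU_4^l}$ and $V^1_{Z^2\oZ{}^2U_3^kU_4^l}$ each contain the other's target (via $\omu_{U_3^{k+1}U_4^l}$ and $\im\mu_{U_2U_3^kU_4^l}$), so that block is coupled rather than triangular in any order, but it is nonsingular, since substituting $\omu_{U_2U_3^kU_4^l}\equiv\i\,\omu_{U_3^{k+1}U_4^l}+\cdots$ from the first turns the coefficient $-24$ of $\re\mu_{U_3^{k+1}U_4^l}$ in the second into $-32\neq0$.
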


Combining the order zero normalizations \eqref{order 0 normalizations} and Lemmas \ref{lem-ord-1}, \ref{lem-ord-2}, \ref{lem-ord-3}, \ref{lem-ord-4}, at this stage of the computations our cross-section consists of $Z=\oZ=U_r=0$ together with
\begin{equation}\label{partial-cross-sec}
\begin{gathered}
V^r_{Z\oZ U^\ell} = \delta^{r,\ell}_{1,(0,0,0,0)},\qquad
V^2_{Z^2\oZ U^\ell} = \delta^{\ell}_{(0,0,0,0)},\qquad
V^3_{Z^2\oZ U_2^j U_3^k U_4^l} = -\i \delta^{j,k,l}_{0,0,0}, \\
V^r_{Z^{j+1}U^\ell} = V^r_{U^\ell} = V^1_{Z^{j+2}\oZ U^\ell}
= V^1_{Z^2\oZ{}^2 U_2^j U_4^l}
= V^2_{Z^3\oZ U^\ell}= V^2_{Z^2\oZ{}^2 U_2^j U_3^k U_4^l}\\
=V^3_{Z^2\oZ U_1U_2^j U_3^k U_4^l}
= V^3_{Z^2\oZ{}^2U_3^k U_4^l}
=V^4_{Z^2\oZ U_3^{k} U_4^l} = \re V^4_{Z^2\oZ U_2^{j+1} U_3^k U_4^l}
 = 0,
\end{gathered}
\end{equation}
and their conjugations, where $j,k,l \in \mathbb{N}_0$, $\ell \in \mathbb{N}^4_0$, and $r=1,\ldots,4$. Furthermore, the Maurer--Cartan forms that remain to be normalized are
\begin{equation}\label{MC-3}
\im \mu_{U_3^{k+1} U_4^l}, \qquad \mu_{U_4^l}, \qquad \omu_{U_4^l}, \qquad \alpha^1_{U_4^l}, \qquad \alpha^2_{U_3^k U_4^l}, \qquad \alpha^3_{U_3^k U_4^l}, \qquad \alpha^4_{U_4^l},
\end{equation}
with $k,l\geq 0$ and with nonzero order.

Taking into account the normalizations performed thus far, the recurrence relations for the remaining fourth order (partially normalized) lifted invariants are
\begin{align}
\dt V^3_{Z^3\oZ}\equiv&\; V^3_{Z^3\oZ}\,\big(3\i\alpha^2_{U_3}-\frac{1}{3} \,\alpha^3_{U_3}\big)+V^4_{Z^3\oZ}\,\big(\i\alpha^2_{U_4}+\alpha^3_{U_4}\big),\notag\\
\dt V^4_{Z^2\oZ{}^2}\equiv&\; V^4_{Z^2\oZ{}^2}\big(\alpha^4_{U_4}-\frac{4}{3} \, \alpha^3_{U_3}\big),\notag\\
\dt V^4_{Z^3\oZ}\equiv&\; V^4_{Z^3\oZ}  \big(2\i\alpha^2_{U_3}-\frac{4}{3} \,\alpha^3_{U_3}+\alpha^4_{U_4}\big),\notag\\
\dt V^4_{Z^2\oZ U_1}\equiv&\; \frac{\rm i}{6} \big(V^3_{Z^3\oZ} V^4_{Z^2\oZ{}^2}-V^3_{Z\oZ{}^3}V^4_{Z^3\oZ}+6\,V^4_{Z^2\oZ U_1}\big) \alpha^2_{U_3}\notag\\
&+\frac{\rm i}{12}\big(5\,V^4_{Z^2\oZ{}^2} V^4_{Z^3\oZ}-2\,V^4_{Z^3\oZ} V^4_{Z\oZ{}^3}-3\, V^4_{Z^2\oZ{}^2} V^4_{Z^2\oZ{}^2}\big) \alpha^2_{U_4}\notag\\
&-\frac{5}{3}\,V^4_{Z^2\oZ U_1}\,\alpha^3_{U_3} -\frac{1}{4}V^4_{Z^2\oZ{}^2} \big(V^4_{Z^2\oZ{}^2} + V^4_{Z^3\oZ}\big) \alpha^3_{U_4}+V^4_{Z^2\oZ U_1}\,\alpha^4_{U_4},\label{rec-ord-4}\\
\dt (\im V^4_{Z^2\oZ U_2}) \equiv& -\frac{1}{16} (V^4_{Z^2\oZ{}^2})^2 \alpha^1_{U_4} -\frac{1}{12} V^3_{Z^3\oZ} V^3_{Z\oZ^3} \,\im V^4_{Z^3\oZ} \,\alpha^2_{U_3}\notag\\
&+ \big(\big(\frac{3}{4}\,\im V^4_{Z\oZ{}^2U_1}+\frac{1}{6}\, \im (V^4_{Z^3\oZ{}} V^3_{Z\oZ^3})+\frac{1}{16}V^4_{Z^2\oZ{}^2}\,\im V^3_{Z^3\oZ}\big) V^4_{Z^2\oZ^2}
\notag\\
& \ \ \ \ \ -\frac{1}{12} \im ((V^4_{Z^3\oZ})^2V^3_{Z\oZ{}^3})-\frac{1}{2}\im (V^4_{Z^2\oZ U_1}V^4_{Z\oZ{}^3})\big)\alpha^2_{U_4}\notag\\
&+\big(\big(\frac{1}{16}V^4_{Z^2\oZ^2} \re V^3_{Z^3\oZ{}}-\frac{1}{8}\,\re (V^4_{Z^3\oZ{}}V^3_{Z\oZ^3})-\frac{3}{4}\, \re V^4_{Z^2\oZ U_1}\big) V^4_{Z^2\oZ^2}\notag\\
&\ \ \ \ \ +\frac{1}{2}\, \re\!(V^4_{Z^2\oZ U_1}V^4_{Z^3\oZ{}})\big)\alpha^3_{U_4} -2\,\im V^4_{Z^2\oZ U_2}\, \alpha^3_{U_3} + \im V^4_{Z^2\oZ U_2} \, \alpha^4_{U_4}.\notag
\end{align}
In particular, the recurrence relations for $V^4_{Z^2\oZ{}^2}$ and $V^4_{Z^3\oZ}$ show that these functions, which correspond to condition {\bf (iii)} of the total nondegeneracy constraints \eqref{total-nondeg-conds}, are {\it relative} differential invariants. Based on this condition, these two lifted invariants can not be identically zero and thus we are led to consider the two major branches

\begin{description}
  \item[{\bf Branch A}] $V^4_{Z^3\oZ}\neq 0$.
  \item[{\bf Branch B}] $V^4_{Z^3\oZ}=0$ and $V^4_{Z^2\oZ{}^2}\neq 0$.
\end{description}
In the following, we consider each branch separately.

\section{Branch A: $V^4_{Z^3\oZ}\neq 0$}\label{sec-Branch-A}

In light of the recurrence relation for $V^4_{Z^3\oZ}$ in \eqref{rec-ord-4}, remaining group freedom acts on the relative invariant $V^4_{Z^3\oZ}$ via dilation
\[
V^4_{Z^3\oZ} \mapsto (a+\i b)V^4_{Z^3\oZ}\quad\text{where}\quad a\in \mathbb{R},\, b\geq 0.
\]
Since the imaginary part $b$ of the above dilation is always nonnegative, it is possible to set the complex invariant $V^4_{Z^3\oZ}$ either to $+\i$ or to $-\i$. Nevertheless, under the permutation $Z\leftrightarrow \oZ$, which preserves all the normalizations done thus far, we can restrict our attention to the case where
\begin{equation*}
V^4_{Z^3\oZ}= \i.
\end{equation*}
The recurrence relation for $V^4_{Z^3\oZ}$ then becomes
\begin{equation*}
0 \equiv 2\i \alpha^2_{U_3} - \frac{4}{3} \alpha^3_{U_3} +  \alpha^4_{U_4}
\end{equation*}
where, together with its conjugate, enables us to normalize the two real Maurer--Cartan forms
\begin{equation}\label{branchA mc norm}
\alpha^2_{U_3} \equiv 0,\qquad \alpha^3_{U_3} \equiv \frac{3}{4} \alpha^4_{U_4}.
\end{equation}
More generally we have the following result.

\begin{Lemma}
\label{lem-ord-4-Branch-A-1}
Let $k, l \in \mathbb{N}_0$.  The real Maurer--Cartan forms $\alpha^2_{U_3^{k+1} U_4^l}$ and $\alpha^3_{U_3^{k+1}U_4^{l}}$ can be normalized by setting $V^4_{Z^3\oZ U_3^k U_4^l}=\i \delta^{k,l}_{0,0}$.
\end{Lemma}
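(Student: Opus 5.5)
We follow the template of the proofs of Lemmas \ref{lem-ord-1} and \ref{lem-ord-2}: we track, inside the prolonged vector field coefficient $\phi^{4;z^3\oz u_3^k u_4^l}$, where the vector field jets $\eta^2_{u_3^{k+1}u_4^l}$ and $\eta^3_{u_3^{k+1}u_4^l}$ enter linearly with nonvanishing coefficient. Their invariantizations are $\alpha^2_{U_3^{k+1}U_4^l}$ and $\alpha^3_{U_3^{k+1}U_4^l}$. We then check that no other unnormalized Maurer--Cartan form of the same or higher order spoils solvability. We argue by induction on $k+l$. The base case $k=l=0$ is exactly the already established normalization \eqref{branchA mc norm}, which comes from the recurrence relation of $V^4_{Z^3\oZ}$ in \eqref{rec-ord-4} after setting $V^4_{Z^3\oZ}=\i$.

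For the inductive step we use \eqref{eq: prolongation formula 2}, writing
\[
\phi^{4;z^3\oz u_3^ku_4^l}=\Dt_{u_3^ku_4^l}\bigl(\phi^{4;z^3\oz}\bigr)+\text{terms involving } \xi,\oxi,\eta^s \text{ times jets } v^4_{z^3\oz u^K}.
\]
We first isolate the top-order vector field jets. In $\phi^{4;z^3\oz}$, the vector field jets of order one appear multiplied by fourth order jets. In particular they appear through
\[
v^4_{z^3\oz}\bigl(\eta^4_{u_4}-3\xi_z-\oxi_{\oz}\bigr)+\sum_{s}v^s_{z^3\oz}\,\eta^4_{u_s}.
\]
They also appear through terms of the form $v^s_{z^2\oz}\,\eta^4_{\ldots}$ and through $\xi$-derivatives multiplying third order jets. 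Applying $\Dt_{u_3^ku_4^l}$ and invariantizing, the top-order Maurer--Cartan forms occurring are $\alpha^4_{U_4U_3^kU_4^l}$, $\mu_{ZU_3^kU_4^l}$ and $\omu_{\oZ U_3^kU_4^l}$.

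By Lemma \ref{lem-ord-3}(2) (via $V^2_{Z^2\oZ U^\ell}=\delta$) and the relations coming from Lemma \ref{lem-ord-3}(3), these are expressed through $\alpha^2_{U_3U_3^kU_4^l}$, $\alpha^3_{U_3U_3^kU_4^l}$ and $\alpha^4_{U_4U_3^kU_4^l}$. The expected combination is the prolonged analogue of $2\i\alpha^2_{U_3}-\tfrac43\alpha^3_{U_3}+\alpha^4_{U_4}$, namely
\[
0\equiv 2\i\,\alpha^2_{U_3^{k+1}U_4^l}-\tfrac43\,\alpha^3_{U_3^{k+1}U_4^l}+\alpha^4_{U_4U_3^kU_4^l}+(\text{lower order forms}).
\]
The lower order forms are already normalized by the induction hypothesis and the earlier lemmas. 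Taking real and imaginary parts, exactly as in passing to \eqref{branchA mc norm}, we solve for the two real forms $\alpha^2_{U_3^{k+1}U_4^l}$ and $\alpha^3_{U_3^{k+1}U_4^l}$ in terms of $\alpha^4_{U_4^{l+1}U_3^k}$ and already normalized forms.

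The main obstacle is the bookkeeping. First, we must verify that the prolonged coefficient of $\alpha^2_{U_3^{k+1}U_4^l}$ and $\alpha^3_{U_3^{k+1}U_4^l}$ is still the constant nondegenerate matrix coming from $V^4_{Z^3\oZ}=\i$. Second, we must verify that no other remaining form from the list \eqref{MC-3} of the same order, such as $\im\mu_{U_3^{k+1}U_4^l}$ or $\alpha^1_{U_4^{\cdot}}$, enters in a way that obstructs solvability. For this we use the fact that, modulo lower-order terms, the derivative $\Dt_{u^K}$ of a product keeps the leading coefficient $v^4_{z^3\oz}$ unchanged. The remaining cross terms carry factors $v^4_{z^3\oz u^K}$ with $|K|\geq1$, which are phantom (zero) by the induction hypothesis, or carry lower-order Maurer--Cartan forms. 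Finally, the independence of the real and imaginary parts is checked by taking the conjugate recurrence relation, which is legitimate because the target value $\i\delta^{k,l}_{0,0}$ is constant.
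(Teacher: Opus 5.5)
Your proposal is correct and follows essentially the paper's route: the paper derives the base case from the recurrence relation of $V^4_{Z^3\oZ}$, after substituting $\mu_Z\equiv\frac{1}{3}\alpha^3_{U_3}-\i\alpha^2_{U_3}$ from order three, and then states the general case as following from the same prolongation bookkeeping used in Lemmas \ref{lem-ord-1} and \ref{lem-ord-2}, which is exactly what you sketch. Two harmless slips: your displayed part of $\phi^{4;z^3\oz}$ counts $v^4_{z^3\oz}\,\eta^4_{u_4}$ twice (the sum should run over $s\neq 4$), and not all lower-order forms are normalized at this stage (e.g.\ $\mu_{U_4^m}$, $\alpha^1_{U_4^m}$, $\alpha^4_{U_4^m}$ are still free), which does not matter because you only need to solve for the top-order pair $\alpha^2_{U_3^{k+1}U_4^l}$, $\alpha^3_{U_3^{k+1}U_4^l}$.
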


Substituting the normalizations \eqref{branchA mc norm} into the recurrence relation for $V^4_{Z^2\oZ{}^2}$ in \eqref{rec-ord-4} yields,  modulo the horizontal coframe
\[
\dt V^4_{Z^2\oZ{}^2}\equiv 0.
\]
This implies that the differential invariant $V^4_{Z^2\oZ{}^2}$ is now {\it independent of the pseudo-group parameters}. In the following, let us denote\footnote{It is just for some technical reason that we multiply $V^4_{Z^2\oZ{}^2}$ by $\frac{1}{4}$.} $\bm{b}:=\frac{1}{4}V^4_{Z^2\oZ{}^2}$.

On the other hand, the first recurrence relation in \eqref{rec-ord-4} becomes
\[
\dt V^3_{Z^3\oZ} \equiv -\frac{1}{4}\,V^3_{Z^3\oZ}\, \alpha^4_{U_4} - \alpha^2_{U_4} + \i \alpha^3_{U_4}.
\]
Thus, setting $V^3_{Z^3\oZ}=0$, it lets us to normalize the two real Maurer--Cartan forms $\alpha^2_{U_4}$ and $\alpha^3_{U_4}$.  More generally, we have the following result.

\begin{Lemma}
\label{lem-ord-4-Branch-A-2}
For $l \in \mathbb{N}_0$, it is possible to normalize the two real Maurer--Cartan forms $\alpha^2_{U_4^{l+1}}$ and $\alpha^3_{U_4^{l+1}}$ by setting $V^3_{Z^3\oZ U_4^l}=0$.
\end{Lemma}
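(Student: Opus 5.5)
Let me track, in the prolonged coefficient $\phi^{3;z^3\oz u_4^l}$, how the vector field jets $\eta^2_{u_4^{l+1}}$ and $\eta^3_{u_4^{l+1}}$ enter, in the same way as in the proofs of Lemmas \ref{lem-ord-1} and \ref{lem-ord-2}. The case $l=0$ has just been done above: after the Branch A normalizations \eqref{branchA mc norm}, the recurrence relation reads $\dt V^3_{Z^3\oZ}\equiv -\frac14 V^3_{Z^3\oZ}\alpha^4_{U_4}-\alpha^2_{U_4}+\i\alpha^3_{U_4}$. The coefficient of $-\alpha^2_{U_4}+\i\alpha^3_{U_4}$ comes from the term $v^4_{z^3\oz}\,\eta^3_{u_4}$ (the $\eta^3_{u_s}v^s_{z^3\oz}$ contribution of $\phi^{3;z^3\oz}$, with $V^4_{Z^3\oZ}=\i$). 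It also comes from the term $\eta^2_{u_4}$ reached through the $v^2$-coupling: $V^2_{Z^2\oZ}=1$ enters via $\mu_{Z}$-type normalizations, namely $\mu_Z \equiv \frac13\alpha^3_{U_3}-\i\alpha^2_{U_3}$ and its $U_4$-prolongations.

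For general $l$, apply $\Dt_{u_4}^l$ through the prolongation formula \eqref{eq: prolongation formula}. I will show by induction on $l$ that
\[
\phi^{3;z^3\oz u_4^l} = v^4_{z^3\oz}\,\eta^3_{u_4^{l+1}} + (\text{a term producing } -\eta^2_{u_4^{l+1}}) + \text{lower-order jets in } u_4 \text{ and terms with already normalized or higher jets } v_J.
\]
Here "lower order" means Maurer--Cartan forms $\alpha^r_{U_4^{m}}$, $\mu_{U_4^m}$, $\omu_{U_4^m}$ and so on with $m\le l$. The point is that differentiating in $u_4$ only increases the $u_4$-index of the vector field jets, and every coefficient multiplying a top-order jet $\eta^r_{u_4^{l+1}}$ is a lifted invariant of order $4$, namely $V^4_{Z^3\oZ}$ or $V^2_{Z^2\oZ}$. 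By the cross-section \eqref{partial-cross-sec}, those coefficients are invariantized to the same constants as for $l=0$. Products such as $\binom{l}{K}V^3_{Z^3\oZ U_4^K}\alpha^4_{U_4^{l-K+1}}$ with $K<l$ involve phantom invariants that vanish by induction, or forms of strictly lower $u_4$-order.

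After invariantization and imposing the phantom normalization $V^3_{Z^3\oZ U_4^l}=0$, the recurrence relation becomes
\[
0 = \varpi^3_{Z^3\oZ U_4^l} - \alpha^2_{U_4^{l+1}} + \i\,\alpha^3_{U_4^{l+1}} + (\text{forms already normalized at previous steps}).
\]
Its real and imaginary parts let me solve for the two real forms $\alpha^2_{U_4^{l+1}}$ and $\alpha^3_{U_4^{l+1}}$.

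The main obstacle is the bookkeeping. I must check that no other unnormalized form from the list \eqref{MC-3} of the same order appears with a coefficient that would obstruct solvability. The candidates are $\im\mu_{U_3U_4^l}$, $\mu_{U_4^{l+1}}$, $\alpha^1_{U_4^{l+1}}$ and $\alpha^4_{U_4^{l+1}}$. Such forms may appear, but they can simply be carried as parameters: the $2\times2$ real system in $(\alpha^2_{U_4^{l+1}},\alpha^3_{U_4^{l+1}})$ has the constant, nonsingular coefficient matrix coming from $-1+\i\cdot$, exactly as for $l=0$. I would verify this by inspecting the leading symbol with the weights $[z]=1$, $[u_1]=2$, $[u_2]=[u_3]=3$, $[u_4]=4$. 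The highest-weight contributions of $\phi^{3;z^3\oz u_4^l}$ then reduce to the $l=0$ computation differentiated $l$ times in $u_4$.
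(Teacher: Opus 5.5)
Your overall route matches the paper's, which gives nothing beyond the $l=0$ computation and ``more generally'': take the $l=0$ relation $\dt V^3_{Z^3\oZ}\equiv -\frac14 V^3_{Z^3\oZ}\alpha^4_{U_4}-\alpha^2_{U_4}+\i\alpha^3_{U_4}$ and propagate it in $U_4$ by prolongation. However, the mechanism you give for $-\alpha^2_{U_4^{l+1}}$ is wrong. That term is the only thing that makes the real part of the equation solvable, and the induction you state would not produce it.

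\begin{itemize}
\item $\iota(\phi^{3;z^3\oz u_4^l})$ contains no $\alpha^2_{U_4^{l+1}}$ at all. The $\eta^2$-jets enter $\phi^{3;J}$ only through the terms $-\binom{J}{K}(\Dt_K\eta^2)\,v^3_{(J-K)u_2}$. On the cross-section these coefficients either vanish (since $V^3_{Z\oZ U^\ell}=V^3_{Z^2\oZ U_2U_4^m}=0$) or multiply $\alpha^2_{U_4^m}$ with $m\le l$.
\item The only direct top-order term is $V^4_{Z^3\oZ}\alpha^3_{U_4^{l+1}}=\i\,\alpha^3_{U_4^{l+1}}$, which contributes to the imaginary part only.
\item $\mu_Z$ is not the source either. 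Its normalization $\mu_Z\equiv\frac13\alpha^3_{U_3}-\i\alpha^2_{U_3}$ involves $\alpha^2_{U_3}$, which is even $\equiv 0$ in Branch A. Its $U_4$-prolongations involve $\alpha^2_{U_3U_4^l}$. And $\mu_Z$ enters $\dt V^3_{Z^3\oZ}$ only multiplied by $V^3_{Z^3\oZ}$, where it is part of the $-\frac14 V^3_{Z^3\oZ}\alpha^4_{U_4}$ term.
\end{itemize}
Carried out as you describe, the argument normalizes $\alpha^3_{U_4^{l+1}}$ but not $\alpha^2_{U_4^{l+1}}$.

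The missing idea is that $\alpha^2_{U_4^{l+1}}$ enters indirectly, through previously normalized forms whose defining relations couple to $v^2$. The coefficient $\phi^{3;z^3\oz u_4^l}$ contains $-3(\Dt_z^2\Dt_{u_4}^l\xi)\,v^3_{z^2\oz}$, which invariantizes to $3\i\,\mu_{Z^2U_4^l}$. It also contains a multiple of $\omu_{U_1U_4^l}$, through $\alpha^3_{Z^2U_1U_4^l}$ and the $\oxi$-terms.

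Now $\mu_{Z^2U_4^l}$ is normalized by $V^2_{Z^3\oZ U_4^l}=0$ (Lemma \ref{lem-ord-4}). The recurrence relation of that invariant contains $V^4_{Z^3\oZ}\,\alpha^2_{U_4^{l+1}}$, coming from $\phi^2_{v^4u_4^l}\,v^4_{z^3\oz}=\eta^2_{u_4^{l+1}}v^4_{z^3\oz}$. At $l=0$, inserting the normalizations of $\omu_{U_1}$ (from $V^1_{Z^2\oZ}=0$) and of $\mu_{ZZ}$ (from $V^2_{Z^3\oZ}=0$) is exactly what turns these terms into $\i V^4_{Z^3\oZ}\alpha^2_{U_4}$ in \eqref{rec-ord-4}.

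So your induction must be run on the normalized recurrence relations, not on $\phi^{3;z^3\oz u_4^l}$ alone. It has to run simultaneously over the prolonged normalizations of $\mu_{Z^2U_4^l}$, $\omu_{U_1U_4^l}$, $\alpha^1_{U_2U_4^l}$ and $\alpha^1_{U_3U_4^l}$, showing that their top $U_4$-order parts are the $U_4^l$-shifts of the $l=0$ ones. With that repair, the $2\times 2$ real system for $\alpha^2_{U_4^{l+1}},\alpha^3_{U_4^{l+1}}$ has the same invertible matrix as at $l=0$, and the lemma follows.
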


At this stage of the problem, the remaining unnormalized Maurer--Cartan forms \eqref{MC-3} are
\begin{equation}\label{MC-4-B-A}
\im \mu_{U_3^{k+1} U_4^l}, \qquad \mu_{U_4^{l+1}}, \qquad \omu_{U_4^{l+1}}, \qquad \alpha^1_{U_4^{l+1}}, \qquad \alpha^4_{U_4^{l+1}},
\end{equation}
with $j, l \in \mathbb{N}_0$.
By the above normalizations applied so far, the two recurrence equations that remain to be considered in \eqref{rec-ord-4} are
\begin{subequations}
\label{rec-rel-ord-4-2}
\begin{align}
\dt(\im V^4_{Z^2\oZ U_2})&\equiv -\frac{1}{2}\, \im V^4_{Z^2\oZ U_2}\, \alpha^4_{U_4}-\bb^2\,\alpha^1_{U_4},\label{rec-rel-ord-4-2-eq1}
\\
\dt V^4_{Z^2\oZ U_1} &\equiv -\frac{1}{4}\, V^4_{Z^2\oZ U_1}\, \alpha^4_{U_4}.\label{rec-rel-ord-4-2-eq2}
\end{align}
\end{subequations}
Notice that if $\bb=\frac{1}{4} V^4_{Z^2\oZ^2}\neq 0$, then the recurrence relation \eqref{rec-rel-ord-4-2-eq1} allows us to normalize $\alpha^1_{U_4}$.  When $\bb =0$, we will see in Lemma \ref{lem-alpha1-U4-A'} that $\alpha^1_{U_4}$ can still be normalized at the next order $5$. With the aim of constructing a minimal moving frame, we therefore divide the subsequent computations according to  whether $\bb$ vanishes or not.  It refines {\bf Branch A} into
\begin{description}
\item[Branch A$^\prime$] $\bb \neq 0$,
\item[Branch A$^{\prime\prime}$] $\bb = 0$.
\end{description}

Before falling into these subbranches in details, we examine several fifth order recurrence relations that will facilitate for further normalization of the Maurer--Cartan forms. First, we have
\begin{equation}
\label{B-A-5-1}
\aligned
\dt V^3_{Z\oZ{}^3 U_1} &\equiv -\frac{3}{4}\,V^3_{Z\oZ{}^3 U_1}\,\alpha^4_{U_4}-2\,\mu_{U_4},
\\
\dt V^2_{Z^2\oZ{}^3}&\equiv -\frac{1}{2}\,V^2_{Z^2\oZ{}^3}\,\alpha^4_{U_4}+4\i \im \mu_{U_3}+(\i-\frac{15}{8}\,V^4_{Z^2\oZ{}^2}) \,\alpha^1_{U_4}.
\endaligned
\end{equation}
Thus, setting
\[
V^3_{Z\oZ{}^3 U_1} = \im V^2_{Z^2\oZ{}^3}=0,
\]
allows for the normalization of the Maurer--Cartan forms $\mu_{U_4}$ and $\im \mu_{U_3}$. In light of these normalizations, we then have the equations
\begin{equation*}
\aligned
\dt V^1_{Z^2\oZ{}^2 U_2} &\equiv - \frac{5}{4} V^1_{Z^2\oZ{}^2U_2} \alpha^4_{U_4}-32\, \im \mu_{U_3^2} + 21\, \im V^4_{Z^2\oZ{} U_1} \alpha^4_{U_4^2}+{\sf C_1} \,\alpha^1_{U_4}, \\
\dt V^3_{Z^3\oZ U_2} &\equiv -V^3_{Z^3\oZ U_2}\alpha^4_{U_4} - \frac{3}{4} \alpha^4_{U_4^2}+{\sf C_2} \,\alpha^1_{U_4},
\endaligned
\end{equation*}
for some certain polynomial coefficients $\sf C_1$ and $\sf C_2$ in terms of the lifted invariants $V^r_J$. These two relations allow us for the normalization of the Maurer--Cartan forms\footnote{Motivated by the second relation in \eqref{B-A-5-1}, one might propose to normalize $\im \mu_{U_3^2}$ by means of the sixth order recurrence relation of $dV^2_{Z^2\oZ^3U_3}$. But, seeking the minimality, we have succeeded in normalizing it already in the current order five.} $\im \mu_{U_3^2}$ and $\alpha^4_{U_4^2}$ by setting $V^1_{Z^2\oZ{}^2 U_2} = \re V^3_{Z^3\oZ U_2} = 0$. Upon further prolongation, we obtain the following result.

\begin{Lemma}\label{lem-A-1-ord-5}
For $j, l \in \mathbb{N}_0$, it is possible to normalize the Maurer--Cartan form
\renewcommand\labelenumi{\theenumi)}
\begin{enumerate}
\item $\mu_{U_4^{l+1}}$ by setting $V^3_{Z\oZ{}^3U_1U_4^l}=0$,

\item $\im \mu_{U_3 U_4^l}$ by setting $\im V^2_{Z^2\oZ{}^3U_4^l}=0$,

\item $\im \mu_{U_3^{j+2} U_4^l}$ by setting $V^1_{Z^2\oZ{}^2U_2 U_3^j U_4^l}=0$,

\item $\alpha^4_{U_4^{l+2}}$ by setting $\re V^3_{Z^3\oZ U_2 U_4^l} = 0$.
\end{enumerate}
\end{Lemma}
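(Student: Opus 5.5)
The plan follows the template of the proofs of Lemmas \ref{lem-ord-1} and \ref{lem-ord-2}. For each of the four families I will track, through the prolongation formula \eqref{prolong-formula}, the coefficient of the top-order vector field jet that becomes the targeted Maurer--Cartan form after invariantization. I will then check that this coefficient is a nonzero constant once all normalizations of \eqref{partial-cross-sec}, Lemmas \ref{lem-ord-4-Branch-A-1}, \ref{lem-ord-4-Branch-A-2} and Branch A are imposed. The base cases $l=0$ (and $j=0$) are exactly the recurrence relations already displayed in \eqref{B-A-5-1} and the two relations following it, with coefficients $-2$, $4\i$, $-32$ and $-\frac34$. The general case comes by differentiating those base relations with respect to $u_4^l$ (and $u_3^j$ in item 3). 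Formula \eqref{prolong-formula} gives
\[
\phi^{r;J,u_4^l}=\Dt_{u_4}^l\phi^{r;J}+\text{terms linear in jets of } \xi,\oxi,\eta \text{ multiplied by } v_{K}.
\]
Therefore the leading Maurer--Cartan form in the recurrence relation of $V^r_{JU_4^l}$ is obtained from the leading one for $V^r_J$ by appending $U_4^l$ to its subscript. The resulting coefficients are the same constants $-2$, $4\i$, $-32$, $-\frac34$, and the remaining terms involve only lower-order jets.

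Concretely, for item 1 the top term of $\phi^{3;z\oz^3u_1}$ comes from $\oxi$. Using $\phi^3_{u_1}=-\eta^3_{v^1}$ together with $V^1_{Z\oZ}=1$ and $V^3_{Z^2\oZ}=-\i$, it produces $\mu_{U_4}$ multiplied by a constant; applying $\Dt_{u_4}^l$ gives $\mu_{U_4^{l+1}}$. Item 2 is handled the same way: in $\phi^{2;z^2\oz^3}$ the derivative $\Dt_{u_4}^l$ converts $\im\mu_{U_3}$ into $\im\mu_{U_3U_4^l}$. In item 3 we apply $\Dt_{u_3}^j\Dt_{u_4}^l$ to $\phi^{1;z^2\oz^2u_2}$ and obtain $\im\mu_{U_3^{j+2}U_4^l}$. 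In item 4, $\Dt_{u_4}^l$ applied to $\phi^{3;z^3\oz u_2}$ yields $\alpha^4_{U_4^{l+2}}$ with coefficient $-\frac34$; its real part is what $\re V^3_{Z^3\oZ U_2U_4^l}=0$ normalizes.

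The next step is to organize the induction so that the solved system is triangular. Order the new phantom invariants by total order and, within a fixed order, by $l$. For each of them, every other Maurer--Cartan form that appears should be one of the following: already normalized in Lemmas \ref{lem-ord-1}--\ref{lem-ord-4-Branch-A-2}; of strictly lower order and normalized at an earlier step of the induction; or one of the genuinely remaining forms $\alpha^1_{U_4^{m}}$, $\omu_{U_4^m}$, $\alpha^4_{U_4}$, which simply remain as parameters. Solving successively expresses each target form in terms of the horizontal coframe and these leftovers, just as in \eqref{B-A-5-1} where $\alpha^1_{U_4}$ is carried along.

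I expect the main obstacle to be showing that no competing top-order jet of the same type enters with a coefficient that could cancel the leading constant. For instance, in item 3 one must make sure that $\re\mu_{U_3^{j+2}U_4^l}$ and $\alpha^1_{U_3^{j+2}U_4^l}$ (already normalized, by Lemma \ref{lem-ord-4}) do not mix with $\im\mu_{U_3^{j+2}U_4^l}$. Likewise, in item 4 the imaginary part of the coefficient must not couple to unnormalized forms in a way that obstructs solving for $\alpha^4_{U_4^{l+2}}$. I would settle this by weight counting: assign weights to jets consistent with $[z]=1$, $[u_1]=2$, $[u_2]=[u_3]=3$, $[u_4]=4$. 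Each recurrence relation is weighted-homogeneous at the model, so only the finitely many jets of the matching weight can appear with constant coefficients. These are exactly the ones visible in the base-case computations, and appending $u_4^l$ or $u_3^j$ shifts all of their weights uniformly.
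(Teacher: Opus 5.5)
Your route is the paper's own. The paper supports this lemma only by the order-five relations \eqref{B-A-5-1} and the two displayed after them, followed by ``upon further prolongation'', and you propose to make that prolongation explicit. The gap is in the step that carries all the content: that after prolongation and substitution each target still appears with the same constant $-2$, $4\i$, $-32$, $-\frac34$.

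These targets are not top-order jets; $\mu_{U_4^{l+1}}$ has order $l+1$ in the relation of an invariant of order $5+l$. The displayed constants are net coefficients obtained only \emph{after} substituting earlier normalizations. For example, Lemma \ref{lem-ord-2} feeds in $V^3_{Z^2\oZ}\mu_{U_4^{l+1}}+V^3_{Z\oZ^2}\omu_{U_4^{l+1}}$ through $\alpha^3_{U_1U_4^{l+1}}$. So appending $U_4^l$ or $U_3^j$ transfers the coefficient only if every form substituted in the base case is, after appending, normalized by the appended member of the \emph{same} family.

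This is essentially automatic for $U_4$, since all normalization families used so far are closed under appending $U_4$. It is not automatic for $U_3$, which item 3 needs. A weight $-5$ form such as $\alpha^2_{U_4^2}$ is normalized in Lemma \ref{lem-ord-4-Branch-A-2}. After appending $U_3$ it becomes $\alpha^2_{U_3U_4^2}$, which is normalized instead through $V^4_{Z^3\oZ U_4^2}$ of Lemma \ref{lem-ord-4-Branch-A-1}. That relation contains $\alpha^4_{U_4^3}$, of the same weight as $\im\mu_{U_3^3}$. Likewise $\im\mu_{U_3}$ (item 2) becomes item 3's own target, although $\im V^2_{Z^2\oZ^3U_3}$ is not a phantom.

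So new same-weight terms can appear after prolongation, and ``the remaining terms involve only lower-order jets'' is not right. Your weight count, checked only against the post-substitution displays, does not show that these terms leave the target's coefficient untouched.

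There are three further points to fix.
(i) In Branch A the weight-zero invariant $\bb=\frac14V^4_{Z^2\oZ^2}$ is not constant; see the coefficient $\i-\frac{15}{8}V^4_{Z^2\oZ^2}$ of $\alpha^1_{U_4}$. Same-weight coefficients are therefore a priori polynomials in $\bb$, not constants, and could vanish at special values.
(ii) $\omu_{U_4^m}$ is not a leftover parameter; it is the conjugate of the item 1 target. You must check that its coefficient in $\dt V^3_{Z\oZ^3U_1U_4^l}$ stays zero, or that the $2\times 2$ system with the conjugate relation is nonsingular.
(iii) The induction must be ordered by weight, not by order and then $l$. The base relation of item 3 already contains $\alpha^4_{U_4^2}$, the item 4 target of the same order and the same $l$.

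One way to close the gap is to treat the whole system weight by weight.
\begin{enumerate}
\item At weights $\le -5$, show that every form containing $U_4$ is solved from a relation whose same-weight part contains only such forms. The only exceptions, where a $U_4$-free phantom solves a form containing $U_4$, are the bases of Lemma \ref{lem-ord-4-Branch-A-2} and of items 1 and 4, at weights $-1,-3,-4$.
\item Show that every other $U_4$-free form of weight $\le -5$ is normalized by a family closed under appending $U_3$.
\end{enumerate}
The system is then block-triangular, with the base computation as the relevant diagonal block, and the coefficient transfer follows.
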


By virtue of this lemma, the list of the remaining unnormalized Maurer-Cartan forms is now reduced shortly to
\begin{equation*}
\alpha^1_{U_4^{l+1}}\quad \text{for}\quad  l\geq 0 \qquad {\rm and} \qquad \alpha^4_{U_4}.
\end{equation*}

Now, we continue the next normalizations through the two distinct branches {\bf A$^\prime$} and {\bf A$^{\prime\prime}$}.

\subsection{Branch A$^\prime$}

As already mentioned, in this branch one can solve the recurrence relation \eqref{rec-rel-ord-4-2-eq1} for normalizing $\alpha^1_{U_4}$, after setting  $\im V^4_{Z^2\oZ U_2}=0$. More generally,

 \begin{Lemma}
 \label{lem-Branch-A-alpha1-U4}
 For every $l\geq 0$, one can normalize the Maurer--Cartan form $\alpha^1_{U_4^{l+1}}$ by setting $\im V^4_{Z^2\oZ U_2U_4^l}=0$.
 \end{Lemma}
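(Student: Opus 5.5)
We argue by induction on $l$, in the same way as Lemmas \ref{lem-ord-1} and \ref{lem-ord-2}: we follow only the Maurer--Cartan form $\alpha^1_{U_4^{l+1}}$ through the prolonged recurrence relations. The case $l=0$ is already done. Once $\im V^4_{Z^2\oZ U_2}=0$ is imposed, the relation \eqref{rec-rel-ord-4-2-eq1} reduces to $0\equiv-\bb^2\,\alpha^1_{U_4}$. Since $\bb\neq0$ in Branch A$^\prime$, this gives $\alpha^1_{U_4}\equiv0$ modulo the horizontal coframe.

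For general $l$, we use the prolongation formula \eqref{eq: prolongation formula 2} with ${\sf x}=u_4$, applied $l$ times, to write
\[
\phi^{4;z^2\oz u_2u_4^l}=\Dt_{u_4}^l\big(\phi^{4;z^2\oz u_2}\big)+\text{terms involving }\xi_{u_4^{m}},\ \oxi_{u_4^m},\ \eta^s_{u_4^m}\text{ times jets }v_J .
\]
From the order-four computation we know that the coefficient of $\eta^1_{u_4}$ in $\phi^{4;z^2\oz u_2}$, taken imaginary part and evaluated on the cross-section, is $-\frac{1}{16}(V^4_{Z^2\oZ^2})^2=-\bb^2$. This coefficient comes from products of the jet $v^4_{z^2\oz^2}$ with derivatives of $\eta^1$ produced by the prolongation. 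Applying $\Dt_{u_4}^l$ therefore gives the top-order term $-\bb^2\,\eta^1_{u_4^{l+1}}$. All other occurrences of $\eta^1_{u_4^{m}}$ have $m\le l$, and they are multiplied by lifted invariants of the form $V_{J,U_4^{l+1-m}}$.

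We then invariantize the relation for the phantom invariant $\im V^4_{Z^2\oZ U_2U_4^l}=0$ and use every normalization collected so far: \eqref{partial-cross-sec}, Lemmas \ref{lem-ord-4-Branch-A-1}, \ref{lem-ord-4-Branch-A-2} and \ref{lem-A-1-ord-5}, and the induction hypothesis. This yields
\[
0\equiv-\bb^2\,\alpha^1_{U_4^{l+1}}+c\,\alpha^4_{U_4}+\sum_{m\le l}C_m\,\alpha^1_{U_4^m}+(\text{already normalized forms}).
\]
By the induction hypothesis each $\alpha^1_{U_4^m}$ with $m\le l$ is already expressed in terms of invariants and the coframe. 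The list \eqref{MC-4-B-A}, as reduced by Lemma \ref{lem-A-1-ord-5}, shows that the only other free form is $\alpha^4_{U_4}$. It enters with some coefficient, such as $-\frac12\im V^4_{Z^2\oZ U_2U_4^l}$ in the case $l=0$, and this causes no difficulty because we solve the relation for $\alpha^1_{U_4^{l+1}}$, not for $\alpha^4_{U_4}$. Because $\bb$ is a nonzero invariant (indeed $\dt\bb\equiv0$), we can divide by $-\bb^2$ and obtain the normalization.

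The main obstacle is checking that the leading coefficient really is exactly $-\bb^2$ at every $l$. We must rule out contributions to $\eta^1_{u_4^{l+1}}$ from other channels: the determining equations \eqref{eq: infinitesimal determining equations} that convert $\phi^r$-jets into $\eta^r$-jets, or earlier normalizations that are themselves expressed through $\alpha^1_{U_4^{l+1}}$, such as those of $\mu_{U_4^{l+1}}$ or $\alpha^4_{U_4^{l+1}}$ in Lemma \ref{lem-A-1-ord-5}. To handle this, we would assign a weighted order in which $[u_4]=4$ and examine the weight of each term. The forms normalized in Lemma \ref{lem-A-1-ord-5} were solved with coefficients of $\alpha^1_{U_4^{\cdot}}$ of lower $U_4$-order, as with the constants ${\sf C_1}$ and ${\sf C_2}$. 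Substituting them back therefore changes only the lower terms $C_m$ and leaves the leading coefficient $-\bb^2$ unchanged.
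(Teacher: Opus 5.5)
Your plan matches what the paper does: read off $l=0$ from \eqref{rec-rel-ord-4-2-eq1} using $\bb\neq0$, then appeal to prolongation in $U_4$. The paper gives nothing beyond this. However, the argument you give for the inductive step rests on a false premise.

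\textbf{The coefficient $-\bb^2$ is not a raw prolongation coefficient.} The quantity $-\frac1{16}(V^4_{Z^2\oZ^2})^2$ is not the coefficient of $\eta^1_{u_4}$ in $\phi^{4;z^2\oz u_2}$. Jets of $\eta^1$ enter $\phi^{4;J}$ only through $-\Dt_J(\eta^1v^4_{u_1})+\eta^1v^4_{J,u_1}$. On the cross-section, the only surviving term for $J=z^2\oz u_2$ is $-V^4_{Z^2\oZ U_1}\,\alpha^1_{U_2}$. So $\alpha^1_{U_4}$ does not occur in $\iota(\phi^{4;z^2\oz u_2})$ at all. This is consistent with the fact that a raw fourth-order prolonged coefficient cannot be quadratic in fourth-order jets. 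The term $-\bb^2\alpha^1_{U_4}$ in \eqref{rec-ord-4} arises only after substituting the earlier normalizations of the other weight-two forms that occur in that relation. Examples are $\mu_{U_2}$ and $\omu_{U_2}$, which enter through $-(\Dt_{u_2}\xi)v^4_{z^3\oz}-(\Dt_{u_2}\oxi)v^4_{z^2\oz^2}$ in \eqref{eq: prolongation formula 2}. It is an eliminated coefficient, not a raw one. By the same count, $\eta^1_{u_4^{l+1}}$ has raw coefficient zero in $\phi^{4;z^2\oz u_2u_4^l}$. Applying $\Dt_{u_4}^l$ to a raw coefficient therefore proves nothing.

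\textbf{Your weight-based repair does not close the gap.} Weights do show that only forms of weight $\le 4l+2$ can occur, since every positive-weight invariant vanishes on the cross-section. They also show that the top-weight coefficients are polynomials in $\bb$. But the normalized forms of the same weight $4l+2$ as $\alpha^1_{U_4^{l+1}}$ are not solved ``with lower $\alpha^1$'s''.
\begin{itemize}
\item $\re\mu_{U_3U_4^l}$ is normalized from $V^1_{Z^2\oZ^2U_4^l}=0$. Its relation contains the raw term $V^4_{Z^2\oZ^2}\,\alpha^1_{U_4^{l+1}}$, coming from $\phi^1_{v^4u_4^l}=\eta^1_{u_4^{l+1}}$.
\item Already for $l=0$, the imaginary part of the second relation in \eqref{B-A-5-1} gives $\im\mu_{U_3}\equiv-\frac14\alpha^1_{U_4}+\cdots$. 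This contradicts your description of Lemma \ref{lem-A-1-ord-5}.
\end{itemize}
Same-weight substitutions of this kind are exactly what creates $-\bb^2$ at $l=0$, so they cannot be pushed into the lower terms $C_m$.

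What is missing is an argument, uniform in $l$, that after eliminating \emph{all} normalized forms of weight $4l+2$, the coefficient of $\alpha^1_{U_4^{l+1}}$ in the relation for $\im V^4_{Z^2\oZ U_2U_4^l}$ is still nonzero. One way would be to prove that the effective normalizations, not the raw prolonged coefficients, are carried into one another by the $U_4$-shift. Then the level-$l$ elimination would reproduce the $l=0$ one.
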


At this point, $\alpha^4_{U_4}$ is the only Maurer--Cartan form which is not yet normalized. We also emphasize that we already finished constructing the {\it complete} normal form of Branch A$^\prime$, introduced in the main Theorem \ref{main-result}. 

Meanwhile, the yet unused equation \eqref{rec-rel-ord-4-2-eq2} exhibits $V^4_{Z^2\oZ U_1}$ now as a relative differential invariant. Then, we have to build two subbranches in Branch {\bf A$^\prime$}:

\begin{description}
  \item[Branch A$^\prime$-1] $V^4_{Z^2\oZ U_1}\neq 0$,
  \item[Branch A$^\prime$-2] $V^4_{Z^2\oZ U_1} = 0$.
\end{description}

Let us proceed along these two subbranches.

\subsection{Branch A$^\prime$-1}

We now assume that $V^4_{Z^2\oZ U_1}$ is nonzero around the origin point. Then, in this case
\[
\|V^4_{Z^2\oZ U_1}\| = \sqrt{V^4_{Z^2\oZ U_1}V^4_{Z \oZ{}^2 U_1}} \neq 0.
\]
From the recurrence relation \eqref{rec-rel-ord-4-2-eq2} and its complex conjugate, we deduce also that
\[
\dt(\|V^4_{Z^2\oZ U_1}\|) \equiv -\frac{1}{8} \|V^4_{Z^2\oZ U_1}\| \,\alpha^4_{U_4},
\]
which allows to normalize $\alpha^4_{U_4}$ by setting $\|V^4_{Z^2\oZ U_1}\| =1$.

Now, we have normalized all the Maurer--Cartan forms of the holomorphic pseudo-group $\G$. It results the uniqueness --- up to a discrete isotropy group --- of the already constructed normal form in Branch {\bf A$^\prime$-1}.

\begin{Theorem}\label{Theorem-A-1}
Let $M\subset\mathbb C^4$ be a $6$-dimensional totally nondegenerate manifold belonging to Branch {\bf A$^\prime$-1}. Then there exists a transformation of $\mathbb{C}^4$ locally mapping $M$ to the normal form\footnote{In the normal form power series \eqref{normal form}, we set $V^r=v^r$, $Z^*=z$, $\oZ^*=\oz$, and $U^*=u$ to match the notation of the defining equations \eqref{def-eq}.}
\begin{align}
v^1&=z\oz+\sum_{j+k+|\ell|\geq 5} \frac{V^1_{Z^j\oZ{}^k U^\ell}}{j! k! \ell!} z^j\oz{}^k u^\ell,\nonumber
\\
v^2&=\frac{1}{2}\,(z^2\oz+z\oz{}^2)+\sum_{j+k+|\ell | \geq 5} \frac{V^2_{Z^j\oZ{}^k U^\ell}}{j! k! \ell!} z^j\oz{}^k u^\ell, \label{NM-Branch-A-1-NF}
\\
v^3&=-\frac{\rm i}{2}\,(z^2\oz-z\oz^2)+\sum_{j+k+|\ell |\geq 5} \frac{V^3_{Z^j\oZ{}^k U^\ell}}{j! k! \ell!} z^j\oz{}^k u^\ell,\nonumber
\\
v^4&=\frac{\rm i}{6} \, (z^3\oz- z\oz{}^3)+\bb \,z^2\oz{}^2 +\frac{\exp(\i \theta)}{2} z^2\oz u_1+\frac{\exp(-\i\theta)}{2} z\oz{}^2 u_1\nonumber\\
&\hspace{4.5cm}
+\sum_{j+k+|\ell |\geq 5} \frac{V^4_{Z^j\oZ{}^k U^\ell}}{j! k! \ell!} z^j\oz{}^k u^\ell,\nonumber
\end{align}
where $\theta:=\arg(V^4_{Z^2\oZ U_1})$ and the coefficients $V_J$ of order $|J| \geq 5$ satisfy
\eqref{partial-cross-sec} together with
\begin{multline}
\label{cross-sec-A-1-+}
0= V^4_{Z^3\oZ U_3^k U_4^l} = V^3_{Z^3\oZ U_4^l} = V^3_{Z^3\oZ{}U_1U_4^l}=\im V^2_{Z^3\oZ{}^2 U_4^l}  \\
= \im V^4_{Z^2\oZ U_2 U_4^l} = V^1_{Z^2\oZ{}^2 U_2 U_3^j U_4^l} = \re V^3_{Z^3\oZ U_2 U_4^l},
\end{multline}
for $j, l \geq 0$.
\end{Theorem}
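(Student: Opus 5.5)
The plan is to assemble Theorem \ref{Theorem-A-1} from the normalizations already carried out. No new computation of invariants is needed. The key observation is that in Branch {\bf A$^\prime$-1} every Maurer--Cartan form in the basis \eqref{Basis-MC-original} has been normalized. Lemmas \ref{lem-ord-1}--\ref{lem-ord-4} handle the bulk. Lemmas \ref{lem-ord-4-Branch-A-1}, \ref{lem-ord-4-Branch-A-2}, \ref{lem-A-1-ord-5} and \ref{lem-Branch-A-alpha1-U4} reduce the remaining list \eqref{MC-4-B-A} to $\alpha^4_{U_4}$ alone. This last form is normalized by $\|V^4_{Z^2\oZ U_1}\|=1$. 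Hence the normalization equations \eqref{norm eq} defining the cross-section, namely \eqref{partial-cross-sec}, \eqref{cross-sec-A-1-+}, $V^4_{Z^3\oZ}=\i$, $V^3_{Z^3\oZ}=0$ and $\|V^4_{Z^2\oZ U_1}\|=1$, determine a genuine (not partial) equivariant moving frame on $\hB\ii$. By the general theory of \cite{Olver-Fels-99,Olver-Pohjanpelto-08}, this frame is a pseudo-group jet $\jet_\infty\varphi|_{\bz}$ sending the jet of $M$ at the base point into the cross-section, uniquely up to the discrete isotropy (e.g. the choice $V^4_{Z^3\oZ}=\pm\i$ and the permutation $Z\leftrightarrow\oZ$ already fixed).

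First I would verify that the cross-section is compatible, i.e. that each normalization is legitimate. At each stage the recurrence relation \eqref{rec-formula} of the chosen phantom invariant must contain the corresponding Maurer--Cartan form with an invertible (nonvanishing) coefficient, modulo forms already normalized. This is exactly the content of the quoted lemmas. The conditions $\Delta\neq0$ and $V^1_{Z\oZ}\neq0$ from total nondegeneracy make the order two and three normalizations possible. The branch hypotheses $V^4_{Z^3\oZ}\neq 0$, $\bb\neq0$ and $V^4_{Z^2\oZ U_1}\neq0$ make the order four and five normalizations possible. I would also check that the scaled constants match: $V^4_{Z^3\oZ}=\i$ yields the term $\frac{\i}{6}z^3\oz$, while its conjugate gives $-\frac{\i}{6}z\oz^3$. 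The invariant $V^4_{Z^2\oZ^2}=4\bb$ gives $\bb z^2\oz^2$. Finally, $V^4_{Z^2\oZ U_1}=e^{\i\theta}$ gives $\frac{e^{\i\theta}}{2}z^2\oz u_1$.

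Second, I would read off the normal form by substituting the normalized lifted invariants into the Taylor expansion \eqref{normal form} centered at the origin. Phantom invariants vanish by the cross-section. All coefficients of order $\leq4$ are then fixed as displayed in \eqref{NM-Branch-A-1-NF}: the order two and three terms come from \eqref{partial-cross-sec}, and the order four ones come from the lemmas plus $\re V^4_{Z^2\oZ U_2}=\im V^4_{Z^2\oZ U_2}=0$. The order $\geq5$ coefficients are the non-phantom invariants, constrained only by \eqref{partial-cross-sec} and \eqref{cross-sec-A-1-+}. The remaining order four invariants ($V^4_{Z^2\oZ^2}$, $\arg V^4_{Z^2\oZ U_1}$) survive as the constants $\bb,\theta$.

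The main obstacle is justifying the passage from the formal/symbolic level to an actual transformation. Solving the normalization equations order by order yields a formal power series map. Showing it is a genuine local biholomorphism requires convergence, which the paper defers to Section \ref{sec-convergence} via the involutivity criterion of \cite{OSV-preprint}. For this theorem I would state existence formally, or cite that section for convergence. I would also double-check that the prolonged normalizations in Lemma \ref{lem-A-1-ord-5} do not conflict with $\alpha^1_{U_4^{l+1}}$ being normalized via Lemma \ref{lem-Branch-A-alpha1-U4}. Any such conflict would appear as a coefficient ${\sf C}_i\alpha^1_{U_4}$ that is harmless because $\alpha^1_{U_4}$ is solved independently.
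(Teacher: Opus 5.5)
Your proposal is correct and follows essentially the same route as the paper. Once Lemmas \ref{lem-ord-1}--\ref{lem-Branch-A-alpha1-U4} leave only $\alpha^4_{U_4}$, the recurrence \eqref{rec-rel-ord-4-2-eq2} shows that $\|V^4_{Z^2\oZ U_1}\|$ is a nonvanishing relative invariant; normalizing it to $1$ completes the moving frame, and the normal form is then read off from the cross-section, with $\bb$ and $\theta=\arg V^4_{Z^2\oZ U_1}$ surviving as invariants and convergence handled separately in Section \ref{sec-convergence}, as you note. One cosmetic slip: the sign choice $V^4_{Z^3\oZ}=\i$ and the swap $Z\leftrightarrow\oZ$ were fixed before this branch, so they are not examples of the discrete isotropy that remains after the frame is complete.
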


\subsection{Branch A$^\prime$-2}

We now assume that $V^4_{Z^2\oZ U_1} = 0$. In this case, the recurrence equation \eqref{rec-rel-ord-4-2-eq2} of this invariant no longer provides a direct normalization. However, it results in a homogeneous system formed by its coefficients of the independent horizontal forms $\omega^Z, \omega^{\oZ}, \omega^1, \ldots, \omega^4$. The solution of this system expresses $V^4_{Z^3\oZ U_1}$, $V^4_{Z^2\oZ^2 U_1}$ and $V^4_{Z^2\oZ U_1U_j}$ in terms of other lifted invariants of the same order. Among these relations, we have in particular that
\begin{align}
V^4_{Z^2\oZ U_1U_4} &= 0,\notag \\
V^4_{Z^2\oZ U_1 U_3} &= \frac{1}{12}\,(\i + 16\,\bb)\,V^3_{Z\oZ{}^3U_3} + \frac{1}{12}\,(\i + 4\,\bb + 48\i \bb^2) V^3_{Z^3\oZ U_3},\notag \\
V^4_{Z^2\oZ U_1U_2} &= (4\i \bb^2-\bb)\,V^3_{Z^3\oZ{} U_2} + \frac{1}{4}\,(\i + \bb) \,V^3_{Z^2\oZ{}^2U_2},\notag \\
V^4_{Z^2\oZ U_1^2} &= \frac{1}{4}\,(1+4\i \bb)\,V^2_{Z^2\oZ{}^2U_1}+\frac{1}{4}\,(\i + 4\,\bb)\,V^3_{Z^2\oZ{}^2U_1}.\label{A-1p constraints}
\end{align}

Since at this stage no additional fourth order lifted invariant is available for normalizing the Maurer-Cartan form $\alpha^4_{U_4}$, we must advance to order five where, we find the following set of unconsidered lifted differential invariants:
\begin{multline*}
\mathcal{R}_\textbf{A$^\prime$-2} = \big\{V^1_{Z^3\oZ{}^2}, V^1_{Z^2\oZ{}^2U_1}, V^2_{Z^4\oZ}, \re V^2_{Z^3\oZ{}^2}, V^2_{Z^2\oZ{}^2U_1}, V^3_{Z^4\oZ}, V^3_{Z^3\oZ{}^2}, \im V^3_{Z^3\oZ U_2}, V^3_{Z^3\oZ U_3}, V^3_{Z^2\oZ{}^2U_1}, \\
 V^3_{Z^2\oZ{}^2U_2}, V^3_{Z^2\oZ U_1^2}, V^4_{Z^4\oZ}, V^4_{Z^3\oZ{}^2}, V^4_{Z^3\oZ U_2}, \im V^4_{Z^2\oZ U_2U_2}, \im V^4_{Z^2\oZ U_2U_3}, V^4_{Z^2\oZ{}^2U_2}, V^4_{Z^2\oZ{}^2 U_3}, V^4_{Z^2\oZ{}^2 U_4}\big\},
\end{multline*}
 with the following recurrence relations
\begin{align}
&\dt V^1_{Z^3\oZ{}^2}\equiv -\frac{3}{4} V^1_{Z^3\oZ{}^2} \, \alpha^4_{U_4},&
\qquad&
\dt V^1_{Z^2\oZ{}^2 U_1}\equiv -V^1_{Z^2\oZ{}^2U_1}\,\alpha^4_{U_4}, \notag \\
&\dt V^2_{Z^4\oZ}\equiv -\frac{1}{2} V^2_{Z^4\oZ}\,\alpha^4_{U_4}, & &
\dt (\re V^2_{Z^3\oZ{}^2})\equiv-\frac{1}{2} \re V^2_{Z^3\oZ{}^2}\,\alpha^4_{U_4}, \notag \\
&\dt V^2_{Z^2\oZ{}^2U_1}\equiv -\frac{3}{4}\, V^2_{Z^2\oZ{}^2U_1} \,\alpha^4_{U_4},& &
\dt V^3_{Z^4\oZ}\equiv -\frac{1}{2}\,V^3_{Z^4\oZ}\,\alpha^4_{U_4}, \notag \\
&\dt V^3_{Z^3\oZ{}^2}\equiv -\frac{1}{2}\,V^3_{Z^3\oZ{}^2}\,\alpha^4_{U_4},& &
\dt(\im V^3_{Z^3\oZ U_2})\equiv - \im V^3_{Z^3\oZ U_2}\,\alpha^4_{U_4}, \label{Rec-rel-ord-5-A-2} \\
&\dt V^3_{Z^3\oZ U_3}\equiv -V^3_{Z^3\oZ U_3}\,\alpha^4_{U_4},& &
\dt V^3_{Z^2\oZ{}^2 U_1}\equiv -\frac{3}{4}\, V^3_{Z^2\oZ{}^2 U_1}\,\alpha^4_{U_4}, \notag \\
& \dt V^3_{Z^2\oZ{}^2 U_2}\equiv -V^3_{Z^2\oZ{}^2 U_2}\,\alpha^4_{U_4},& &
\dt V^3_{Z^2\oZ U_1^2}\equiv -V^3_{Z^2\oZ U_1^2}\,\alpha^4_{U_4}, \notag \\
&\dt V^4_{Z^4\oZ}\equiv -\frac{1}{4}\,V^4_{Z^4\oZ}\,\alpha^4_{U_4},& &
\dt V^4_{Z^3\oZ{}^2}\equiv -\frac{1}{4}\,V^4_{Z^3\oZ{}^2}\,\alpha^4_{U_4}, \notag \\
&\dt V^4_{Z^2\oZ{}^2 U_2}\equiv -\frac{3}{4}\,V^4_{Z^2\oZ{}^2 U_2}\,\alpha^4_{U_4},& &
\dt V^4_{Z^2\oZ{}^2 U_3}\equiv -\frac{3}{4}\,V^4_{Z^2\oZ{}^2 U_3}\,\alpha^4_{U_4}, \notag \\
&\dt V^4_{Z^2\oZ{}^2 U_4}\equiv -V^4_{Z^2\oZ{}^2 U_4}\,\alpha^4_{U_4}, & &
\dt V^4_{Z^3\oZ{} U_2}\equiv -\frac{3}{4}\,V^4_{Z^3\oZ{} U_2}\,\alpha^4_{U_4},\notag \\
&\dt (\im V^4_{Z^2\oZ{} U_2U_2})\equiv -\frac{5}{4}\,\im V^4_{Z^2\oZ{} U_2U_2}\,\alpha^4_{U_4}, & &
\dt (\im V^4_{Z^2\oZ{} U_2U_3})\equiv -\frac{5}{4}\,\im V^4_{Z^2\oZ{} U_2U_3}\,\alpha^4_{U_4}. \notag
\end{align}

In view of these relations, any {\it relative} invariant $R\in\mathcal R_\textbf{A$^\prime$-2}$ may serve to normalize the only remaining Maurer-Cartan form $\alpha^4_{U_4}$ provided $R\neq 0$. Then, we anticipate two further subbranches of Branch {\bf A$^\prime$-2} for the forthcoming normalizations:

\begin{description}
  \item[Branch A$^\prime$-2-1] If either of the lifted invariants in $\mathcal R_\textbf{A$^\prime$-2}$ is nonzero.
  \item[Branch A$^\prime$-2-2] If all members of $\mathcal R_\textbf{A$^\prime$-2}$ vanish, locally.
\end{description}

\subsection{Branch A$^\prime$-2-1}

Let $R\in\mathcal R_\textbf{A$^\prime$-2}$ be a nonzero lifted invariant. By the recurrence relation of $\| R \|^2=R\cdot\overline R$, it is easy to verify that $\| R \|$ is a nonzero relative invariant, as well. We set
\[
\| R \|=1
\]
and solve the corresponding recurrence relation $\dt (\|R\|)$ for normalizing $\alpha^4_{U_4}$. Then in this case, we receive a complete normal form which corresponds to a unique normal form.

\begin{Theorem}
\label{Theorem-A-2-1}
Let $M\subset\mathbb C^4$ be a $6$-dimensional totally nondegenerate manifold belonging to Branch {\bf A$^\prime$-2-1}. There exists an origin-preserving transformation of $\mathbb{C}^5$, mapping $M$ to the normal form
\begin{equation*}
\aligned
v^1&=z\oz+\sum_{j+k+|\ell |\geq 5} \frac{V^1_{Z^j\oZ{}^k U^\ell}}{j! k! \ell!} z^j\oz^k u^\ell,
\\
v^2&=\frac{1}{2}\,(z^2\oz+z\oz^2)+\sum_{j+k+|\ell |\geq 5} \frac{V^2_{Z^j\oZ{}^k U^\ell}}{j! k! \ell!} z^j\oz^k u^\ell,
\\
v^3&=-\frac{\rm i}{2}\,(z^2\oz-z\oz^2)+\sum_{j+k+|\ell |\geq 5} \frac{V^3_{Z^j\oZ{}^k U^\ell}}{j! k! \ell!} z^j\oz^k u^\ell,
\\
v^4&=\frac{\rm i}{6} \, (z^3\oz- z\oz^3)+\bb \,z^2\oz^2+\sum_{j+k+|\ell |\geq 5} \frac{V^4_{Z^j\oZ{}^k U^\ell}}{j! k! \ell!} z^j\oz^k u^\ell,
\endaligned
\end{equation*}
where the coefficients $V_J$ and their conjugations satisfy the normalizations \eqref{partial-cross-sec} and \eqref{cross-sec-A-1-+}, along with
\begin{gather*}
\| R \| = 1 \qquad \text{for some non-zero relative invariant $R\in \mathcal{R}_\textbf{A$^\prime$-2}$},
\end{gather*}
and supplemented with the relations \eqref{A-1p constraints} and their prolongations.  This normal form is unique up to a discrete group of transformations.
\end{Theorem}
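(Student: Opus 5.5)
The proof will assemble the cross-section built up over the preceding sections, specialized to Branch A$^\prime$-2-1, and then apply the general equivariant moving frame principle. If every Maurer--Cartan form of $\G$ has been normalized by a coordinate cross-section, the normal form transformation is unique up to the discrete stabilizer of the cross-section. Concretely, I would proceed in three steps.

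First, I would collect the normalizations. The order zero normalizations \eqref{order 0 normalizations} fix the base point, so the transformation is origin-preserving. Lemmas \ref{lem-ord-1}--\ref{lem-ord-4} give the partial cross-section \eqref{partial-cross-sec}. The Branch A choice $V^4_{Z^3\oZ}=\i$ together with Lemmas \ref{lem-ord-4-Branch-A-1} and \ref{lem-ord-4-Branch-A-2} give $V^4_{Z^3\oZ U_3^kU_4^l}=\i\delta^{k,l}_{0,0}$ and $V^3_{Z^3\oZ U_4^l}=0$. Lemma \ref{lem-A-1-ord-5} and Lemma \ref{lem-Branch-A-alpha1-U4} give the remaining relations in \eqref{cross-sec-A-1-+}.

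Reading off the Taylor coefficients of weight $\le 4$ then produces the displayed leading terms. Since $\dt V^4_{Z^2\oZ^2}\equiv 0$, the quantity $\bb=\frac14 V^4_{Z^2\oZ^2}$ is a genuine invariant and survives as a coefficient. The factor $\frac{\i}{6}(z^3\oz-z\oz^3)$ comes from $V^4_{Z^3\oZ}/3!=\i/6$. The branch hypothesis $V^4_{Z^2\oZ U_1}=0$ kills the $z^2\oz u_1$ terms. The imaginary part $\im V^4_{Z^2\oZ U_2}$ vanishes by Lemma \ref{lem-Branch-A-alpha1-U4}, and the real parts $\re V^4_{Z^2\oZ U_2^{j+1}\cdots}$ vanish by Lemma \ref{lem-ord-3}. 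The vanishing of $V^4_{Z^2\oZ U_1}$ along with all its derivatives forces the syzygies \eqref{A-1p constraints} and their prolongations. These are obtained by differentiating the identity $\dt V^4_{Z^2\oZ U_1}=0$ and equating the coefficients of the horizontal coframe, as described in Branch A$^\prime$-2.

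Second, I would normalize the last Maurer--Cartan form. By the list after Lemma \ref{lem-A-1-ord-5} and by Lemma \ref{lem-Branch-A-alpha1-U4}, the only form left free is $\alpha^4_{U_4}$. For the chosen nonzero $R\in\mathcal R_{\textbf{A$^\prime$-2}}$, the relations \eqref{Rec-rel-ord-5-A-2} give $\dt R\equiv -c_R R\,\alpha^4_{U_4}$ with $c_R>0$. Hence $\dt\|R\|\equiv -c_R\|R\|\alpha^4_{U_4}$, and setting $\|R\|=1$ solves for $\alpha^4_{U_4}$.

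Third, I would establish existence and uniqueness. Since all pseudo-group jets are now normalized, the normalization equations determine the pseudo-group jets $\jet_\infty\varphi|_0$ order by order in terms of $\bz^{(\infty)}$, up to finitely many discrete choices. These discrete choices are the sign choices hidden in the "dilation by $a+\i b$ with $b\ge0$" step, the permutation $Z\leftrightarrow\oZ$, and the solution set of $\|R\|=1$ under the real one-parameter action generated by $\alpha^4_{U_4}$, which acts by positive scaling. The solved jets are the formal Taylor coefficients of a transformation mapping $M$ to the stated form. Convergence of this transformation is deferred to Section \ref{sec-convergence}, so here the claim is formal or holomorphic as in the paper's convention. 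Uniqueness up to a discrete group follows because the isotropy of the cross-section has trivial Lie algebra: every infinitesimal generator preserving the normal form must annihilate every Maurer--Cartan form.

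The main obstacle I expect is the bookkeeping in the first step. One has to check that the prolonged lemmas really hold in Branch A$^\prime$-2-1 without hidden degeneracy, in particular that the leading coefficients used in Lemmas \ref{lem-A-1-ord-5} and \ref{lem-Branch-A-alpha1-U4} (such as $-\bb^2$ and $-2$) remain nonzero after imposing $V^4_{Z^2\oZ U_1}\equiv0$. The second delicate point is that the constraints \eqref{A-1p constraints} are the only additional relations among the coefficients. I would verify both by induction on order, as in the proof of Lemma \ref{lem-ord-2}, tracking only the leading Maurer--Cartan jet in each prolonged recurrence relation.
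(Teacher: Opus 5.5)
Your proposal is correct and takes the paper's route. You assemble \eqref{partial-cross-sec} and \eqref{cross-sec-A-1-+} from the preceding lemmas, read off \eqref{A-1p constraints} from the horizontal coefficients of $\dt V^4_{Z^2\oZ U_1}=0$, and use the scaling $\dt R\equiv -c_R R\,\alpha^4_{U_4}$ from \eqref{Rec-rel-ord-5-A-2} to normalize the last free form $\alpha^4_{U_4}$ via $\|R\|=1$, so the stabilizer of the cross-section has trivial Lie algebra. One minor slip in your list of discrete ambiguities: $Z\leftrightarrow\oZ$ is not holomorphic, so it is not among them; a genuine element is the linear map $z\mapsto -z$, $w^2\mapsto -w^2$, $w^3\mapsto -w^3$, which preserves every normalization, and this does not affect your argument.
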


\subsection{Branch A$^\prime$-2-2}
\label{sub-sec-A-p-2-2}

Now let us assume that all order five lifted differential invariants --- and in particular those belonging to the set $\mathcal R_\textbf{A$^\prime$-2}$ --- vanish, identically. In that case, we shall seek any possible normalization of $\alpha^4_{U_4}$ in the next order six. As expected, this order presents a plenty of yet unconsidered lifted invariants. However, surprisingly, solving the homogeneous system formed by the coefficients of the horizontal forms in the recurrence relations \eqref{Rec-rel-ord-5-A-2} reveals that except
\begin{equation}\label{rel-ord-6-A-2-2}
\aligned
&  V^1_{Z^2\oZ^2U_2U_2}, \qquad V^4_{Z^2\oZ U_1U_2U_2}, \qquad V^4_{Z^2\oZ U_1U_2U_3}, \qquad V^4_{Z^2\oZ U_1U_3U_3},
\\
& \im V^4_{Z^2\oZ U_2U_2U_2}, \qquad V^4_{Z^2\oZ^2U_1U_4}, \qquad \im V^4_{Z^3\oZ U_1U_4},
\endaligned
\end{equation}
all other sixth order lifted invariants vanish, identically. Among these seven lifted differential invariants and for some technical reason --- that we will find it soon --- we first examine the recurrence relation of $\im V^4_{Z^3\oZ U_1U_4}$:
\begin{equation*}
\aligned
\dt (\im V^4_{Z^3\oZ U_1U_4})&\equiv\frac{3}{2}\,\im V^4_{Z^3\oZ U_1U_4}\,\alpha^4_{U_4}.
\endaligned
\end{equation*}
This relation reveals that the lifted invariant $\im V^4_{Z^3\oZ U_1U_4}$ is relative and, accordingly, we have to build once more the following two subbranches

\begin{description}
  \item[Branch A$^\prime$-2-2-1] If the real lifted differential invariant $\im V^4_{Z^3\oZ U_1U_4}$ is nonzero.
  \item[Branch A$^\prime$-2-2-2] If $\im V^4_{Z^3\oZ U_1U_4} = 0$.
\end{description}

In the first subbranch {\bf A$^\prime$-2-2-1}, one clearly is permitted to set $\|\im V^4_{Z^3\oZ U_1U_4}\|=1$ and normalize the last Maurer-Cartan form $\alpha^4_{U_4}$.

\begin{Theorem}
\label{Theorem-A-2-2-1}
Let $M\subset\mathbb C^4$ be a $6$-dimensional totally nondegenerate manifold belonging to Branch {\bf A$^\prime$-2-2-1}. Then there exists an origin-preserving transformation of $\mathbb{C}^5$, mapping it to the normal form
\begin{equation*}
\begin{aligned}
v^1&=z\oz+\sum_{j+k+|\ell |\geq 6} \frac{V^1_{Z^j\oZ{}^k U^\ell}}{j! k! \ell!} z^j\oz^k u^\ell,
\\
v^2&=\frac{1}{2}\,(z^2\oz+z\oz^2)+\sum_{j+k+|\ell |\geq 7} \frac{V^2_{Z^j\oZ{}^k U^\ell}}{j! k! \ell!} z^j\oz^k u^\ell,
\\
v^3&=-\frac{\rm i}{2}\,(z^2\oz-z\oz^2)+\sum_{j+k+|\ell |\geq 7} \frac{V^3_{Z^j\oZ{}^k U^\ell}}{j! k! \ell!} z^j\oz^k u^\ell,
\\
v^4&=\frac{\rm i}{6} \, (z^3\oz- z\oz^3)+\bb \,z^2\oz^2+\sum_{j+k+|\ell |\geq 6} \frac{V^4_{Z^j\oZ{}^k U^\ell}}{j! k! \ell!} z^j\oz^k u^\ell,
\end{aligned}
\end{equation*}
where the coefficients $V_J$ and their conjugations satisfy the normalizations \eqref{partial-cross-sec} and \eqref{cross-sec-A-1-+} supplemented by $\|\im V^4_{Z^3\oZ U_1U_4}\|=1$. Moreover, all lifted invariants $V^1_J$ and $V^4_J$ with $|J|=6$ vanish except those presented in \eqref{rel-ord-6-A-2-2}. This normal form is unique up to a discrete transformations group.
\end{Theorem}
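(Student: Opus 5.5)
Theorem \ref{Theorem-A-2-2-1} is obtained by assembling the normalizations already made in Branch {\bf A$^\prime$} and then using the one remaining relative invariant to fix the last group parameter. The overall logic is that of an equivariant moving frame: a complete cross-section produces a unique normal form up to the discrete stabilizer.

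First, I collect the cross-section. It consists of:
\begin{itemize}
\item the order-zero conditions \eqref{order 0 normalizations};
\item the phantom invariants listed in \eqref{partial-cross-sec}, which come from Lemmas \ref{lem-ord-1}--\ref{lem-ord-4};
\item the Branch {\bf A} normalizations of Lemmas \ref{lem-ord-4-Branch-A-1}, \ref{lem-ord-4-Branch-A-2} and \ref{lem-A-1-ord-5}, together with Lemma \ref{lem-Branch-A-alpha1-U4}; these make up \eqref{cross-sec-A-1-+}.
\end{itemize}
After these, every Maurer--Cartan form in the basis \eqref{Basis-MC-original} is solved for except $\alpha^4_{U_4}$.

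Second, I use the branch hypothesis. The recurrence relation $\dt(\im V^4_{Z^3\oZ U_1U_4})\equiv \frac32\,\im V^4_{Z^3\oZ U_1U_4}\,\alpha^4_{U_4}$ shows that this real invariant is relative, so its nonvanishing is intrinsic. In Branch {\bf A$^\prime$-2-2-1} we may therefore impose $\|\im V^4_{Z^3\oZ U_1U_4}\|=1$, and the relation then gives $\alpha^4_{U_4}\equiv 0$ modulo the horizontal coframe. Now all Maurer--Cartan forms are normalized, and the partial moving frame becomes a genuine equivariant moving frame. The residual freedom is only the sign choice in $\im V^4_{Z^3\oZ U_1U_4}=\pm1$ and discrete symmetries such as $Z\leftrightarrow\oZ$ already used. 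By the standard correspondence between cross-sections and normal forms, this gives existence of the normalizing transformation and uniqueness up to a discrete group.

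Third, I justify the displayed shape of the series, with nothing below order $6$ in $v^1,v^4$ and nothing below order $7$ in $v^2,v^3$. The branch assumption {\bf A$^\prime$-2-2} says all fifth-order lifted invariants vanish. This includes the set $\mathcal R_{\bf A^\prime\text{-}2}$ and, since $V^4_{Z^2\oZ U_1}=0$, also the constraints \eqref{A-1p constraints}.

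At order six, I write each recurrence relation \eqref{Rec-rel-ord-5-A-2} for a vanishing fifth-order invariant. Its left side is zero, and its right side contains the coefficients of $\omega^Z,\omega^{\oZ},\omega^1,\dots,\omega^4$. These coefficients are sixth-order invariants, plus lower-order products that vanish. Setting each coefficient to zero gives a homogeneous linear system in the sixth-order invariants. Solving it leaves only the seven quantities \eqref{rel-ord-6-A-2-2}, and all of these belong to $v^1$ or $v^4$. Hence $V^2_J=V^3_J=0$ for $|J|=6$, which yields the stated gap in the second and third equations.

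The main obstacle is this last step: the symbolic linear algebra at order six. It requires tracking, through the prolongation formula \eqref{eq: prolongation formula 2}, exactly which sixth-order jets appear in each horizontal coefficient, and checking that the resulting system has full rank on all the $V^2,V^3$ components. I would carry this out by organizing the unknowns by weight and by the index $r$, and by exploiting conjugation symmetry $V^r_{\overline J}=\overline{V^r_J}$. The remaining steps are routine bookkeeping.
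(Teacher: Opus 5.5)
Your proposal is correct and follows essentially the same route as the paper. The cross-section \eqref{partial-cross-sec}--\eqref{cross-sec-A-1-+} leaves only $\alpha^4_{U_4}$ unnormalized, and the relative invariant $\im V^4_{Z^3\oZ U_1U_4}$ fixes it via $\|\im V^4_{Z^3\oZ U_1U_4}\|=1$. The shape of the series comes from the vanishing of all fifth-order invariants together with the order-six linear system read off from the horizontal coefficients of \eqref{Rec-rel-ord-5-A-2}; the paper likewise only asserts the outcome of that system rather than displaying it.

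One minor correction: $\alpha^4_{U_4}$ acts on $\im V^4_{Z^3\oZ U_1U_4}$ by positive rescaling, so the sign of this invariant is an invariant of $M$ (like $\epsilon$ in Branch {\bf A$^{\prime\prime}$-2}), not a residual normalization freedom.
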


But in Branch {\bf A$^\prime$-2-2-2}, we show that the normalization of $\alpha^4_{U_4}$ is impossible, even at higher orders. For this purpose, we invoke Cartan's principles in his classical theory of equivalence problems. Before it, let us emphasize that following the vanishing all order five lifted invariants, we have derived $\dt \bb=0$ indicating that $\bb$ is now a {\it constant} function.

Granting the normalizations $0=V^j_Z=V^j_{\oZ}=V^j_{U_k}$ we set for $j, k=1,\ldots, 4$, the structure equations of the horizontal {\it lifted} coframe of $M$ are (see \cite[Theorem 2.2]{Valiquette-11} for the formula)
\begin{equation}\label{struc-equ}
\begin{aligned}
\dt\omega^Z &= \mu_Z\wedge\omega^Z+ \sum_{s=1}^4 \mu_{U_s}\wedge\omega^s,\qquad
\dt\omega^{\oZ}=\overline{d\omega^Z},
\\
\dt \omega^r &= \alpha^r_Z\wedge\omega^Z+\alpha^r_{\oZ}\wedge\omega^{\oZ}+ \sum_{s=1}^4 \alpha^r_{U_s}\wedge\omega^s, \qquad r=1,\ldots,4.
\end{aligned}
\end{equation}
By vanishing all the lifted invariants in order five along with those vanished in order six at the beginning of this subsection \ref{sub-sec-A-p-2-2}, the normalized Maurer--Cartan forms visible among \eqref{struc-equ} take the form
\begin{gather*}
\mu_Z=\frac{1}{4}\,\alpha^4_{U_4}, \qquad \alpha^1_{Z}=-{\rm i}\,\omega^{\oZ}, \qquad \alpha^1_{\oZ}={\rm i}\,\omega^{Z}, \qquad \alpha^1_{U_1}=\frac{1}{2}\,\alpha^4_{U_4},
\\
\alpha^2_{U_1}=-(\omega^Z+\omega^{\oZ}), \qquad \alpha^2_{U_2}=\frac{3}{4}\,\alpha^4_{U_4}, \qquad \alpha^3_{U_1}={\rm i}\,(\omega^Z-\omega^{\oZ}), \qquad \alpha^3_{U_3}=\frac{3}{4}\,\alpha^4_{U_4},
\\
\alpha^4_{U_2}=-\frac{1}{2}\,(4\,\bb+{\rm i})\,\omega^Z-\frac{1}{2}\,(4\,\bb-{\rm i})\,\omega^{\oZ}, \qquad
\alpha^4_{U_3}=\frac{1}{2}\,(1+4{\rm i}\,\bb)\,\omega^Z+\frac{1}{2}\,(1-4{\rm i}\,\bb)\,\omega^{\oZ}.
\end{gather*}
Inserting these expressions into the structure equations \eqref{struc-equ} gives rise to
\begin{equation}\label{struc-eq-A-2-2-2}
\aligned
\dt\omega^Z &= \frac{1}{4}\,\alpha^4_{U_4}\wedge\omega^Z, \qquad \qquad\qquad d\omega^{\oZ}=\frac{1}{4}\,\alpha^4_{U_4}\wedge\omega^{\oZ},
\\
\dt\omega^1 &= 2\i \omega^Z\wedge\omega^{\oZ}+\frac{1}{2}\,\alpha^4_{U_4}\wedge\omega^1,
\\
\dt \omega^2 &= -\omega^Z\wedge\omega^1-\omega^{\oZ}\wedge\omega^1+\frac{3}{4}\,\alpha^4_{U_4}\wedge\omega^2,
\\
\dt \omega^3 &= \i \omega^Z\wedge\omega^1 - \i\omega^{\oZ}\wedge\omega^1+\frac{3}{4}\,\alpha^4_{U_4}\wedge\omega^3,
\\
\dt \omega^4 &= -\frac{1}{2}\big(4\,\bb+\i\big)\,\omega^Z\wedge\omega^2 - \frac{1}{2}\big(4\,\bb-\i \big)\,\omega^{\oZ}\wedge\omega^2
\\
&\hspace{0.375cm}+\frac{1}{2}\big(1+4\i \bb\big)\,\omega^Z\wedge\omega^3 + \frac{1}{2}\big(1-4\i \bb\big)\,\omega^{\oZ}\wedge\omega^3 + \alpha^4_{U_4}\wedge\omega^4.
\endaligned
\end{equation}
These structure equations are of {\it constant type} and thus, no further normalizations are available on them. Accordingly, the equivalence problem to the manifolds $M$ in the current branch is identified by the above structure equations defined on the $7$-dimensional prolonged manifold $M\times\mathcal G^{\sf red}$ where $\mathcal G^{\sf red}$ denotes the $1$-dimensional reduced subgroup of $\mathcal G$ with the associated Maurer--Cartan form $\alpha^4_{U_4}$. Another application of the formula introduced in \cite[Theorem 2.2]{Valiquette-11} gives that
\[
\dt \alpha^4_{U_4}=\omega^Z\wedge\alpha^4_{ZU_4}+\omega^{\oZ}\wedge\alpha^4_{\oZ U_4}
+\alpha^4_Z\wedge\mu_{U_4} +\alpha^4_{\oZ}\wedge\overline\mu_{U_4}
+\sum_{s=1}^4 \bigg(\omega^s\wedge\alpha^4_{U_sU_4}
+\alpha^4_{U_s}\wedge\alpha^s_{U_4}-\alpha^4_{V^s}\wedge\alpha^s_{V^4}\bigg),
\]
where, after applying necessary computations, extensively simplifies to
\begin{equation}\label{dalpha4-U4}
\dt \alpha^4_{U_4}=0.
\end{equation}
It follows that the equivalence problem to manifolds $M$ belonging to Branch {\bf A$^\prime$-2-2-2} is encoded by the constant type structure equations \eqref{struc-eq-A-2-2-2}-\eqref{dalpha4-U4} on the prolonged space $M\times\mathcal G^{\sf red}$. This implies that two manifolds $M$ and $M'$ of this branch possess the same coefficient $\bb$ in their defining equations are holomorphically equivalent. As we know, Beloshapka's model surfaces $M(\frac{\rm i}{6}, \bb)$, $\bb\neq 0$, with the defining functions (cf. \eqref{def-eq-model})
\begin{equation}\label{model-A-2-2-2}
\aligned
v^1&=z\oz,
\\
v^2&=\frac{1}{2}\,(z^2\oz+z\oz^2),
\\
v^3&=-\frac{\rm i}{2}\,(z^2\oz-z\oz^2),
\\
v^4&=\frac{\rm i}{6}\, (z^3\oz- z\oz^3)+\bb \,z^2\oz^2,
\endaligned
\end{equation}
belong to this branch. Thus, we have
\begin{Theorem}
\label{Th-A-2-2-2}
For each $6$-dimensional totally nondegenerate manifold $M$ belonging to Branch {\bf A$^\prime$-2-2-2}, there exists some origin-preserving holomorphic transformation, mapping it to a model surface defined by \eqref{model-A-2-2-2}. The isotropy group of such surfaces is $1$-dimensional and the already mentioned transformation is unique up to the action of this isotropy group. Moreover, two manifolds belonging to this branch are equivalent if and only if their associated normal forms \eqref{model-A-2-2-2} admit the same structure equations \eqref{struc-eq-A-2-2-2}.
\end{Theorem}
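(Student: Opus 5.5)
The plan is to apply Cartan's equivalence theorem for coframes of constant type to the structure equations \eqref{struc-eq-A-2-2-2}--\eqref{dalpha4-U4}, and to use the model \eqref{model-A-2-2-2} as the reference object. Throughout, $\bb\neq 0$ is the real constant obtained from $\dt\bb=0$.

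First I will check that $M(\frac{\i}{6},\bb)$ belongs to Branch \textbf{A$^\prime$-2-2-2}. Its defining series already satisfies \eqref{partial-cross-sec} and \eqref{cross-sec-A-1-+}, since it has no terms of order $\geq 5$. Moreover $V^4_{Z^2\oZ U_1}=0$, every member of $\mathcal R_{\textbf{A$^\prime$-2}}$ vanishes, and $\im V^4_{Z^3\oZ U_1U_4}=0$. So its lifted coframe on the $7$-dimensional space $M\times\mathcal G^{\sf red}$ satisfies exactly \eqref{struc-eq-A-2-2-2} and \eqref{dalpha4-U4}, with the same $\bb$.

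Next, take an arbitrary $M$ in the branch. Its lifted coframe $(\omega^Z,\omega^{\oZ},\omega^1,\ldots,\omega^4,\alpha^4_{U_4})$ on $M\times\mathcal G^{\sf red}$ has constant structure coefficients. I will use the classical theorem on coframes with constant structure functions (structure equations of rank zero; cf.\ Cartan and Olver). It says that two such coframes with identical structure constants are locally equivalent. It also says that the equivalence can be chosen to send any given point to any other given point. Concretely, both coframes are Maurer--Cartan forms of the same $7$-dimensional local Lie group.

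The delicate step is to descend from an equivalence of lifted coframes to an actual origin-preserving biholomorphism of $\mathbb C^5$. For this I will use the equivariance of the partial moving frame. The lifted coframe is obtained by pulling back the invariantized coordinates $\bm\tau$ along $\hB$. A map intertwining the lifted coframes therefore commutes with the right action of $\mathcal G^{\sf red}$, because it preserves $\alpha^4_{U_4}$ and its dual vertical vector field. Hence it covers a diffeomorphism $M\to M(\frac{\i}{6},\bb)$. This diffeomorphism preserves the invariantized horizontal coframe, and so it preserves the CR data. By the correspondence between the moving frame and pseudo-group jets, it is the restriction of an element of $\mathcal G$.

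As a cross-check, which I would run in parallel, I will argue directly on the normal form. By the recurrence relations, with the structure fixed as above, all lifted invariants of order $\geq 5$ satisfy a linear homogeneous system whose coefficients involve only lower-order invariants. Inducting on the order from the vanishing at orders $5$ and $6$, every $V^r_J$ with $|J|\geq5$ vanishes. Thus the normal form of $M$ is literally \eqref{model-A-2-2-2}, and the normalizing transformation supplied by the moving frame (convergence being taken from Section \ref{sec-convergence}) maps $M$ to the model. I expect this induction, namely showing that no new nonzero invariant appears at order $\geq 7$, to be the main obstacle. I would try to handle it by showing that, once $\dt\alpha^4_{U_4}=0$ and the structure equations are constant, every recurrence relation $\dt V_J$ reduces to a relative-invariant equation $\dt V_J\equiv c_J V_J\,\alpha^4_{U_4}$ whose horizontal part forces $V_J=0$.

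For the isotropy, the symmetry group of the constant coframe is $7$-dimensional and acts simply transitively on $M\times\mathcal G^{\sf red}$. Its stabilizer of the origin in $M$ is therefore exactly the $1$-dimensional fibre $\mathcal G^{\sf red}$. The dilations \eqref{dilation} preserve \eqref{model-A-2-2-2}, since each term is weighted homogeneous, and they realize this fibre. Consequently two normalizing transformations differ by an element of this isotropy group, which gives the uniqueness statement. Finally, since equivalence of lifted coframes with constant structure is governed solely by the constants, here $\bb$, two manifolds of the branch are equivalent if and only if their structure equations \eqref{struc-eq-A-2-2-2} coincide.
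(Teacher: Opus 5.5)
Your main line is the paper's own argument. It uses the constant-type structure equations \eqref{struc-eq-A-2-2-2} and \eqref{dalpha4-U4} on $M\times\mathcal G^{\sf red}$, Cartan's theorem on coframes with constant structure coefficients, the fact that $M(\frac{\i}{6},\bb)$ lies in the branch, and the identification of the isotropy with the residual group $\mathcal G^{\sf red}$, realized by the dilation field \eqref{dilation-inf}. Your descent step merely spells out what the paper leaves under ``Cartan's classical results''.

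The normal-form induction you single out as the main obstacle is not needed for the theorem, and as sketched it would not work on its own. Relative invariance $\dt V_J\equiv c_J V_J\,\alpha^4_{U_4}$ never forces $V_J=0$; nonzero relative invariants are exactly what produce Branches \textbf{A$^\prime$-2-1} and \textbf{A$^\prime$-2-2-1}. The vanishing of all $V_J$ with $|J|\geq 5$ is a consequence of the equivalence with the model, not an independent route to it.
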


\begin{Remark}
Setting $-\bb$ instead of $\bb$ in \eqref{struc-eq-A-2-2-2}, one finds that the appearing structure equations of $M(\frac{\rm i}{6}, -\bb)$ turn back to those of $M(\frac{\rm i}{6}, \bb)$ once we apply the substitutions of the horizontal forms
\[
\omega^Z\leftrightarrow\omega^{\oZ}, \qquad \omega^1\rightarrow -\omega^1, \qquad \omega^2\rightarrow -\omega^2.
\]
Consequently, for each real integer $\bb$, the two model surfaces $M(\frac{\rm i}{6}, \bb)$ and $M(\frac{\rm i}{6}, -\bb)$ are biholomorphically equivalent. This result is also confirmed by the explicit expression of the real invariant $\mathfrak J$ introduced in \eqref{J}. Thus,  we can assume here $\bb> 0$.
\end{Remark}

According to \cite{Valiquette-SIGMA}, the isotropy group at the origin of the model surfaces \eqref{model-A-2-2-2} is $\mathcal G^{\sf red}$, identified with the single Maurer--Cartan form $\alpha^4_{U_4}$. The infinitesimal counterpart of this group is generated by the real parts of the holomorphic vector fields $A(z,w)\partial_z+B_j(z,w)\partial_{w_j}$, which are tangent to the model surface and vanish at the origin, \cite{BER, Beloshapka2004}. Applying necessary computations, one finds that this algebra  is generated by the real part of the {\it dilation} vector field (see \eqref{dilation} for its flow)
\begin{equation}\label{dilation-inf}
X=z\,\partial_z+2\,w_1\,\partial_{w_1}+3\,w_2\,\partial_{w_2}+3\,w_3\,\partial_{w_3}+4\,w_4\,\partial_{w_4}.
\end{equation}

\subsection{Branch A$^{\prime\prime}$}

Now we assume that the lifted invariant $\bb=\frac{1}{4} V^4_{Z^2\oZ^2}$ vanishes at the origin\footnote{We emphasize that, for the sake of generality, we do not assume here that $\bb$ {\it locally} vanishes in a neighborhood of the origin.}. In this case, the recurrence relation of $\im V^4_{Z^2\oZ U_2}$ in \eqref{rec-rel-ord-4-2-eq1} is not anymore of use to normalize $\alpha^1_{U_4}$ (cf. Lemma \ref{lem-Branch-A-alpha1-U4}). Nevertheless, we still have the opportunity of normalizing this Maurer-Cartan form in order five, where we have
\[
\dt V^2_{Z^4\oZ}\equiv -4\i \alpha^1_{U_4}-\frac{1}{2}\,V^2_{Z^4\oZ}\,\alpha^4_{U_4}.
\]
This recurrence equation permits us to solve its imaginary part for $\alpha^1_{U_4}$ after setting $\im V^2_{Z^4\oZ}=0$. More generally, we have

\begin{Lemma}
\label{lem-alpha1-U4-A'}
For every $l\geq 0$, one can normalize the Maurer--Cartan form $\alpha^1_{U_4^{l+1}}$ by setting $\im V^2_{Z^4\oZ U_4^l}=0.$
\end{Lemma}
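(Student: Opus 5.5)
The plan follows the proof pattern of Lemmas \ref{lem-ord-1} and \ref{lem-ord-2}: track the leading occurrence of the vector field jet $\eta^1_{u_4^{l+1}}$ in the prolonged coefficient $\phi^{2;z^4\oz u_4^l}$ and show that, after invariantization, the recurrence relation of $V^2_{Z^4\oZ U_4^l}$ has the form
\[
\dt V^2_{Z^4\oZ U_4^l}\equiv -4\i\,\alpha^1_{U_4^{l+1}}+\text{(terms in Maurer--Cartan forms already normalized, or in $\alpha^1_{U_4^{m}}$ with $m\leq l$, or in $\alpha^4_{U_4}$)}.
\]
The base case $l=0$ is the displayed order five relation $\dt V^2_{Z^4\oZ}\equiv -4\i\alpha^1_{U_4}-\frac12 V^2_{Z^4\oZ}\alpha^4_{U_4}$. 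Since $\alpha^1_{U_4^{l+1}}$ is real, taking imaginary parts with the phantom value $\im V^2_{Z^4\oZ U_4^l}=0$ yields $0\equiv -4\,\alpha^1_{U_4^{l+1}}+\cdots$. This can be solved, by induction on $l$, for $\alpha^1_{U_4^{l+1}}$ in terms of the horizontal coframe, lower-order normalized forms, and $\alpha^4_{U_4}$.

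To obtain the leading term, I use \eqref{eq: prolongation formula 2} to write $\phi^{2;z^4\oz u_4^l}=\Dt_{u_4}^l\phi^{2;z^4\oz}+\ldots$. The coefficient of $\alpha^1_{U_4}$ in $\phi^{2;z^4\oz}$ comes from the term $-\sum_s \Dt_{z^4\oz}(\eta^s v^2_{u_s})$, and more precisely from the contribution $-\sum_s(\Dt\text{-derivatives of }\eta^s)\,v^2_{\ldots u_s}$. Via $\eta^1_z=\i\phi^1_z$-type determining equations, the relevant $\eta^1$ derivatives along $z,\oz$ are tied to $\eta^1_{u_4}$ through the lower order normalizations $V^1_{Z\oZ}=1$, $V^4_{Z^3\oZ}=\i$, etc. This is exactly the mechanism that produced the constant $-4\i$ for $l=0$. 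Applying $\Dt_{u_4}^l$ raises every vector field jet by $u_4^l$. The top jet $\eta^1_{u_4^{l+1}}$ therefore appears with the same constant coefficient $-4\i$, because all additional terms produced by differentiating the jet coordinates carry lower $u_4$-order vector field jets.

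Next I check that no other unnormalized Maurer--Cartan form of order $l+1$ enters with a nonzero coefficient. The remaining forms after Lemma \ref{lem-A-1-ord-5} are only $\alpha^1_{U_4^{m}}$ and $\alpha^4_{U_4}$, so it suffices to see that the other terms involve $\alpha^1_{U_4^m}$ with $m\le l$. Those lower forms are already normalized by induction, and $\alpha^4_{U_4}$ is allowed to remain. The normalizations of $\mu_{U_4^{l+1}}$, $\alpha^4_{U_4^{l+2}}$ and $\alpha^{2,3}_{U_4^{l+1}}$ from Lemmas \ref{lem-ord-4-Branch-A-2} and \ref{lem-A-1-ord-5} are substituted, and these never reintroduce $\alpha^1_{U_4^{l+1}}$ with a compensating coefficient.

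The main obstacle is making sure the coefficient of $\alpha^1_{U_4^{l+1}}$ is \emph{exactly} the nonzero constant $-4\i$ and does not pick up a contribution like the $-\frac{15}{8}V^4_{Z^2\oZ^2}$ seen in \eqref{B-A-5-1}. Such a contribution could otherwise depend on invariants, for instance through $\bb$, which vanishes only at the origin here and not locally. I would handle this by evaluating at the origin, where $\bb=0$, together with an openness argument: the coefficient is $-4\i+O(\bb)$, so it stays nonzero nearby and the solvability persists locally. Any such term multiplies $\alpha^1_{U_4^{l+1}}$ by a real polynomial in the invariants, so this argument suffices.
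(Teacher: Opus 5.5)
\textbf{Gap: the inductive step rests on a false mechanism.} Your outline matches the paper's: read the base case off the displayed order-five relation for $V^2_{Z^4\oZ}$, prolong in $U_4$, and solve the imaginary part for the real form. The problem is how you justify the inductive step.

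The form $\alpha^1_{U_4}$ does not occur in $\iota(\phi^{2;z^4\oz})$ at all. The $\eta^1$-part of $\phi^{2;z^4\oz}$ consists of terms $(\Dt_K\eta^1)\,v^2_{z^a\oz^b u_1}$ with $K\neq 0$, $a+b\leq 4$, $b\leq 1$. Every such $V^2_{Z^a\oZ^bU_1}$ vanishes on the cross-section by Lemmas \ref{lem-ord-1}--\ref{lem-ord-4}. At level $l$, for the same reason, the only surviving $\eta^1$-terms come from $K=u_4^m$ with $1\leq m\leq l$. They give $\alpha^1_{U_4^m}\,V^2_{Z^4\oZ U_4^{l-m}U_1}$, so $\alpha^1_{U_4^{l+1}}$ never appears in $\iota(\phi^{2;z^4\oz u_4^l})$.

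The constant $-4\i$ therefore comes entirely from substituting previously normalized forms. For example, $\Dt_{z^4\oz}(\oxi\,v^2_{\oz})$ and $\Dt_{z^4\oz}(\xi\,v^2_z)$ produce $\omu_{U_2}$ and $\omu_{U_3}$ (through $\oxi_{v^t}v^t_{z^2\oz}$) and $\mu_{Z^3}$ (through $\xi_{z^3}v^2_{z^2\oz}$). The normalizations of these forms involve $\alpha^1_{U_4}$; for instance, the imaginary part of \eqref{B-A-5-1} gives $\im\mu_{U_3}\equiv-\frac14\,\alpha^1_{U_4}$, which enters through $\omu_{U_3}$. So two of your claims fail:
\begin{itemize}
\item the raw prolonged coefficient does not carry $-4\i$;
\item the substituted normalizations do not ``never reintroduce'' $\alpha^1_{U_4^{l+1}}$ — they are its only source.
\end{itemize}
As written, your induction does not control the coefficient you need.

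\textbf{Missing idea: shift-equivariance of the whole normalized system.} It is not enough to shift $\phi^{2;z^4\oz u_4^l}$ alone. Consider every family whose normalization enters $\dt V^2_{Z^4\oZ U_4^l}$, directly or through other normalizations, e.g. $\mu_{Z^3U_4^l}$, $\omu_{U_2U_4^l}$, $\re\mu_{U_3U_4^l}$, $\im\mu_{U_3U_4^l}$, $\alpha^1_{U_2U_4^l}$, $\alpha^1_{U_3U_4^l}$, $\mu_{U_4^{l+1}}$, $\alpha^{2,3}_{U_4^{l+1}}$.
\begin{itemize}
\item Each of these is normalized by the $U_4^l$-shift of a level-$0$ phantom invariant.
\item By \eqref{eq: prolongation formula 2}, the top-order part of each shifted recurrence is the $U_4^l$-shift of the level-$0$ one; the $\Dt_{u_4}$-derivatives of coefficients only multiply lower forms.
\item A simultaneous induction on $l$ over all these families then shows that each normalized form at level $l$ contains $\alpha^1_{U_4^{l+1}}$ with exactly the coefficient with which its level-$0$ counterpart contains $\alpha^1_{U_4}$.
\item Forms that are free at level $0$ but normalized at level $l$, such as $\alpha^4_{U_4^{l+1}}$ via $\re V^3_{Z^3\oZ U_2U_4^{l-1}}$, must be checked separately. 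They feed in only $\alpha^1_{U_4^m}$ with $m\leq l$.
\end{itemize}
This yields the coefficient $-4\i$ identically for every $l$, which is what the paper's ``more generally'' tacitly relies on. Your closing openness argument then becomes unnecessary. Its two alternatives, ``real polynomial'' and ``$-4\i+O(\bb)$'', were in any case asserted without justification.
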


As in branch {\bf A$^\prime$}, it now remains only to normalize the Maurer--Cartan form $\alpha^4_{U_4}$. Indeed, the above lemma finalizes the construction of the desired complete normal form of Branch A$^{\prime\prime}$ introduced in the main Theorem \ref{main-result}. 

Seeking for possible normalization of the single form $\alpha^4_{U_4}$, we first turn to the recurrence relations of the two remaining fourth order invariants, namely $\im V^4_{Z^2\oZ U_2}$ and $V^4_{Z^2\oZ U_1}$ given in \eqref{rec-rel-ord-4-2}. Since these two invariants are now relative, we divide this branch into the following three subbranches

\begin{description}
\item[Branch A$^{\prime\prime}$-1] $V^4_{Z^2\oZ U_1}\neq 0$.
\item[Branch A$^{\prime\prime}$-2] $V^4_{Z^2\oZ U_1} = 0$ but $\im V^4_{Z^2\oZ U_2}\neq 0$.
\item[Branch A$^{\prime\prime}$-3] $V^4_{Z^2\oZ U_1}, \im V^4_{Z^2\oZ U_2} = 0$.
\end{description}

 Let us proceed to the investigation of these subbranches.

\subsection{Branch A$^{\prime\prime}$-1}
 This branch is analogous to Branch {\bf A$^\prime$-1} as the normalization of $\alpha^4_{U_4}$ can again be achieved by setting $\|V^4_{Z^2\oZ U_1}\| =1$. Consequently, we recover the complete normal form obtained in Theorem \ref{Theorem-A-1} subject to the following two modifications: $1)$ the fourth defining equation in \eqref{NM-Branch-A-1-NF} is now given by
\[
\aligned
v^4&=\frac{\rm i}{6} \, (z^3\oz- z\oz{}^3) +\frac{\exp(\i\theta)}{2} z^2\oz u_1+\frac{\exp(-\i\theta)}{2} z\oz{}^2 u_1
\\
&\hspace{2.5cm} +\frac{\i \im V^4_{Z^2\oZ U_2}}{2} (z^2\oz u_2-z\oz^2u_2) +\sum_{j+k+|\ell |\geq 5} \frac{V^4_{Z^j\oZ{}^k U^\ell}}{j! k! \ell!} z^j\oz{}^k u^\ell,
\endaligned
\]
 and $2)$ in the corresponding cross-section \eqref{cross-sec-A-1-+}, substitute $\im V^4_{Z^2\oZ U_2U_4^l}=0$ with $\im V^2_{Z^4\oZ U_4^l}=0$.

The isotropy group at the origin associated with the manifolds of this branch are discrete and thus, the presented normal form will be unique up to these discrete groups.

\subsection{Branch A$^{\prime\prime}$-2}

We now assume that $V^4_{Z^2\oZ U_1} = 0$ but $\im V^4_{Z^2\oZ U_2} \neq 0$.  In this branch the recurrence relation \eqref{rec-rel-ord-4-2-eq1} reduces to
\[
\dt(\im V^4_{Z^2\oZ U_2}) \equiv -\frac{1}{2} \im V^4_{Z^2\oZ U_2} \alpha^4_{U_4}.
\]
Then as before, setting $\|\im V^4_{Z^2\oZ U_2}\|=1$ immediately enables us to normalize the last Maurer--Cartan form $\alpha^4_{U_4}$.

Additionally, in this branch, examining the coefficients of the independent horizontal forms in the recurrence relation for $V^4_{Z^2\oZ U_1} = 0$ imposes the following extra constraints on the order five differential invariants
\begin{align}
V^4_{Z^2\oZ U_1U_4} &= 0,\notag \\
V^4_{Z^2\oZ U_1 U_3} &= \frac{1}{12}(\i + 4V^4_{Z^2\oZ{}^2})V^3_{Z\oZ{}^3U_3} + \frac{1}{12}(\i + V^4_{Z^2\oZ{}^2} + 3\i (V^4_{Z^2\oZ{}^2})^2) V^3_{Z^3\oZ U_3},\notag \\
V^4_{Z^2\oZ U_1U_2} &= -\frac{1}{4}(V^4_{Z^2\oZ{}^2}-\i (V^4_{Z^2\oZ^2})^2)V^3_{Z^3\oZ{} U_2} + \frac{1}{4}(\i + V^4_{Z^2\oZ{}^2}) V^3_{Z^2\oZ{}^2U_2}\notag \\
V^4_{Z^2\oZ U_1^2} &= \frac{1}{4}(1+\i V^4_{Z^2\oZ{}^2})V^2_{Z^2\oZ{}^2U_1}+\frac{1}{4}(\i + V^4_{Z^2\oZ{}^2})V^3_{Z^2\oZ{}^2U_1},\label{A'-1p constraints}\\
V^4_{Z^2\oZ{}^2 U_1} &= -2\i\epsilon-(\re V^3_{Z^2\oZ^3}-\im V^3_{Z^2\oZ^3}) (V^4_{Z^2\oZ^2})^3 \notag
\\
&-\frac{1}{12}\,(11\,\re V^2_{Z^2\oZ^3}+5\,\im V^3_{Z^2\oZ^3}+16\,\im V^3_{Z\oZ^4}) (V^4_{Z^2\oZ^2})^2\notag
\\
&+\frac{1}{12}\, (10\,\im V^2_{Z^2\oZ^3}-12\, \re V^3_{Z^2\oZ^3}+4\,\re V^3_{Z\oZ^4}+96\,\epsilon) V^4_{Z^2\oZ^2} \notag
\\
&+\frac{1}{12}\,(2\,\re V^2_{Z^2\oZ^3}-2\,\im V^3_{Z^2\oZ^3}-\im V^3_{Z\oZ^4}),\notag \\
\re V^3_{Z\oZ{}^4} &= 24\,\epsilon-3\,(\im V^2_{Z^2\oZ^3}+\re V^3_{Z^2\oZ^3}) (V^4_{Z^2\oZ^2})^2 \notag
\\
&-2\, (\re V^2_{Z^2\oZ^3}+2\, \im V^3_{Z^2\oZ^3}+2\,\im V^3_{Z\oZ^4}) V^4_{Z^2\oZ^2}+2\,\im V^2_{Z^2\oZ^3}-4\,\re V^3_{Z^2\oZ^3},\notag \\
V^4_{Z^3\oZ U_1} &= \frac{1}{12} (2-\i V^4_{Z^2\oZ{}^2}) \re V^2_{Z^2\oZ{}^3} + \frac{1}{12} (\i+4 V^4_{Z^2\oZ{}^2}) V^3_{Z^2\oZ{}^3} + \frac{1}{4}(\i + V^4_{Z^2\oZ{}^2})V^3_{Z^3\oZ{}^2}\notag \\
&-\frac{1}{12}(1+\i V^4_{Z^2\oZ{}^2} + 3(V^4_{Z^2\oZ{}^2})^2)\re V^2_{Z^4\oZ}
+ \frac{1}{12}(\i + V^4_{Z^2\oZ{}^2} + 3\i (V^4_{Z^2\oZ{}^2})^2) V^3_{Z^3\oZ},\notag
\end{align}
where $\epsilon=\rm{sgn}(\im V^4_{Z^2\oZ U_2})$ depends on the sign of the relative invariant $\im V^4_{Z^2\oZ U_2}$.
Further prolongations provide expressions for $V^4_{Z^{2+j} \oZ{}^{1+k} U_1^{1+l_1} U_2^{l_2} U_3^{l_3} U_4^{l_4}}$. Hence, as in the previous subbranch, all Maurer--Cartan forms are normalized here and we have

\begin{Theorem}\label{Theorem-A'-2}
Let $M\subset\mathbb C^4$ be a $6$-dimensional totally nondegenerate manifold belonging to Branch {\bf A$^{\prime\prime}$-2}. There exists a transformation of $\mathbb{C}^4$ locally mapping $M$ to the normal form
\begin{align*}
v^1&=z\oz+\sum_{j+k+|\ell |\geq 5} \frac{V^1_{Z^j\oZ{}^k U^\ell}}{j! k! \ell!} z^j\oz{}^k u^\ell,\nonumber
\\
v^2&=\frac{1}{2}\,(z^2\oz+z\oz{}^2)+\sum_{j+k+|\ell |\geq 5} \frac{V^2_{Z^j\oZ{}^k U^\ell}}{j! k! \ell!} z^j\oz{}^k u^\ell,
\\
v^3&=-\frac{\rm i}{2}\,(z^2\oz-z\oz^2)+\sum_{j+k+|\ell |\geq 5} \frac{V^3_{Z^j\oZ{}^k U^\ell}}{j! k! \ell!} z^j\oz{}^k u^\ell,\nonumber
\\
v^4&=\frac{\rm i}{6} \, (z^3\oz- z\oz{}^3)+\frac{\i\epsilon}{2}\, \big(z^2\oz u_2-z\oz{}^2 u_2\big)
+\sum_{j+k+|\ell |\geq 5} \frac{V^4_{Z^j\oZ{}^k U^\ell}}{j! k! \ell!} z^j\oz{}^k u^\ell,\nonumber
\end{align*}
where the coefficients $V_J$ of order $|J| \geq 5$ satisfy the partial cross-section $\mathcal K$ in \eqref{partial-cross-sec}-\eqref{cross-sec-A-1-+}, subject to the substitution $\im V^4_{Z^2\oZ U_2U_4^l}=0$ with $\im V^2_{Z^4\oZ U_4^l}=0$, along with the order five relations\footnote{These relations must be evaluated subject to the branch hypothesis $V^4_{Z^2\oZ^2}(0)=0$, bearing in mind that our defining normal forms are power series expansions around the origin.} \eqref{A'-1p constraints}.
 In this branch, $M$ admits trivial isotropy group at the origin.
\end{Theorem}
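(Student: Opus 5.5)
The statement of Theorem \ref{Theorem-A'-2} follows by assembling the normalizations already carried out, in the same manner as Theorems \ref{Theorem-A-1} and \ref{Theorem-A-2-1}.

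First, we collect the cross-section. The partial cross-section \eqref{partial-cross-sec} comes from Lemmas \ref{lem-ord-1}--\ref{lem-ord-4}. Lemmas \ref{lem-ord-4-Branch-A-1} and \ref{lem-ord-4-Branch-A-2} give $V^4_{Z^3\oZ}=\i$ together with $V^4_{Z^3\oZ U_3^kU_4^l}=V^3_{Z^3\oZ U_4^l}=0$. Lemma \ref{lem-A-1-ord-5} gives the order five normalizations of \eqref{cross-sec-A-1-+}. In Branch A$^{\prime\prime}$, Lemma \ref{lem-alpha1-U4-A'} replaces Lemma \ref{lem-Branch-A-alpha1-U4}, and this accounts for the substitution $\im V^4_{Z^2\oZ U_2U_4^l}=0 \rightsquigarrow \im V^2_{Z^4\oZ U_4^l}=0$. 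At this point the only unnormalized Maurer--Cartan form is $\alpha^4_{U_4}$.

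The branch hypothesis $V^4_{Z^2\oZ U_1}=0$ reduces \eqref{rec-rel-ord-4-2-eq1} to $\dt(\im V^4_{Z^2\oZ U_2})\equiv-\frac12\im V^4_{Z^2\oZ U_2}\,\alpha^4_{U_4}$, since $\bb(0)=0$. The invariant $\im V^4_{Z^2\oZ U_2}$ is therefore relative and nonzero, and setting it equal to $\epsilon=\pm1$ normalizes $\alpha^4_{U_4}$. This is the origin of the term $\frac{\i\epsilon}{2}(z^2\oz u_2-z\oz^2u_2)$ in $v^4$. The coefficient $\bb z^2\oz^2$ disappears because $\bb(0)=0$. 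The term in $u_1$ is absent because $V^4_{Z^2\oZ U_1}=0$. The $\ba=\frac{\i}{6}$ part comes from $V^4_{Z^3\oZ}=\i$ after division by $3!$.

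Second, we derive the constraints \eqref{A'-1p constraints}. Since $V^4_{Z^2\oZ U_1}$ vanishes identically, its full recurrence relation (not taken modulo the horizontal coframe) gives $0=\sum_X(\cdots)\omega^X$ once every Maurer--Cartan form has been replaced by its normalized expression. The coefficient of each of $\omega^Z,\omega^{\oZ},\omega^1,\dots,\omega^4$ must vanish, which yields a linear system in the fifth order invariants $V^4_{Z^2\oZ U_1U_s}$, $V^4_{Z^3\oZ U_1}$, $V^4_{Z^2\oZ^2U_1}$ and their conjugates. We solve this system for those invariants, and also for $\re V^3_{Z\oZ^4}$, after inserting $\im V^4_{Z^2\oZ U_2}=\epsilon$. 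Applying the total derivatives $\Dt_X$ to these syzygies then expresses every $V^4_{Z^{2+j}\oZ^{1+k}U_1^{1+l_1}\cdots}$ in terms of the remaining invariants.

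The main obstacle is this symbolic computation. It requires tracking the correction terms $\varpi$ and the full normalized expressions of $\alpha^r_{U_s}$, $\mu_{U_s}$ and related forms, including their horizontal parts, to first order. I would carry it out by computer algebra on the recurrence formula \eqref{rec-formula}, exactly as in Branch A$^\prime$-2.

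Finally, we establish the isotropy statement. Once every Maurer--Cartan form is normalized, the moving frame is a genuine (non-partial) moving frame on the regular region. By \cite{Valiquette-SIGMA}, the isotropy group of the origin is then discrete, and its elements must preserve the normalizations. The discrete symmetries left unfixed are the conjugation $Z\leftrightarrow\oZ$, already used to fix $V^4_{Z^3\oZ}=\i$, and the sign choices, which are absorbed into the invariant sign $\epsilon$. Checking that no nontrivial holomorphic map preserves $V^2_{Z^2\oZ}=1$, $V^3_{Z^2\oZ}=-\i$, $V^4_{Z^3\oZ}=\i$ and $\im V^4_{Z^2\oZ U_2}=\epsilon$ at the linear level then shows that the isotropy group is trivial.
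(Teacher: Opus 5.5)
Your construction of the normal form is the paper's. You assemble the cross-section from Lemmas \ref{lem-ord-1}--\ref{lem-A-1-ord-5} and \ref{lem-alpha1-U4-A'}, normalize the last form $\alpha^4_{U_4}$ by setting $\im V^4_{Z^2\oZ U_2}=\epsilon$, and read off \eqref{A'-1p constraints} from the horizontal coefficients of $\dt V^4_{Z^2\oZ U_1}=0$.

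One justification there is off. The term $-\bb^2\alpha^1_{U_4}$ in \eqref{rec-rel-ord-4-2-eq1} does not drop because $\bb(0)=0$. The paper explicitly does not assume $\bb\equiv 0$ near the origin, and vanishing at a single point cannot remove a term from an identity of forms on a neighborhood. The term drops modulo the horizontal coframe because Lemma \ref{lem-alpha1-U4-A'} has already made $\alpha^1_{U_4}$ horizontal: take the imaginary part of $\dt V^2_{Z^4\oZ}$ with $\im V^2_{Z^4\oZ}=0$.

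The genuine problem is your final isotropy step. Your proposed check, that no nontrivial holomorphic map preserves $V^2_{Z^2\oZ}=1$, $V^3_{Z^2\oZ}=-\i$, $V^4_{Z^3\oZ}=\i$ and $\im V^4_{Z^2\oZ U_2}=\epsilon$ at the linear level, is false. Consider the linear map $\sigma(z,w_1,w_2,w_3,w_4)=(-z,w_1,-w_2,-w_3,w_4)$, which is the rotation \eqref{rotation} at $\theta=\pi$.
\begin{itemize}
\item It multiplies each Taylor coefficient $V^r_{Z^j\oZ^kU^\ell}$ by the real sign $s_r(-1)^{j+k+\ell_2+\ell_3}$, where $s_1=s_4=1$ and $s_2=s_3=-1$.
\item It therefore preserves every vanishing, real-part and imaginary-part condition in the cross-section.
\item The sign equals $+1$ on each of $V^1_{Z\oZ}$, $V^2_{Z^2\oZ}$, $V^3_{Z^2\oZ}$, $V^4_{Z^3\oZ}$ and $V^4_{Z^2\oZ U_2}$, so all the nonzero normalizations are preserved too.
\end{itemize}
This is exactly the residual freedom left when $V^4_{Z^3\oZ}=\i$ is normalized through the factor $2\i\alpha^2_{U_3}$, which fixes the rotation angle only modulo $\pi$. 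Your list of leftover discrete symmetries includes $Z\leftrightarrow\oZ$, which is antiholomorphic and so is not in the holomorphic isotropy group at all, and it misses $\sigma$.

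Consequently the stabilizer of the cross-section contains $\sigma$, and the normal form map is unique only modulo a finite group containing $\sigma$. For any $M$ whose normal form is $\sigma$-invariant, $\sigma$ is a nontrivial automorphism, so your sufficient condition for triviality can never be verified.

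What the complete normalization of the Maurer--Cartan forms actually gives, and all the paper's argument gives, is that the isotropy group is zero-dimensional, i.e.\ discrete. That is the sense in which the paper uses ``trivial'' (compare Branch A$''$-1, Branch B-1 and Section \ref{sec-convergence}). You should stop at the discreteness conclusion from \cite{Valiquette-SIGMA} rather than try to upgrade it to literal triviality through a linear-level check.
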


\subsection{Branch A$^{\prime\prime}$-3}

We now assume that both $V^4_{Z^2\oZ U_1}$ and $\im V^4_{Z^2\oZ U_2}$ vanish, identically. Consequently, no further normalization is available at order four. The effect of vanishing $V^4_{Z^2\oZ U_1}$ on the order five invariants are given in \eqref{A'-1p constraints} with $\epsilon =0$. Incorporating these expressions, the coefficients of the horizontal forms in the recurrence relation for $\im V^4_{Z^2\oZ U_2} = 0$ yield in addition
\begin{align}
&V^4_{Z^2\oZ U_2U_4} = 0,\notag \\
&V^4_{Z^2\oZ U_2U_3} = \frac{\i}{64}\big(\re V^3_{Z^3\oZ U_3}\,\im V^4_{Z\oZ^4}-\im V^2_{Z^4\oZ U_3}\big) (V^4_{Z^2\oZ^2})^2,\label{A-2 constraints} \\
&V^4_{Z^2\oZ U_2U_2} = -\frac{1}{128}\big(\i\re V^4_{Z^3\oZ U_2} (\im V^3_{Z\oZ^4}-3)-4\i\im V^2_{Z^4\oZ U_2}\big) (V^4_{Z^2\oZ^2})^2,\notag \\
&V^3_{Z^2\oZ^2 U_2} = {\sf P}(V_J),\notag \\
&V^4_{Z^3\oZ U_2} = {\sf Q}(V_J),\notag
\end{align}
where $\sf P$ and $\sf Q$ are two respectively real and complex polynomials in terms of the lifted invariants $V_J$ of order $|J|=5$.
At order five, we are thus left with the set of independent {\it relative} invariants
\begin{multline*}
\aligned
\mathcal{R}_\textbf{A$^{\prime\prime}$-3} = \{V^1_{Z^3\oZ{}^2}, &V^1_{Z^2\oZ{}^2U_1}, \re V^2_{Z^4\oZ}, \re V^2_{Z^3\oZ{}^2}, V^2_{Z^2\oZ{}^2U_1}, \im V^3_{Z^4\oZ}, V^3_{Z^3\oZ{}^2}, \im V^3_{Z^3\oZ U_2}, \\
& \ \ \ V^3_{Z^3\oZ U_3}, V^3_{Z^2\oZ{}^2U_1}, V^3_{Z^2\oZ U_1^2}, V^4_{Z^4\oZ}, V^4_{Z^3\oZ{}^2}, V^4_{Z^2\oZ{}^2U_2}, V^4_{Z^2\oZ{}^2 U_3}, V^4_{Z^2\oZ{}^2 U_4}\},
\endaligned
\end{multline*}
where their recurrence relations are exactly as those displayed in \eqref{Rec-rel-ord-5-A-2}. Thus, similar to Branch {\bf A$^\prime$-2}, we observe that each recurrence relation $\dt R$, with $R\in \mathcal{R}_\textbf{A$''$-3}$ allows for the normalization of $\alpha^4_{U_4}$ if $R\neq 0$.  Accordingly, we divide the normalization process into the two subbranches
\begin{description}
  \item[{\bf Branch A$^{\prime\prime}$-3-1}] If at least one relative invariant in $\mathcal{R}_\textbf{A$^{\prime\prime}$-3}$ is nonzero.
  \item[{\bf Branch A$^{\prime\prime}$-3-2}] If all relative invariants in $\mathcal{R}_\textbf{A$^{\prime\prime}$-3}$ locally vanish.
\end{description}

Let us continue along these two new subbranches.

\subsubsection{{\bf Branch A$^{\prime\prime}$-3-1}}

By the assumption, there exists a {\it nonzero} relative invariant $R \in \mathcal{R}_\textbf{A$^{\prime\prime}$-3}$. Then, we can set $\| R \| = 1$ and solve the recurrence equation of $d (\| R \|)$ in \eqref{Rec-rel-ord-5-A-2} to normalize $\alpha^4_{U_4}$.

\begin{Theorem}
\label{Theorem-A-2-1}
Let $M\subset\mathbb C^4$ be a $6$-dimensional totally nondegenerate manifold belonging to Branch {\bf A$^{\prime\prime}$-3-1} with some nonzero relative invariant $R\in \mathcal{R}_\textbf{A$^{\prime\prime}$-3}$. There exists a certain transformation of $\mathbb{C}^5$, mapping $M$ locally to the normal form
\begin{equation*}
\aligned
v^1&=z\oz+\sum_{j+k+|\ell |\geq 5} \frac{V^1_{Z^j\oZ{}^k U^\ell}}{j! k! \ell!} z^j\oz^k u^\ell,
\\
v^2&=\frac{1}{2}\,(z^2\oz+z\oz^2)+\sum_{j+k+|\ell |\geq 5} \frac{V^2_{Z^j\oZ{}^k U^\ell}}{j! k! \ell!} z^j\oz^k u^\ell,
\\
v^3&=-\frac{\rm i}{2}\,(z^2\oz-z\oz^2)+\sum_{j+k+|\ell |\geq 5} \frac{V^3_{Z^j\oZ{}^k U^\ell}}{j! k! \ell!} z^j\oz^k u^\ell,
\\
v^4&=\frac{\rm i}{6} \, (z^3\oz- z\oz^3)+\sum_{j+k+|\ell |\geq 5} \frac{V^4_{Z^j\oZ{}^k U^\ell}}{j! k! \ell!} z^j\oz^k u^\ell,
\endaligned
\end{equation*}
where the coefficients $V_J$ and their conjugations satisfy the cross-section normalizations \eqref{partial-cross-sec} and \eqref{cross-sec-A-1-+}, subject to replacing $\im V^4_{Z^2\oZ U_2U_4^l}=0$ by $\im V^2_{Z^4\oZ U_4^l}=0$ in \eqref{cross-sec-A-1-+}, together with $\| R \| = 1$. The coefficients of this normal form enjoy the relations \eqref{A'-1p constraints}-\eqref{A-2 constraints} and their prolongations with $\epsilon=0$. It is also unique up to a discrete group of transformations.
\end{Theorem}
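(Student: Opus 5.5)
The plan is to read off the statement from the moving frame constructed in Branch {\bf A$''$}, exactly as in the proofs of Theorems \ref{Theorem-A-1} and \ref{Theorem-A-2-1} for Branch {\bf A$'$}. We work in Branch {\bf A$''$-3-1}, where the cross-section consists of:
\begin{itemize}
\item the order zero normalizations \eqref{order 0 normalizations};
\item the normalizations of Lemmas \ref{lem-ord-1}--\ref{lem-ord-4}, collected in \eqref{partial-cross-sec};
\item the Branch {\bf A} normalizations of Lemmas \ref{lem-ord-4-Branch-A-1}, \ref{lem-ord-4-Branch-A-2} and \ref{lem-A-1-ord-5}, namely $V^4_{Z^3\oZ U_3^kU_4^l}=\i\delta^{k,l}_{0,0}$, $V^3_{Z^3\oZ U_4^l}=V^3_{Z\oZ^3U_1U_4^l}=\im V^2_{Z^2\oZ^3U_4^l}=V^1_{Z^2\oZ^2U_2U_3^jU_4^l}=\re V^3_{Z^3\oZ U_2U_4^l}=0$;
\item the replacement of Lemma \ref{lem-Branch-A-alpha1-U4} by Lemma \ref{lem-alpha1-U4-A'}, i.e. $\im V^2_{Z^4\oZ U_4^l}=0$.
\end{itemize}
Together these normalize every Maurer--Cartan form in the basis \eqref{Basis-MC-original} except $\alpha^4_{U_4}$. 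So the first step is bookkeeping: list the forms \eqref{MC-3} and \eqref{MC-4-B-A} and check that each is killed by one of these lemmas.

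The second step normalizes the last form $\alpha^4_{U_4}$. By hypothesis some $R\in\mathcal R_{\textbf{A$''$-3}}$ is nonzero. Its recurrence relation, of the form listed in \eqref{Rec-rel-ord-5-A-2}, reads $\dt R\equiv -c\,R\,\alpha^4_{U_4}$ for some nonzero rational constant $c$. Computing $\dt(R\overline R)$ then gives $\dt\|R\|\equiv -c\,\|R\|\,\alpha^4_{U_4}$, so the phantom condition $\|R\|=1$ yields $\alpha^4_{U_4}\equiv 0$ modulo the horizontal coframe.

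One point needs checking here. The relations \eqref{Rec-rel-ord-5-A-2} were derived in Branch {\bf A$'$-2} and are claimed to transfer to Branch {\bf A$''$-3}. One must confirm that the differences between the two branches produce no extra terms in $\alpha^1_{U_4}$:
\begin{itemize}
\item $\bb(0)=0$ here, whereas $\bb$ was nonzero in Branch {\bf A$'$-2};
\item $\alpha^1_{U_4}$ is now normalized via $\im V^2_{Z^4\oZ}$ instead of via $\im V^4_{Z^2\oZ U_2}$.
\end{itemize}
If extra terms did appear, I would substitute the normalized expression of $\alpha^1_{U_4}$ from Lemma \ref{lem-alpha1-U4-A'}, which is horizontal modulo $\alpha^4_{U_4}$, and so preserve the relative character of $R$.

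Next I would check that no normalization is redundant. The Branch {\bf A$''$-3} hypotheses $V^4_{Z^2\oZ U_1}\equiv0$ and $\im V^4_{Z^2\oZ U_2}\equiv0$ are compatible with the cross-section, and they impose the relations \eqref{A'-1p constraints} with $\epsilon=0$ and \eqref{A-2 constraints}. These come from the coefficients of $\omega^Z,\omega^{\oZ},\omega^1,\ldots,\omega^4$ in the recurrence relations of the vanishing invariants. Their prolongations follow by applying the recurrence formula \eqref{eq: recurrence relation} repeatedly to these identities.

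For the normal form itself, note that all order-four lifted invariants not yet fixed vanish in this branch: $V^3_{Z^3\oZ}$ is a phantom, $V^4_{Z^2\oZ U_1}=\im V^4_{Z^2\oZ U_2}=0$, and $\bb=0$. The Taylor expansion \eqref{normal form} at the origin therefore reduces to the stated series, with $v^4=\frac{\i}{6}(z^3\oz-z\oz^3)+\ldots$.

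The last step is uniqueness up to a discrete group. Since all Maurer--Cartan forms are normalized, the moving frame is a genuine (not partial) equivariant frame on $\hB\ii$, and the isotropy is zero-dimensional, following \cite{Valiquette-SIGMA}. The remaining discrete ambiguities have three sources:
\begin{itemize}
\item the sign or phase choice in $\|R\|=1$, since only the modulus is fixed;
\item the choice $V^4_{Z^3\oZ}=\i$ rather than $-\i$, made via $Z\leftrightarrow\oZ$;
\item the discrete choices in the order-three normalizations.
\end{itemize}
The main obstacle I expect is the verification of the transferred recurrence relations and of the constraints \eqref{A-2 constraints} in the degenerate setting $\bb(0)=0$. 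This is a symbolic computation with the prolongation formula \eqref{prolong-formula}, checking that in particular $\alpha^1_{U_4}$ is absent from these relations.
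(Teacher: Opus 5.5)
Your proposal is correct and takes the paper's route: in Branch $\textbf{A}''$ every Maurer--Cartan form except $\alpha^4_{U_4}$ is already normalized, and setting $\|R\|=1$ in the recurrence relation of type \eqref{Rec-rel-ord-5-A-2} normalizes this last form, so the moving frame is complete and the normal form is unique up to a discrete isotropy group. One small sharpening: stray $\alpha^1_{U_4}$ terms are harmless for a stronger reason than the one you give, since on the phantom $\im V^2_{Z^4\oZ}=0$ the imaginary part of $\dt V^2_{Z^4\oZ}\equiv -4\i\,\alpha^1_{U_4}-\frac{1}{2}V^2_{Z^4\oZ}\,\alpha^4_{U_4}$ makes $\alpha^1_{U_4}$ purely horizontal, whereas being horizontal only modulo $\alpha^4_{U_4}$ would not by itself preserve the relative character of $R$.
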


\subsubsection{\bf Branch A$^{\prime\prime}$-3-2}

We now assume that all relative invariants in $\mathcal{R}_\textbf{A$^{\prime\prime}$-3}$ vanish, identically.
In this branch, one finds the recurrence relation of $\bb$ simply as $\dt \bb=0$. Hence, it is nothing but a constant real function which vanishes at the origin, i.e. the {\it zero function}
\[
\bb\equiv 0.
\]
 As in Branch \textbf{A$^\prime$-2-2}, checking the coefficients of the horizontal forms in the recurrence relations of the vanishing relative invariants in $\mathcal{R}_\textbf{A$^{\prime\prime}$-3}$ reveals that only a few lifted differential invariants remain nonzero in order six. Among them, we consider the {\it relative} invariant $\im V^3_{Z^3\oZ U_1U_3}$ with the recurrence relations --- note that at this stage, the real counterpart $\re V^3_{Z^3\oZ U_1U_3}$ of this invariant vanishes as well as many other invariants
\begin{equation}\label{Rec-rel-ord-6-A-2-2}
\begin{aligned}
\dt (\im V^3_{Z^3\oZ U_1U_3})\equiv \frac{3}{2}\, \im V^3_{Z^3\oZ U_1U_3}\,\alpha^4_{U_4}.
\end{aligned}
\end{equation}
 Accordingly, we shall divide once again two new subbranches
\begin{description}
  \item[{\bf Branch A$^{\prime\prime}$-3-2-1}] If the real invariant $\im V^3_{Z^3\oZ U_1U_3}$ is nonzero at the origin.
  \item[{\bf Branch A$^{\prime\prime}$-3-2-2}] If $\im V^3_{Z^3\oZ U_1U_3}$ locally vanishes around the origin.
\end{description}

In light of the recurrence relation \eqref{Rec-rel-ord-6-A-2-2}, it is clear in the former branch {\bf A$^{\prime\prime}$-3-2-1} that we can normalize the last Maurer--Cartan form $\alpha^4_{U_4}$ by setting $\| \im V^3_{Z^3\oZ U_1U_3} \|=1$. Indeed, we have

\begin{Theorem}
\label{Theorem-A-2-2-1}
Let $M\subset\mathbb C^4$ be a $6$-dimensional totally nondegenerate manifold belonging to Branch {\bf A$^{\prime\prime}$-3-2-1}. Then, there exists some origin-preserving transformation, mapping $M$ to the  normal form
\begin{equation}\label{NM-Branch-A-2-2-NF}
\aligned
v^1&=z\oz+\sum_{j+k+|\ell |\geq 6} \frac{V^1_{Z^j\oZ{}^k U^\ell}}{j! k! \ell!} z^j\oz^k u^\ell,
\\
v^2&=\frac{1}{2}\,(z^2\oz+z\oz^2)+\sum_{j+k+|\ell |\geq 6} \frac{V^2_{Z^j\oZ{}^k U^\ell}}{j! k! \ell!} z^j\oz^k u^\ell,
\\
v^3&=-\frac{\rm i}{2}\,(z^2\oz-z\oz^2)+\sum_{j+k+|\ell |\geq 6} \frac{V^3_{Z^j\oZ{}^k U^\ell}}{j! k! \ell!} z^j\oz^k u^\ell,
\\
v^4&=\frac{\rm i}{6} \, (z^3\oz- z\oz^3)+\sum_{j+k+|\ell |\geq 6} \frac{V^4_{Z^j\oZ{}^k U^\ell}}{j! k! \ell!} z^j\oz^k u^\ell,
\endaligned
\end{equation}
where the coefficients $V_J$ and their conjugations enjoy the cross-section \eqref{partial-cross-sec}-\eqref{cross-sec-A-1-+}, replacing $\im V^4_{Z^2\oZ U_2U_4^l}=0$ by $\im V^2_{Z^4\oZ U_4^l}=0$ in \eqref{cross-sec-A-1-+}, along with $\| \im V^3_{Z^3\oZ U_1U_3} \| = 1$. The isotropy group of $M$ at the origin is discrete and the normal form is unique up to it.
\end{Theorem}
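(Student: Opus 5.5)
The proof collects the normalizations already made along the path to Branch \textbf{A$^{\prime\prime}$-3-2-1} and then closes the last gap at order six.

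First, recall where things stand. The cross-section \eqref{partial-cross-sec} comes from Lemmas \ref{lem-ord-1}--\ref{lem-ord-4}. The Branch A normalizations are Lemmas \ref{lem-ord-4-Branch-A-1}, \ref{lem-ord-4-Branch-A-2} and \ref{lem-A-1-ord-5}. Lemma \ref{lem-alpha1-U4-A'} replaces the constraint $\im V^4_{Z^2\oZ U_2U_4^l}=0$ by $\im V^2_{Z^4\oZ U_4^l}=0$. Together these normalize every Maurer--Cartan form in the basis \eqref{Basis-MC-original} except $\alpha^4_{U_4}$. So on the partial moving frame only a one-parameter group of freedom remains.

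Next, the branch hypothesis \textbf{A$^{\prime\prime}$-3-2} says that $V^4_{Z^2\oZ U_1}$, $\im V^4_{Z^2\oZ U_2}$ and every member of $\mathcal R_{\textbf{A}^{\prime\prime}\textbf{-3}}$ vanish identically. In particular $\bb\equiv0$. I will use the recurrence relations \eqref{rec-rel-ord-4-2}, \eqref{Rec-rel-ord-5-A-2} and the constraints \eqref{A'-1p constraints}--\eqref{A-2 constraints} with $\epsilon=0$. Expanding the vanishing of these relative invariants in the independent horizontal forms $\omega^Z,\omega^{\oZ},\omega^1,\dots,\omega^4$ gives linear relations among the fifth and sixth order lifted invariants. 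Solving them should show that every fourth and fifth order non-phantom invariant vanishes; this is why no terms of order $\le5$ beyond the leading model terms survive in \eqref{NM-Branch-A-2-2-NF}. It should also show that only a short list of sixth order invariants can be nonzero, among them $\im V^3_{Z^3\oZ U_1U_3}$.

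The key step is the recurrence relation \eqref{Rec-rel-ord-6-A-2-2}. This is the main obstacle: the full recurrence formula \eqref{eq: prolongation formula} for $\phi^{3;z^3\oz u_1u_3}$ must be invariantized and simplified using all the previous normalizations and all the vanishing relations. One must check that every term involving the already normalized forms either cancels or reduces to horizontal terms. What should remain, modulo the horizontal coframe, is exactly $\frac32\,\im V^3_{Z^3\oZ U_1U_3}\,\alpha^4_{U_4}$.

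I would organize this computation by weights. Since $\alpha^4_{U_4}$ is the infinitesimal dilation parameter, $\im V^3_{Z^3\oZ U_1U_3}$ has weight $4+2+3-3=6$. This predicts the coefficient $\frac{6}{4}=\frac32$, consistent with the other relations, for example $V^3_{Z^3\oZ U_3}$ of weight $4$ with coefficient $-1$ up to sign convention. That prediction gives a check on the symbolic cancellations.

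Given \eqref{Rec-rel-ord-6-A-2-2} and $\im V^3_{Z^3\oZ U_1U_3}(0)\neq0$, I pass to $\|\im V^3_{Z^3\oZ U_1U_3}\|$, which is a nonzero relative invariant with $\dt\|\cdot\|\equiv\frac32\|\cdot\|\alpha^4_{U_4}$. Setting $\|\im V^3_{Z^3\oZ U_1U_3}\|=1$ normalizes $\alpha^4_{U_4}$, so every Maurer--Cartan form is now normalized. The cross-section then defines a genuine moving frame of order six. Its existence gives the biholomorphism to \eqref{NM-Branch-A-2-2-NF} with coefficients obeying the stated constraints. The transformation is unique up to the stabilizer of the cross-section, which is zero-dimensional. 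The only residual ambiguity is discrete and comes from the sign of $\im V^3_{Z^3\oZ U_1U_3}$ and from the earlier choices, such as $Z\leftrightarrow\oZ$ and the sign choices hidden in the absolute value. So the isotropy group at the origin is discrete, by \cite{Valiquette-SIGMA}.
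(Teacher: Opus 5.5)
Your proposal is correct and follows the paper's own argument. After Lemma \ref{lem-alpha1-U4-A'} only $\alpha^4_{U_4}$ remains; the branch hypothesis (with $\bb\equiv 0$) kills all non-phantom invariants of order $\le 5$ and leaves the relative invariant $\im V^3_{Z^3\oZ U_1U_3}$ satisfying \eqref{Rec-rel-ord-6-A-2-2}; setting $\|\im V^3_{Z^3\oZ U_1U_3}\|=1$ then normalizes $\alpha^4_{U_4}$, which gives a complete moving frame and hence a discrete isotropy group. One small quibble: your weight heuristic, used with the same convention as $V^3_{Z^3\oZ U_3}$ (coefficient $-1$ at weight $4$), actually predicts $-\frac32$ rather than the displayed $+\frac32$, so no ``sign convention'' reconciles them; this is harmless, because the normalization only uses the nonvanishing of the coefficient.
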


But, in Branch {\bf A$^{\prime\prime}$-3-2-2}, where the order six lifted differential invariant $\im V^3_{Z^3\oZ U_1U_3}$ locally vanishes, the recurrence relation \eqref{Rec-rel-ord-6-A-2-2} does not help anymore to normalize the last Maurer--Cartan form $\alpha^4_{U_4}$.
By a similar argument as that we presented in Branch {\bf A$^\prime$-2-2-2}, we prove that the normalization of $\alpha^4_{U_4}$ is impossible even in higher orders.

Indeed, after performing necessary computations, we find in this branch the same constant type structure equations \eqref{struc-eq-A-2-2-2} with $\bb = 0$.
Thus, we shall prolong the equivalence problem to our $6$-dimensional surfaces $M$ of the current branch to the $7$-dimensional prolonged spaces $M\times\mathcal G^{\sf red}$, where $\mathcal G^{\sf red}$ is the $1$-dimensional reduced subgroup of $\mathcal G$ identified by the Maurer--Cartan form $\alpha^4_{U_4}$. Our computations show in addition that
\begin{equation}\label{dalpha4-U4-A'}
\dt \alpha^4_{U_4}=0.
\end{equation}
Thus in this branch, the equivalence problem is encoded by the {\it constant type} structure equations \eqref{struc-eq-A-2-2-2}-\eqref{dalpha4-U4-A'}, with $\bb = 0$, on the prolonged space $M\times\mathcal G^{\sf red}$. Again by Cartan's classical results, all surfaces of this branch are equivalent. Among them, we have the {\it single} Beloshapka's model $M(\frac{\rm i}{6}, 0)$ with the defining equations (cf. \eqref{def-eq-model})
\begin{equation}\label{model-A'-2-2-2}
\aligned
v^1&=z\oz,
\\
v^2&=\frac{1}{2}\,(z^2\oz+z\oz^2),
\\
v^3&=-\frac{\rm i}{2}\,(z^2\oz-z\oz^2),
\\
v^4&=\frac{\rm i}{6}\, (z^3\oz- z\oz^3).
\endaligned
\end{equation}

\begin{Theorem}
\label{Th-A-2-2-2}
Every $6$-dimensional totally nondegenerate manifold $M$ belonging to Branch {\bf A$^{\prime\prime}$-3-2-2} is biholomorphically equivalent to the model surface $M(\frac{\rm i}{6}, 0)$ defined in \eqref{model-A'-2-2-2}. The isotropy group of such surfaces is $1$-dimensional with the infinitesimal generator \eqref{dilation-inf}. The transformations mapping $M$ to $M(\frac{\rm i}{6}, 0)$ are unique up to the action of this isotropy group.
\end{Theorem}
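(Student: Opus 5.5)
The plan follows the argument already used for Theorem~\ref{Th-A-2-2-2} in Branch {\bf A$^\prime$-2-2-2}, with the parameter $\bb$ now set to zero. Three claims need proof: (a) every $M$ in Branch {\bf A$^{\prime\prime}$-3-2-2} is equivalent to $M(\frac{\i}{6},0)$; (b) the isotropy group is one-dimensional and generated by \eqref{dilation-inf}; (c) the normalizing map is unique modulo this isotropy.

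\emph{Step 1: fix the structure.} On the branch we have $\bb\equiv0$, and all relative invariants in $\mathcal R_{\textbf{A}^{\prime\prime}\textbf{-3}}$ vanish. The constraints \eqref{A'-1p constraints}--\eqref{A-2 constraints} (with $\epsilon=0$) force the fourth- and fifth-order lifted invariants entering the normalized Maurer--Cartan forms to vanish. We also have $\im V^3_{Z^3\oZ U_1U_3}\equiv0$, together with the remaining sixth-order invariants killed by the horizontal coefficients of \eqref{Rec-rel-ord-6-A-2-2}. Substituting all of this into the lifted structure equations \eqref{struc-equ} should give exactly \eqref{struc-eq-A-2-2-2} with $\bb=0$. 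Here $\alpha^4_{U_4}$ is the only unnormalized form.

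\emph{Step 2: close the system.} Using \cite[Theorem 2.2]{Valiquette-11}, compute $\dt\alpha^4_{U_4}$ and show that it reduces to \eqref{dalpha4-U4-A'}, i.e.\ $\dt\alpha^4_{U_4}=0$. This is the main obstacle. One must express the higher Maurer--Cartan forms $\alpha^4_{ZU_4}$, $\mu_{U_4}$, $\alpha^4_{U_sU_4}$, etc.\ in terms of the coframe and $\alpha^4_{U_4}$, using Lemmas~\ref{lem-A-1-ord-5} and \ref{lem-alpha1-U4-A'}. One must then check that every torsion coefficient is a combination of invariants already forced to vanish. I would first verify that the coefficient of each term $\omega^a\wedge\omega^b$ is an order $\le6$ invariant from the vanishing list; failing that, I would differentiate the vanishing conditions once more to obtain the needed identities.

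\emph{Step 3: conclude.} The seven-dimensional prolonged coframe $\{\omega^Z,\omega^{\oZ},\omega^1,\ldots,\omega^4,\alpha^4_{U_4}\}$ on $M\times\mathcal G^{\sf red}$ then has constant structure functions. By Cartan's theorem on constant-type coframes (an involutive $\{e\}$-structure with no invariants), any two such manifolds are locally equivalent. The model \eqref{model-A'-2-2-2} lies in this branch: its only non-normalized coefficients are the phantom values, so every member of $\mathcal R_{\textbf{A}^{\prime\prime}\textbf{-3}}$ and $\im V^3_{Z^3\oZ U_1U_3}$ vanish. Hence every $M$ in the branch is equivalent to $M(\frac{\i}{6},0)$, which proves (a). By \cite{Valiquette-SIGMA}, the isotropy group at the origin is $\mathcal G^{\sf red}$, which is one-dimensional; this proves the dimension part of (b). The dilation \eqref{dilation} preserves \eqref{model-A'-2-2-2} and fixes the origin, so its real generator \eqref{dilation-inf} spans the isotropy algebra, completing (b). Finally, two normalizing maps differ by an element preserving the normal form, hence by an element of this isotropy group, which gives (c).
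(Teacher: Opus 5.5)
Your proposal is correct and follows the paper's route. You reduce to the constant-type structure equations \eqref{struc-eq-A-2-2-2} with $\bb=0$ together with $\dt\alpha^4_{U_4}=0$ on the $7$-dimensional prolonged space $M\times\mathcal G^{\sf red}$. You then invoke Cartan's classical result to get equivalence with $M(\frac{\i}{6},0)$, and read off the one-dimensional isotropy $\mathcal G^{\sf red}$ from \cite{Valiquette-SIGMA}. The only difference is cosmetic: you identify the generator \eqref{dilation-inf} by a dimension count, since the dilation is a nonzero element of a one-dimensional isotropy algebra, whereas the paper directly computes the holomorphic vector fields tangent to the model that vanish at the origin.
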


\section{Branch B: $V^4_{Z^3\oZ} = 0$ and $V^4_{Z^2\oZ{}^2} \neq 0$}
\label{sec-Branch-B}

We now turn to Branch {\bf B}, where the fourth order differential invariant $V^4_{Z^3\oZ}$ vanishes locally while, by the total nondegeneracy, $V^4_{Z^2\oZ{}^2}$ remains nonzero. Recall that at this stage, all the lifted differential invariants appearing in \eqref{partial-cross-sec} have been normalized and the Maurer--Cartan forms that remain to be normalized are listed in \eqref{MC-3}.

Let us examine first the recurrence relation of the vanished invariant $V^4_{Z^3\oZ}$:
\[
\aligned
0 &=\dt V^4_{Z^3\oZ}= \big(V^4_{Z^4\oZ}+\frac{\rm i}{2}\,V^3_{Z^3\oZ}V^4_{Z^2\oZ^2}\big)\,\omega^Z+\big(V^4_{Z^3\oZ{}^2}-\frac{\rm i}{2}\,V^3_{Z^3\oZ}V^4_{Z^2\oZ^2}\big)\,\omega^{\oZ}
\\
&+\big(V^4_{Z^3\oZ U_1}+V^3_{Z^3\oZ} \,\im V^4_{Z^2\oZ U_1}\big)\,\omega^{1}+ \big(V^4_{Z^3\oZ U_2}+ V^3_{Z^3\oZ} \,\im V^4_{Z^2\oZ U_2}\big)\,\omega^{2}+V^4_{Z^3\oZ U_3}\,\omega^{3}+V^4_{Z^3\oZ U_4}\,\omega^{4}.
\endaligned
\]
As a result, equating to zero the coefficients of linearly independent horizontal forms gives in turns the following constraints on the order five differential invariants
\begin{equation}\label{relation-ord-5}
\aligned
 & V^4_{Z^4\oZ}=-\frac{\rm i}{2}\,V^3_{Z^3\oZ}V^4_{Z^2\oZ^2}, \qquad V^4_{Z^3\oZ{}^2}=\frac{\rm i}{2}\,V^3_{Z^3\oZ}V^4_{Z^2\oZ^2},
  \\
 &  V^4_{Z^3\oZ U_1}=-V^3_{Z^3\oZ}\,\im V^4_{Z^2\oZ U_1}, \qquad   V^4_{Z^3\oZ U_2} = - V^3_{Z^3\oZ} \,\im V^4_{Z^2\oZ U_2}, \qquad V^4_{Z^3\oZ U_3}=V^4_{Z^3\oZ U_4} = 0.
\endaligned
\end{equation}

We now continue the normalization process by inspecting the order four recurrence relations \eqref{rec-ord-4}. As the first consequence, one may normalize the Maurer--Cartan form $\alpha^3_{U_3}$ by solving the recurrence relation
\[
\dt V^4_{Z^2\oZ{}^2}\equiv V^4_{Z^2\oZ{}^2}\big(\alpha^4_{U_4}-\frac{4}{3} \, \alpha^3_{U_3}\big)
\]
after setting $V^4_{Z^2\oZ^2}=1$, as is permitted in this branch. More generally we have

\begin{Lemma}
\label{lem-Branch-B-alpha3-U-3}
In Branch {\bf B} and for each $k\geq 0$, one can normalize the Maurer--Cartan form $\alpha^3_{U_3^{k+1}}$ by setting $V^4_{Z^2\oZ{}^2 U_3^{k}}=\delta^k_0$.
\end{Lemma}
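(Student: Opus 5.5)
The plan is to argue by induction on $k$, in the same way as Lemmas \ref{lem-ord-2} and \ref{lem-ord-3}. We track the vector field jet $\eta^3_{u_3^{k+1}}$, whose invariantization is $\alpha^3_{U_3^{k+1}}$, inside the prolonged coefficient $\phi^{4;z^2\oz^2u_3^k}$. Then we check that it enters the recurrence relation of $V^4_{Z^2\oZ{}^2U_3^k}$ with a nonzero (constant) coefficient.

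The base case $k=0$ is the recurrence relation already displayed:
\[
\dt V^4_{Z^2\oZ{}^2}\equiv V^4_{Z^2\oZ{}^2}\big(\alpha^4_{U_4}-\tfrac{4}{3}\,\alpha^3_{U_3}\big).
\]
With $V^4_{Z^2\oZ{}^2}=1$ the left side vanishes, so $\alpha^3_{U_3}\equiv \frac{3}{4}\alpha^4_{U_4}$ modulo the horizontal coframe.

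For the inductive step we use the prolongation formula \eqref{eq: prolongation formula 2}:
\[
\phi^{4;z^2\oz^2u_3^{k}}=\Dt_{u_3}\phi^{4;z^2\oz^2u_3^{k-1}}-(\Dt_{u_3}\xi)\,v^4_{z^3\oz^2u_3^{k-1}}-(\Dt_{u_3}\oxi)\,v^4_{z^2\oz^3u_3^{k-1}}-\sum_s(\Dt_{u_3}\eta^s)\,v^4_{z^2\oz^2u_3^{k-1}u_s}.
\]
The coefficient $\phi^{4;z^2\oz^2}$ contains the block $-\tfrac43 v^4_{z^2\oz^2}\,\eta^3_{u_3}$. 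In the paper this arises through the already normalized combinations such as $\mu_Z$ and $\alpha^4_{U_2},\alpha^4_{U_3}$ expressed via $\alpha^3_{U_3}$. Applying $\Dt_{u_3}^k$ turns this block into $-\tfrac43 v^4_{z^2\oz^2}\,\eta^3_{u_3^{k+1}}$ plus terms with fewer $u_3$-derivatives on $\eta$. Every other occurrence of a jet of order $k+1$ in the vector field comes through the higher-order analogues of the same normalizations. These are $\mu_{ZU_3^k}$, $\alpha^2_{U_3^{k+1}}$, $\alpha^4_{U_3^{k+1}}$, $\alpha^r_{U_1U_3^k}$ and $\mu_{Z^2U_3^k}$, from Lemmas \ref{lem-ord-1}--\ref{lem-ord-4}. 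Each of them is solved symbolically in terms of $\alpha^3_{U_3^{k+1}}$, the unnormalized forms $\alpha^\ast_{U_4^{\ast}}$, $\mu_{U_4^\ast}$, $\im\mu_{U_3^{\ast}U_4^\ast}$, lower-order Maurer--Cartan forms, and the horizontal coframe.

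After substituting these and imposing the phantom values $V^4_{Z^2\oZ{}^2U_3^k}=\delta^k_0$, the recurrence relation should take the form
\[
0=\dt V^4_{Z^2\oZ{}^2U_3^k}= -\tfrac43\,\alpha^3_{U_3^{k+1}}+(\text{forms of lower order, and }\alpha^4_{U_4U_3^{k}},\ldots)+\varpi^4_{Z^2\oZ{}^2U_3^k}.
\]
Here the leading coefficient is $-\tfrac43 V^4_{Z^2\oZ{}^2}=-\tfrac43\neq0$. Since $\alpha^3_{U_3^{k+1}}$ is real and $V^4_{Z^2\oZ{}^2U_3^k}$ is real, one real equation normalizes one real form, which gives the claim. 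The branch-B relations \eqref{relation-ord-5} and the vanishing $V^4_{Z^3\oZ}=0$ are used to see that no competing term involving $V^4_{Z^3\oZ U_3^\ast}$ appears.

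The main obstacle is the bookkeeping in the inductive step. We must show that, after the earlier normalizations have been substituted, the coefficient of $\alpha^3_{U_3^{k+1}}$ stays exactly $-\tfrac43$ times $V^4_{Z^2\oZ{}^2}$ and does not cancel. Contributions through $\alpha^2_{U_3^{k+1}}$ (normalized in Lemma \ref{lem-ord-3} jointly with $\alpha^3$) and through $\mu_{ZU_3^k}$ could in principle alter this coefficient. I would handle this by noting that the substitution is linear with constant coefficients that are independent of $k$. These coefficients are the same as at $k=0$, because $\Dt_{u_3}$ commutes with the linear relations \eqref{MC-relations}. Hence the coefficient computed in the base case propagates unchanged to every $k$.
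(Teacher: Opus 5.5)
\paragraph{Same strategy as the paper, but the inductive step is misidentified.}
Your overall strategy is the paper's. It records only the $k=0$ relation $\dt V^4_{Z^2\oZ{}^2}\equiv V^4_{Z^2\oZ{}^2}(\alpha^4_{U_4}-\tfrac43\alpha^3_{U_3})$ and asserts the general case by prolongation. The gap is in your inductive step, which rests on a misidentified mechanism.
\begin{itemize}
\item There is no term $-\tfrac43 v^4_{z^2\oz^2}\eta^3_{u_3}$ in $\phi^{4;z^2\oz^2}$, and $\eta^3_{u_3^{k+1}}$ never occurs in $\phi^{4;z^2\oz^2u_3^k}$. The jet $\eta^3$ enters only through $-\eta^3v^4_{u_3}$, which carries at most $k$ derivatives in $u_3$. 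Also, $\phi^4_{v^3}$ produces $\alpha^4_{U_3}$, not $\alpha^3_{U_3}$.
\item On the cross-section the raw relation is $\dt V^4_{Z^2\oZ{}^2}\equiv V^4_{Z^2\oZ{}^2}(\alpha^4_{U_4}-2\mu_Z-2\omu_{\oZ})$. The other surviving terms, $\alpha^4_{ZU_s}$, $\alpha^4_{\oZ U_s}$ and $\gamma^4_{U_1U_1}$, are $\equiv0$ by Lemma~\ref{lem-ord-1}.
\item The term $-\tfrac43\alpha^3_{U_3}$ appears only after inserting the order-three normalization $\mu_Z\equiv\tfrac13\alpha^3_{U_3}-\i\alpha^2_{U_3}$. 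The forms $\alpha^4_{U_2},\alpha^4_{U_3}$ play no role: they are $\equiv 0$, and their coefficients $V^2_{Z^2\oZ{}^2}$, $V^3_{Z^2\oZ{}^2}$ are zero.
\end{itemize}
So there is no ``block'' to differentiate. The form $\mu_{ZU_3^k}$ is not a perturbation to be controlled; it is the only source of $\alpha^3_{U_3^{k+1}}$. Likewise, $\alpha^2_{U_3^{k+1}}$ is not normalized in Lemma~\ref{lem-ord-3}; item 3) there concerns $\alpha^2_{U_2^{j+1}\cdots}$ and $\alpha^3_{U_2^{j+1}\cdots}$. It enters through $\im\mu_{ZU_3^k}$ and cancels in $\mu_{ZU_3^k}+\omu_{\oZ U_3^k}$ by reality.

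\paragraph{What a correct inductive step needs.}
The constant-coefficient part of $\dt V^4_{Z^2\oZ{}^2U_3^k}$ is $\alpha^4_{U_3^kU_4}-2\mu_{ZU_3^k}-2\omu_{\oZ U_3^k}$, plus $u_3^k$-shifts of the forms listed above, which stay $\equiv0$. Items 2)--3) of Lemma~\ref{lem-ord-3} with $\ell=U_3^k$ give $\re\mu_{ZU_3^k}\equiv\tfrac13\alpha^3_{U_3^{k+1}}+\cdots$, hence the coefficient $-\tfrac43$.

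Your claim that the $k=0$ coefficient ``propagates unchanged'' skips the one place where level $k\ge1$ differs from level $0$. At level $0$, $\alpha^4_{U_4}$ is free. For $k\ge1$, $\alpha^4_{U_3^kU_4}$ has already been normalized by item 5) of Lemma~\ref{lem-ord-3}, via $\im V^4_{Z^2\oZ U_3^{k-1}U_4}=0$. You must check that this normalization contains no $\alpha^3_{U_3^{k+1}}$. It does not, since the constant-coefficient part of that recurrence is just $\alpha^4_{U_2U_3^{k-1}U_4}-\i\alpha^4_{U_3^kU_4}$.

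Finally, what rules out other routes is not that $\Dt_{u_3}$ commutes with \eqref{MC-relations}. One way to justify it uses two facts.
\begin{itemize}
\item Weighted homogeneity: $\alpha^3_{U_3^{k+1}}$ and $V^4_{Z^2\oZ{}^2U_3^k}$ both have weight $-3k$, and every invariant not fixed to a constant has negative weight. Hence $\alpha^3_{U_3^{k+1}}$ can only appear with a constant coefficient.
\item The constant-coefficient part of $\phi^{4;z^2\oz^2u_3^k}$ is the $u_3^k$-shift of that of $\phi^{4;z^2\oz^2}$. This holds because the nonzero phantom values $V^1_{Z\oZ},V^2_{Z^2\oZ},V^3_{Z^2\oZ},V^4_{Z^2\oZ{}^2}$ carry no $u$-index.
\end{itemize}
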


Next, by setting $V^4_{Z^2\oZ U1}=0$, its corresponding recurrence relation in \eqref{rec-ord-4} can be readily solved to normalize the two real Maurer--Cartan forms $\alpha^2_{U_4}$ and $\alpha^3_{U_4}$. Likewise, the recurrence relation of $\im V^4_{Z^2\oZ U_2}$ provides the normalization of $\alpha^1_{U_4}$ upon setting this invariant to zero. More generally we have

\begin{Lemma}
\label{lem-Branch-B-alpha-1,2,3-U-4}
For each $k, l\geq 0$, one can normalize
\begin{itemize}
  \item[1)] two real Maurer--Cartan forms $\alpha^2_{U_3^k U_4^{l+1}}$ and $\alpha^3_{U_3^k U_4^{l+1}}$ by setting $V^4_{Z^2\oZ U_1 U_3^k U_4^l}=0$.
  \item[2)] the real Maurer--Cartan form $\alpha^1_{U_4^{l+1}}$ by setting $\im V^4_{Z^2\oZ U_2U_4^l}=0$.
\end{itemize}
\end{Lemma}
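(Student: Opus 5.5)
The plan follows the proofs of Lemmas \ref{lem-ord-2} and \ref{lem-ord-3}: track the highest-order vector field jets that enter the prolonged coefficients, then do an induction on $k+l$. Everything starts from the order four recurrence relations \eqref{rec-ord-4}, specialized to Branch {\bf B} with $V^4_{Z^3\oZ}=0$ and $V^4_{Z^2\oZ^2}=1$. In the relation for $V^4_{Z^2\oZ U_1}$, after substituting these values the coefficient of $\alpha^2_{U_4}$ becomes $-\frac{\i}{4}$ and that of $\alpha^3_{U_4}$ becomes $-\frac14$. The remaining terms are multiples of $V^4_{Z^2\oZ U_1}$ itself, which vanish once we set it to zero. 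So the base case reads
\[
0\equiv -\tfrac{1}{4}\big(\i\,\alpha^2_{U_4}+\alpha^3_{U_4}\big)+\ldots .
\]
Taking real and imaginary parts, and recalling that $\alpha^2_{U_4},\alpha^3_{U_4}$ are real, we can solve for both forms.

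Similarly, in the relation for $\im V^4_{Z^2\oZ U_2}$ the coefficient of $\alpha^1_{U_4}$ is $-\frac{1}{16}(V^4_{Z^2\oZ^2})^2=-\frac1{16}\neq0$. Setting $\im V^4_{Z^2\oZ U_2}=0$ therefore normalizes $\alpha^1_{U_4}$. The other forms appearing there, $\alpha^2_{U_3},\alpha^2_{U_4},\alpha^3_{U_4}$, are either already normalized by Lemma \ref{lem-Branch-B-alpha3-U-3} and the first item, or they contribute lower-order terms.

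For the general case I would use the prolongation formula \eqref{eq: prolongation formula 2} applied to $\phi^{4;z^2\oz u_1}$, in the form
\[
\phi^{4;z^2\oz u_1u_3^ku_4^l}=\Dt_{u_3}^k\Dt_{u_4}^l\big(\phi^{4;z^2\oz u_1}\big)+\cdots .
\]
The order four computation shows that the jets $\eta^2_{u_4},\eta^3_{u_4}$ enter $\phi^{4;z^2\oz u_1}$ linearly, with coefficient $-\frac{1}{4}(V^4_{Z^2\oZ^2})^2$ (up to the factor $\i$), coming from the $(V^4_{Z^2\oZ^2})^2$ terms in \eqref{rec-ord-4}. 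Applying $\Dt_{u_3}^k\Dt_{u_4}^l$, the top-order jets $\eta^{2}_{u_3^ku_4^{l+1}},\eta^3_{u_3^ku_4^{l+1}}$ appear with the same nonzero coefficient $-\frac14$ after invariantization, because $V^4_{Z^2\oZ^2 U_3^k}=\delta^k_0$ by Lemma \ref{lem-Branch-B-alpha3-U-3}. All other contributions involve jets of strictly lower order in $(u_3,u_4)$, or Maurer--Cartan forms already normalized at earlier stages: the forms in the previous lemmas, and those handled inductively in $k+l$.

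The recurrence relation for the phantom invariant $V^4_{Z^2\oZ U_1U_3^kU_4^l}=0$ then gives
\[
0\equiv -\tfrac14\big(\i\,\alpha^2_{U_3^kU_4^{l+1}}+\alpha^3_{U_3^kU_4^{l+1}}\big)+(\text{normalized forms}).
\]
Its real and imaginary parts determine the two real forms. The same argument applied to $\Dt_{u_4}^l\phi^{4;z^2\oz u_2}$ gives the leading term $-\frac1{16}\alpha^1_{U_4^{l+1}}$ in $\dt\,\im V^4_{Z^2\oZ U_2U_4^l}$, which proves item 2.

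The main obstacle is the bookkeeping needed to show that no other unnormalized form of the same order enters the leading part. The forms to worry about are those in \eqref{MC-3}, in particular $\alpha^4_{U_4^{l+1}}$, $\mu_{U_4^{l}}$ and $\im\mu_{U_3^{k+1}U_4^l}$; any of them could a priori appear through products of $\Dt$-derivatives with the nonvanishing invariants $V^2_{Z^2\oZ}=1$, $V^3_{Z^2\oZ}=-\i$, $V^4_{Z^2\oZ^2}=1$. I would first check that such a form appears only multiplied by phantom invariants that vanish, or with a real coefficient that cancels against its conjugate in the relevant real or imaginary part. If that check fails, I would instead order the normalizations so that the offending forms are solved first: $\alpha^4_{U_4^{l+1}}$ via $\re V^3_{Z^3\oZ U_2U_4^l}$-type relations, as in Branch {\bf A}. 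Triangularity in the order $(k,l)$ would then still give the claimed normalizations.
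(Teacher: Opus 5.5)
Your base case is correct and is exactly what the paper does. With $V^4_{Z^3\oZ}=0$ and $V^4_{Z^2\oZ^2}=1$, the relation \eqref{rec-ord-4} for $V^4_{Z^2\oZ U_1}$ has coefficients $-\frac{\i}{4}$ and $-\frac14$ on $\alpha^2_{U_4}$ and $\alpha^3_{U_4}$. The relation for $\im V^4_{Z^2\oZ U_2}$ has coefficient $-\frac1{16}$ on $\alpha^1_{U_4}$. The paper only asserts the general case. There are two harmless slips. The term $\frac{\i}{6}V^3_{Z^3\oZ}\alpha^2_{U_3}$ survives and is not a multiple of $V^4_{Z^2\oZ U_1}$. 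Also, $\alpha^2_{U_3}$ is not normalized by Lemma \ref{lem-Branch-B-alpha3-U-3}; it is still free in Branch B, and the targets are simply solved in terms of it.

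The inductive step, however, rests on a false premise. The forms $\alpha^2_{U_4},\alpha^3_{U_4}$ do not occur in the raw coefficient $\iota(\phi^{4;z^2\oz u_1})$ at all. The factor $(V^4_{Z^2\oZ^2})^2$ gives this away. By the prolongation formula, every coefficient there is a polynomial in jets whose multi-indices together use at most two $\oz$-derivatives, so $(V^4_{Z^2\oZ^2})^2$ cannot arise.

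The coefficients in \eqref{rec-ord-4} are produced by substituting earlier normalizations:
\begin{itemize}
\item[(a)] the raw relation contains $-V^4_{Z^2\oZ^2}\,\omu_{U_1}$, coming from the $\oxi\,v^4_{\oz}$ part;
\item[(b)] $\omu_{U_1}$ was solved at order three in terms of $\alpha^1_{U_2}-\i\alpha^1_{U_3}-\mu_{ZZ}$;
\item[(c)] these were solved at order four via $V^2_{Z^2\oZ^2}$, $V^3_{Z^2\oZ^2}$ and $V^2_{Z^3\oZ}$, and the first two relations contain $V^4_{Z^2\oZ^2}\alpha^2_{U_4}$ and $V^4_{Z^2\oZ^2}\alpha^3_{U_4}$ through $\phi^2_{v^4}v^4_{z^2\oz^2}$ and $\phi^3_{v^4}v^4_{z^2\oz^2}$.
\end{itemize}

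Likewise, $\alpha^{2}_{U_3^kU_4^{l+1}}$ and $\alpha^3_{U_3^kU_4^{l+1}}$ are absent from $\iota(\phi^{4;z^2\oz u_1u_3^ku_4^l})$. They could only come from $\Dt_K\eta^{2,3}$ with $K\le J$, which supplies at most $l$ derivatives in $u_4$. The missing one would need a pair $\eta_{v^4v^4}=-\eta_{u_4u_4}$, whose coefficient is a product of two nonvanishing $v^4$-jets, again requiring two $\oz$-derivatives. So applying $\Dt_{u_3}^k\Dt_{u_4}^l$ to $\phi^{4;z^2\oz u_1}$ produces nothing to solve for.

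The missing idea is to prolong the whole chain. You need to show three things:
\begin{itemize}
\item[(i)] $-V^4_{Z^2\oZ^2}\,\omu_{U_1U_3^kU_4^l}$ occurs with binomial coefficient one;
\item[(ii)] the prolonged normalizations of Lemma \ref{lem-ord-3}(1) (conjugated) and Lemma \ref{lem-ord-4}(3)--(5) keep the same leading coefficients, the latter picking up $V^4_{Z^2\oZ^2}\alpha^{2,3}_{U_3^kU_4^{l+1}}$ from $\phi^{2,3}_{v^4u_3^ku_4^l}v^4_{z^2\oz^2}$;
\item[(iii)] couplings to targets with other $(k',l')$ satisfying $k'+l'=k+l$ are triangular; for example $\alpha^2_{U_3^{k+1}U_4^{l}}$, the prolongation of the $V^3_{Z^3\oZ}\alpha^2_{U_3}$ term.
\end{itemize}
Item 2 has the same structure: $\alpha^1_{U_4^{l+1}}$ enters only through earlier normalizations whose relations carry $V^4_{Z^2\oZ^2}\alpha^1_{U_4^{l+1}}$, coming from $\phi^1_{v^4u_4^l}v^4_{z^2\oz^2}$.

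Finally, the obstacle you single out is not one. Extra free forms from \eqref{MC-3} appearing in a phantom relation never block a normalization; they stay as parameters, as $\alpha^2_{U_3}$ does in the base case. Only the rank on the targets and triangularity matter. Your fallback of normalizing $\alpha^4_{U_4^{l+1}}$ via $\re V^3_{Z^3\oZ U_2U_4^l}$ is a Branch A normalization. It is not part of the Branch B cross-section, where $\alpha^4_{U_4}$ must stay free until the subbranches, so it cannot prove the lemma as stated.
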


At this stage of the process, the list of the remaining unnormalized Maurer-Cartan forms \eqref{MC-3} is reduced to
\begin{equation}\label{MC-B-before-branch}
\im \mu_{U_3^{k+1} U_4^l}, \qquad \mu_{U_4^l}, \qquad \omu_{U_4^l}, \qquad \alpha^2_{U_3^{k+1}}, \qquad \alpha^4_{U_4^l}.
\end{equation}

Among the recurrence relations \eqref{rec-ord-4}, then it remains just the consideration of the first relation which now is simplified to
\begin{equation}\label{rec-ord-4-remained-Branch-B}
\aligned
\\
\dt V^3_{Z^3\oZ}\equiv&\; V^3_{Z^3\oZ}\,\big(3\i \alpha^2_{U_3} - \frac{1}{4}\,\alpha^4_{U_4}\big).
\endaligned
\end{equation}
Then at this order, the normalization of the two real Maurer--Cartan forms $\alpha^2_{U_3}$ and $\alpha^4_{U_4}$ depends on weather the {\it relative} invariant $V^3_{Z^3\oZ}$ vanishes or not. Thus, we have to divide the process into the following two subbranches
\begin{description}
  \item[{\bf Branch B-1.}] If $V^3_{Z^3\oZ}\neq 0$,
  \item[{\bf Branch B-2.}] If $V^3_{Z^3\oZ} = 0$.
\end{description}

As in Branch {\bf A} and before proceeding along the above two subbranches, let us consider the following order five recurrence relations

\[
\aligned
\dt V^3_{Z^2\oZ{}^2U_1}&\equiv {\rm Im}\,\mu_{U_4}+\cdots,
\\
\dt V^4_{Z^2\oZ{}^2U_1}&\equiv -\frac{8}{3}\,\im\mu_{U_3}+\cdots,
\\
\dt V^4_{Z^2\oZ{}^2U_2}&\equiv {\rm Re}\,\mu_{U_4}+\cdots,
\endaligned
\]
where the $"\cdots"$ parts consist of the other Maurer--Cartan forms. By setting $V^3_{Z^2\oZ{}^2U_1}=V^4_{Z^2\oZ{}^2U_1}=V^4_{Z^2\oZ{}^2U_2} = 0$, these equations enable one to solve them for the Maurer-Cartan forms $\mu_{U_4}$ and $\im \mu_{U_3}$.

Following these normalizations, one also receives the recurrence relations
\[
\aligned
dV^1_{Z^2\oZ^2U_2}&\equiv -32\,\im \mu_{U_3^2}+\cdots,
\\
dV^2_{Z^2\oZ^2 U_1}&\equiv -\frac{4}{3}\,\alpha^2_{U_3^2}+\cdots,
\\
dV^4_{Z^2\oZ^2U_4}&\equiv \alpha^4_{U_4^2}+\cdots,
\endaligned
\]
which provide the normalization of the Maurer-Cartan forms $\re \mu_{U_2^2}, \alpha^2_{U_3^2}$ and $\alpha^4_{U_4^2}$ by setting $V^1_{Z^2\oZ^2U_3}=V^2_{Z^2\oZ^2 U_1}=V^4_{Z^2\oZ^2U_4}=0$. Summing up, we have more generally

\begin{Lemma}
For each $k, l \geq 0$, one can normalize the Maurer--Cartan forms
\renewcommand\labelenumi{\theenumi)}
\begin{enumerate}
 \item $\re \mu_{U_4^{l+1}}$ and $\im \mu_{U_4^{l+1}}$ by setting $V^4_{Z^2\oZ{}^2U_2 U_4^l}=0$ and $V^3_{Z^2\oZ{}^2U_1 U_4^l}=0$, respectively,
\item  $\im \mu_{U_3 U_4^l}$ by setting $V^4_{Z^2\oZ{}^2U_1 U_4^l}=0$,
\item   $\im \mu_{U_3^{j+2} U_4^l}$ by setting $V^1_{Z^2\oZ{}^2U_2 U_3^k U_4^l}=0$,
\item $\alpha^2_{U_3^{j+2}}$ by setting $V^2_{Z^2\oZ{}^2U_1 U_3^k}=0$,
\item $\alpha^4_{U_4^{l+2}}$ by setting $V^4_{Z^2\oZ{}^2U_4^{l+1}}=0$.
\end{enumerate}
\end{Lemma}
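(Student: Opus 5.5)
The plan follows the pattern of the proofs of Lemmas \ref{lem-ord-1} and \ref{lem-ord-2}. For each phantom invariant in the statement, I would track in the prolonged coefficient $\phi^{r;J}$ the single top-order vector field jet that invariantizes to the targeted Maurer--Cartan form. Then I would check that, after all previous normalizations ((\ref{partial-cross-sec}), Lemmas \ref{lem-Branch-B-alpha3-U-3} and \ref{lem-Branch-B-alpha-1,2,3-U-4}, and $V^4_{Z^2\oZ^2}=1$), this form enters the recurrence relation with a constant nonzero coefficient. Every other Maurer--Cartan form that appears should either be already normalized or be among the list (\ref{MC-B-before-branch}) but of strictly lower order in the $U_3,U_4$ directions.

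The base cases are the order five relations displayed just before the lemma. In them, $\im\mu_{U_4}$, $\re\mu_{U_4}$, $\im\mu_{U_3}$, $\im\mu_{U_3^2}$, $\alpha^2_{U_3^2}$ and $\alpha^4_{U_4^2}$ occur with coefficients $1,1,-\frac83,-32,-\frac43,1$. These come from the terms $v^1_{z\oz}\,\Dt(\ldots)$ and $v^4_{z^2\oz^2}\,\Dt(\ldots)$ in the prolongation formula (\ref{eq: prolongation formula}), combined with the determining equations (\ref{MC-relations}). For instance, $\mu_{V^r}=\i\mu_{U_r}$ and $\gamma^r_{U_s}=-\alpha^r_{V^s}$ convert $v$-derivatives of $\xi,\eta$ into the basis (\ref{Basis-MC-original}).

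For the induction, I would apply $\Dt_{U_3}^k\Dt_{U_4}^l$, using the form (\ref{eq: prolongation formula 2}) of the prolongation formula. Differentiating $\phi^{r;J}$ by $u_3$ or $u_4$ raises the index of the leading jet, for example $\xi_{u_4^{l+1}}\mapsto\xi_{u_4^{l+1}}$ or $\eta^2_{u_3^{k+2}}$. It keeps the same numerical coefficient, because that coefficient is a product of normalized constant invariants such as $V^1_{Z\oZ}=1$, $V^2_{Z^2\oZ}=1$, $V^3_{Z^2\oZ}=-\i$ and $V^4_{Z^2\oZ^2}=1$. The correction terms $-(\Dt_{\sf x}\xi)v^r_{J,z}-\cdots$ involve only lower-order jets of the vector field multiplied by higher jets of $v$. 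After invariantization these are expressible through Maurer--Cartan forms already normalized, either at earlier steps of the induction or in previous lemmas. So each new relation has the form $0\equiv c\,\nu+(\text{normalized})$ with $c\neq0$, and it can be solved for the new form $\nu$. For items 3) and 4) I would induct on $k$ with $l$ fixed (only $k$ appears in 4)), and for items 1), 2) and 5) on $l$. The real and imaginary parts in 1) are handled by taking the real relation for $V^4_{Z^2\oZ^2U_2U_4^l}$ and the complex relation for $V^3_{Z^2\oZ^2U_1U_4^l}$, after checking that $\re\mu$ and $\im\mu$ separate.

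The main obstacle is verifying that the normalizations are independent. Several of these recurrence relations share unnormalized forms. For example, $\im\mu_{U_3}$ also enters the relation of $V^1_{Z^2\oZ^2U_2}$, and $\mu_{U_4}$ appears in the relations of $V^3_{Z^2\oZ^2U_1}$ and $V^4_{Z^2\oZ^2U_2}$. One must therefore show that the linear system for the forms of a given order is triangular with nonzero diagonal. I would first solve all equations involving $\mu_{U_4^{l+1}}$ and $\im\mu_{U_3U_4^l}$, then substitute into the relations for $\im\mu_{U_3^{k+2}U_4^l}$, $\alpha^2_{U_3^{k+2}}$ and $\alpha^4_{U_4^{l+2}}$. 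At each stage I would confirm with the leading coefficients above that the diagonal entries stay nonzero.
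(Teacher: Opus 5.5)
Your route is the paper's own. The paper only displays the order-five recurrence relations and then states the lemma "more generally"; you propose to fill in the passage to all $k,l$ by prolongation, as in Lemmas~\ref{lem-ord-1}--\ref{lem-ord-2}. That passage, as you justify it, does not go through.

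\textbf{The coefficients are not inherited under shifts.} Apart from item 5), where $\alpha^4_{U_4^{l+2}}$ does occur raw via $v^4_{z^2\oz^2}\,\eta^4_{u_4^{l+2}}$, the targets never occur in $\phi^{r;J}$ with a constant coefficient.
\begin{itemize}
\item For instance, $\xi_{u_4}$ can enter $\phi^{3;z^2\oz^2u_1}$ with a constant coefficient only as $\xi_{v^4}v^4_{z^2\oz^2}$ and its conjugate. These multiply $v^3_{zu_1}=v^3_{\oz u_1}=0$.
\item The term $\im\mu_{U_4}$ in $\dt V^3_{Z^2\oZ^2U_1}$ appears only after the normalization $\alpha^3_{U_1U_4}\equiv V^3_{Z^2\oZ}\,\mu_{U_4}+V^3_{Z\oZ^2}\,\omu_{U_4}+\cdots$ of Lemma~\ref{lem-ord-2} is substituted into $v^4_{z^2\oz^2}\,\eta^3_{u_1u_4}$, and similarly for other chains.
\end{itemize}
So the numbers $1,-\frac{8}{3},-32,-\frac{4}{3}$ are outputs of an elimination through previously normalized forms. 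They are not products of $V^1_{Z\oZ},V^2_{Z^2\oZ},V^3_{Z^2\oZ},V^4_{Z^2\oZ^2}$ that $\Dt_{U_3},\Dt_{U_4}$ simply carry along.

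The raw model-level coefficients do commute with these shifts, since the model is $u$-independent. The elimination does not, because the cross-section is not shift-invariant:
\begin{itemize}
\item $\alpha^2_{U_3^2}$ is a target here, whereas $\alpha^2_{U_3^2U_4^l}$, $l\ge1$, was already fixed by $V^4_{Z^2\oZ U_1U_3^2U_4^{l-1}}$ in Lemma~\ref{lem-Branch-B-alpha-1,2,3-U-4}.
\item The $U_3$-shift $\mu_{U_3U_4}$ of $\mu_{U_4}$ is fixed by $V^1_{Z^2\oZ^2U_4}$ (Lemma~\ref{lem-ord-4}) and $V^4_{Z^2\oZ^2U_1U_4}$ (item 2). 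It is not fixed by the shifts of $V^4_{Z^2\oZ^2U_2}$ and $V^3_{Z^2\oZ^2U_1}$.
\end{itemize}
Hence, for example, the coefficient of $\alpha^2_{U_3^3}$ in $\dt V^2_{Z^2\oZ^2U_1U_3}$ after elimination is not inherited from $-\frac{4}{3}$ and must be computed afresh.

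\textbf{The system is not triangular in the sense you need.} Give $z,u_1,u_2,u_3,u_4$ the weights $1,2,3,3,4$. Then $V^r_J$, $\mu_{U^\ell}$, $\alpha^r_{U^\ell}$ have weights $[u_r]-[J]$, $1-[\ell]$, $[u_r]-[\ell]$. The targets of items 1)--5) have weights $-3-4l$, $-2-4l$, $-5-3k-4l$, $-3-3k$, $-4-4l$, and these collide across items:
\begin{itemize}
\item $\mu_{U_4}$ and $\alpha^2_{U_3^2}$ at weight $-3$;
\item $\im\mu_{U_3U_4}$ and $\alpha^2_{U_3^3}$ at weight $-6$;
\item $\mu_{U_4^3}$ and $\im\mu_{U_3^4}$ at weight $-11$;
\item $\alpha^4_{U_4^{l+3}}$ and $\im\mu_{U_3^3U_4^l}$ at weight $-8-4l$.
\end{itemize}
Such forms may be coupled by constant coefficients in both directions, so a nonzero diagonal does not give solvability. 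Solving items 1), 2) first and substituting, as you propose, is legitimate elimination. But the effective coefficients of the remaining targets are then Schur complements, and these must be recomputed at every weight. Your phrase ``strictly lower order in the $U_3,U_4$ directions'' does not control this.

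\textbf{The missing idea is the weight grading itself.} By invariance under the dilations \eqref{dilation}:
\begin{itemize}
\item every $\dt V^r_J$ is weighted homogeneous;
\item all nonzero phantom constants have weight $0$;
\item all unnormalized invariants have negative weight.
\end{itemize}
Hence constant coefficients link only forms of the weight of $V^r_J$. Non-constant coefficients bring in only forms of strictly larger weight, which are either already solved or the free forms $\alpha^2_{U_3},\alpha^4_{U_4}$. Solving by decreasing weight gives a block-triangular system whose diagonal blocks are constant matrices from the linearized action at the model $M(0,\frac{1}{4})$.

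What has to be shown is that each such block is invertible for the chosen phantom invariants, after eliminating the earlier normalized forms of the same weight. This is a finite check per weight. It must actually be carried out, or organized into families along which the cross-section is shift-invariant; it cannot be read off from the order-five data.

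Minor point: $V^3_{Z^2\oZ^2U_1U_4^l}$ has as many $Z$'s as $\oZ$'s, so it is real. Item 1 therefore uses two real relations, not a complex one.
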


Thank to this lemma, the list \eqref{MC-B-before-branch} of the not yet normalized Maurer--Cartan forms is now reduced to only two
\begin{equation}\label{MC-B-B}
\alpha^2_{U_3} \qquad {\rm and} \qquad \alpha^4_{U_4}.
\end{equation}
We emphasize at this stage that we have finished construction of the desired complete normal form of Branch {\bf B} which was introduced in Theorem \ref{main-result}.
Also remark that after applying the above normalizations, the relations \eqref{relation-ord-5} are now simplified to
\begin{equation}\label{relation-ord-5-simplified}
\aligned
  V^4_{Z^4\oZ}=-\frac{\rm i}{2}\,V^3_{Z^3\oZ}, \qquad V^4_{Z^3\oZ{}^2}=\frac{\rm i}{2}\,V^3_{Z^3\oZ}, \qquad  V^4_{Z^3\oZ U_j} &= 0 \qquad j=1,\ldots,4.
\endaligned
\end{equation}
These relations will be helpful in simplifying the subsequent computations.

\subsection{Branch B-1}

Under the assumption $V^3_{Z^3\oZ} \neq 0$, we set this relative invariant to $\i$ and apply the recurrence relation \eqref{rec-ord-4-remained-Branch-B} to normalize the two remaining real Maurer--Cartan forms $\alpha^4_{U_4}$ and $\alpha^2_{U_3}$. This finishes the process of normalizations in this branch, yielding a complete moving frame. The normal form, corresponding to this moving frame is unique modulo the action of a discrete isotropy group at the origin. To summarize, we have

\begin{Theorem}
  Every $6$-dimensional totally nondegenerate submanifold of $\mathbb C^4$, belonging to Branch {\bf B-1}, can be mapped through some origin-preserving transformation, which is unique up to the action of some discrete isotropy group at the origin, to the normal form
  \begin{equation*}
\aligned
v^1&=z\oz+\sum_{j+k+|\ell|\geq 5} \frac{V^1_{Z^j\oZ{}^k U^\ell}}{j! k! \ell!} z^j\oz^k u^\ell,
\\
v^2&=\frac{1}{2}\,(z^2\oz+z\oz^2)+\sum_{j+k+|\ell|\geq 5} \frac{V^2_{Z^j\oZ{}^k U^\ell}}{j! k! \ell!} z^j\oz^k u^\ell,
\\
v^3&=-\frac{{\rm i}}{2}\,(z^2\oz-z\oz^2)+\frac{\i}{6} \big(z^3\oz- z\oz^3\big)
 +\sum_{j+k+|\ell|\geq 5} \frac{V^3_{Z^j\oZ{}^k U^\ell}}{j! k! \ell!} z^j\oz^k u^\ell,
\\
v^4&=\frac{1}{4} z^2\oz^2+\sum_{j+k+|\ell|\geq 5} \frac{V^4_{Z^j\oZ{}^k U^\ell}}{j! k! \ell!} z^j\oz^k u^\ell,
\endaligned
\end{equation*}
enjoying the relations \eqref{relation-ord-5-simplified} and their prolongations with $V^3_{Z^3\oZ}=\i$. Regarding the conjugation relation, the coefficients $V_J$ of this normal form enjoy the cross-section $\mathcal K$ in \eqref{partial-cross-sec} together with
\begin{equation}\label{normalizations-B-extra}
\aligned
0 &= V^4_{Z^2\oZ{}^2U_3^{k+1}}=V^4_{Z^2\oZ U_1U_3^kU_4^l}=\im V^4_{Z^2\oZ U_2U_4^l}=V^4_{Z^2\oZ{}^2 U_1U_4^l}=V^4_{Z^2\oZ{}^2 U_2U_4^l}=V^3_{Z^2\oZ{}^2 U_1U_4^l}
\\
&=V^1_{Z^2\oZ^2 U_2 U_3^k U_4^l}=V^2_{Z^2\oZ^2 U_1 U_3^k}=V^4_{Z^2\oZ^2U_4^{l+1}},
\endaligned
\end{equation}
for $k,l\geq 0$. In addition, the isotropy group of $M$ at the origin is zero dimensional.
\end{Theorem}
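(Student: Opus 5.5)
The plan is to assemble the Branch \textbf{B-1} normal form from the normalizations already made and then check that the moving frame is complete. I would list every phantom invariant fixed so far: the order zero normalizations \eqref{order 0 normalizations}, the constraints of Lemmas \ref{lem-ord-1}, \ref{lem-ord-2}, \ref{lem-ord-3}, \ref{lem-ord-4} collected in \eqref{partial-cross-sec}, and the Branch \textbf{B} normalizations of Lemmas \ref{lem-Branch-B-alpha3-U-3} and \ref{lem-Branch-B-alpha-1,2,3-U-4} together with the subsequent order five lemma, which give \eqref{normalizations-B-extra}. Adding the two Branch \textbf{B-1} choices $V^4_{Z^2\oZ{}^2}=1$ (rescaled to the coefficient $\frac14$ through the factor $\frac{1}{2!\,2!}$) and $V^3_{Z^3\oZ}=\i$, the Taylor expansion \eqref{normal form} at the origin has exactly the displayed lower order part. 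Indeed, $V^3_{Z^3\oZ}=\i$ produces $\frac{\i}{6}z^3\oz$ and, by conjugation, $-\frac{\i}{6}z\oz^3$. All remaining coefficients of order $\le 4$ are either phantom or forced to vanish: $V^4_{Z^3\oZ}=0$ is the branch hypothesis, and $V^4_{Z^2\oZ U_1}$ and $\im V^4_{Z^2\oZ U_2}$ are set to zero in Lemma \ref{lem-Branch-B-alpha-1,2,3-U-4}, with $\re V^4_{Z^2\oZ U_2}$ already zero. The order five constraints \eqref{relation-ord-5-simplified} with $V^3_{Z^3\oZ}=\i$ come from the coefficients of the horizontal forms in $\dt V^4_{Z^3\oZ}=0$. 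Differentiating that identity further by the recurrence formula \eqref{eq: recurrence relation} gives their prolongations.

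Next I would verify that the frame is complete. After the lemma preceding Branch \textbf{B-1}, the only unnormalized Maurer--Cartan forms are $\alpha^2_{U_3}$ and $\alpha^4_{U_4}$, as listed in \eqref{MC-B-B}. Setting $V^3_{Z^3\oZ}=\i$ in \eqref{rec-ord-4-remained-Branch-B} gives
\[
0\equiv \i\big(3\i\alpha^2_{U_3}-\tfrac14\alpha^4_{U_4}\big)=-3\alpha^2_{U_3}-\tfrac{\i}{4}\alpha^4_{U_4}.
\]
Separating the real and imaginary parts, which is possible because both forms are real, yields $\alpha^2_{U_3}\equiv 0$ and $\alpha^4_{U_4}\equiv 0$ modulo the horizontal coframe. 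This normalization is admissible in the branch because $V^3_{Z^3\oZ}$ is a relative invariant, so its nonvanishing is invariant. Its modulus and argument can then be moved to $\i$ by the real scaling driven by $\alpha^4_{U_4}$ and the rotation driven by $\alpha^2_{U_3}$.

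With every Maurer--Cartan form in the basis \eqref{Basis-MC-original} now normalized, the moving frame is a genuine (not partial) equivariant moving frame. By \cite{Olver-Pohjanpelto-08, Valiquette-SIGMA}, the normal form transformation is therefore unique up to the isotropy group of the cross-section, and that group is zero dimensional because no continuous group parameter survives. Existence of the normalizing transformation follows from solving the normalization equations \eqref{norm eq} order by order. Each step is solvable because the recurrence relation of the phantom invariant contains the corresponding Maurer--Cartan form with a nonzero constant coefficient.

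The main obstacle is bookkeeping in the prolonged statements, the ``more generally'' lemmas. Each prolonged normalization must involve the targeted group jet linearly with a nonzero coefficient modulo lower order normalized forms, and the chosen phantom sets must not collide. I would handle this by tracking leading terms via \eqref{eq: prolongation formula}, as in the proof of Lemma \ref{lem-ord-2}. I would also check that the residual discrete ambiguity, for example the choice of $\pm\i$, or the swap $Z\leftrightarrow\oZ$, which preserves the earlier normalizations, only accounts for the discrete isotropy.
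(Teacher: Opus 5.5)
Your proposal is correct and follows essentially the same route as the paper. After the earlier normalizations only $\alpha^2_{U_3}$ and $\alpha^4_{U_4}$ remain, and setting the relative invariant $V^3_{Z^3\oZ}=\i$ in \eqref{rec-ord-4-remained-Branch-B} normalizes both real forms, giving a complete moving frame whose normal form is unique up to a discrete isotropy group; your bookkeeping of the order $\le 4$ terms and of \eqref{relation-ord-5-simplified} just makes explicit what the paper leaves implicit. The one inaccuracy is your aside about a ``$\pm\i$'' choice: in this branch $\alpha^2_{U_3}$ moves the phase of $V^3_{Z^3\oZ}$ freely, so neither a sign choice nor a $Z\leftrightarrow\oZ$ swap is needed to reach $\i$.
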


\subsection{Branch B-2}

The assumption $V^3_{Z^3\oZ} = 0$ of this branch prevents us from normalizing the remaining Maurer--Cartan forms \eqref{MC-B-B} in order four. Nevertheless, it brings some computationally facilitating outputs. Indeed, the left hand sides of the six equations in \eqref{relation-ord-5-simplified} vanish identically now. Consequently, the homogeneous system formed by the coefficients of the horizontal forms in the recurrence relations of these equations, combined with those arising from $dV^3_{Z^3\oZ}=0$, surprisingly reveals the solution
\begin{equation}\label{relation-order-s-3}
\aligned
&\re V^2_{Z^3\oZ{}^2}=-\im V^3_{Z^3\oZ{}^2}=\frac{1}{2}\,V^4_{Z^3\oZ^3},
\\
& V^2_{Z^4\oZ}=\i V^3_{Z^4\oZ}=V^4_{Z^4\oZ^2}
\\
&\re V^3_{Z^3\oZ^2}=\im V^2_{Z^3\oZ^2}=V^3_{Z^3\oZ U_j}=V^4_{Z^5\oZ}=V^4_{Z^3\oZ^2U_j}=V^4_{Z^3\oZ U_jU_k}=V^4_{Z^4\oZ U_j}= 0,
\endaligned
\end{equation}
for $j, k=1,\ldots,4$. These equations significantly simplify upcoming computations at orders five and six. Additionally, we observe that applying the recurrence formula on the first two lines of \eqref{relation-order-s-3} gives in turn some further relations in order six --- for saving the space, we do not present them here.

Now, taking into account the normalizations applied in the former orders along with the relations \eqref{relation-ord-5-simplified} and \eqref{relation-order-s-3}, it remains in order five to consider the following ten recurrence relations
\begin{align}
\dt V^1_{Z^3\oZ{}^2}&\equiv -\frac{1}{4}\,V^1_{Z^3\oZ{}^2}\,\big(3\,\alpha^4_{U_4}-4{\rm i}\,\alpha^2_{U_3}\big),\nonumber
\\
\dt V^1_{Z^2\oZ{}^2 U_1}&\equiv -V^1_{Z^2\oZ{}^2U_1}\,\alpha^4_{U_4}+\frac{1}{3}\,\big(24\,\im V^3_{Z^2\oZ{}U_1^2}-10\,V^3_{Z^2\oZ^2 U_2}+88\,\re V^4_{Z^2\oZ{}U_1U_2}\big)\,\alpha^2_{U_3},\nonumber
\\
\dt (\im V^3_{Z^3\oZ{}^2})&\equiv -\frac{1}{2}\,\im V^3_{Z^3\oZ{}^2}\,\alpha^4_{U_4}, \label{Rec-rel-ord-5-B3}
\\
\dt V^3_{Z^2\oZ{}^2 U_2}&\equiv -V^3_{Z^2\oZ{}^2 U_2}\,\alpha^4_{U_4}+8\,\im V^4_{Z^2\oZ U_1U_2}\,\alpha^2_{U_3},\nonumber
\\
\dt V^3_{Z^2\oZ{} U_1^2}&\equiv -V^3_{Z^2\oZ U_1^2}\,\alpha^4_{U_4}+\big(2\i V^3_{Z^2\oZ U_1^2}-4\i \im V^4_{Z^2\oZ U_1U_2}-V^3_{Z^2\oZ^2 U_2}\big)\,\alpha^2_{U_3}, \nonumber
\\
\dt V^3_{Z^4\oZ}&\equiv -\frac{1}{2}\,V^3_{Z^4\oZ}\,\big(\alpha^4_{U_4}-4\i \alpha^2_{U_3}\big), \nonumber
\\
\dt V^4_{Z^2\oZ U_1^2}&\equiv -\frac{1}{4}\,V^4_{Z^2\oZ U_1^2}\,\big(3\,\alpha^4_{U_4}-4\i \alpha^2_{U_3}\big),\nonumber
\\
\dt V^4_{Z^2\oZ U_1U_2}&\equiv -V^4_{Z^2\oZ U_1U_2}\,\alpha^4_{U_4}+\frac{\rm i}{2}\,\big(4\,\re V^4_{Z^2\oZ U_1U_2}-V^3_{Z^2\oZ{}^2 U_2}\big)\, \alpha^2_{U_3},\nonumber
\\
\dt (\im V^4_{Z^2\oZ U_2U_2})&\equiv -\frac{5}{4}\,\im V^4_{Z^2\oZ U_2U_2}\,\alpha^4_{U_4}+\im V^4_{Z^2\oZ U_2U_3}\, \alpha^2_{U_3}.\nonumber
\\
\dt (\im V^4_{Z^2\oZ U_2U_3})&\equiv -\frac{5}{4}\,\im V^4_{Z^2\oZ U_2U_3}\,\alpha^4_{U_4}-\im V^4_{Z^2\oZ U_2U_2}\, \alpha^2_{U_3}.\nonumber
\end{align}

One finds straightforwardly from these relations several lifted invariants which are of relative type. In addition to them and upon close inspection of the last two equations, it turns out that for $\Gamma:=(\im V^4_{Z^2\oZ U_2U_2})^2+(\im V^4_{Z^2\oZ U_2U_3})^2$ we have
\[
d\Gamma\equiv-\frac{5}{2}\,\Gamma\,\alpha^4_{U_4}.
\]
Hence, $\Gamma$ is a relative invariant, as well. Therefore, the real lifted invariants $\im V^4_{Z^2\oZ U_2U_2}$ and $\im V^4_{Z^2\oZ U_2U_3}$ are identically either zero or nonzero.

Based on a careful inspection of the above recurrence relations \eqref{Rec-rel-ord-5-B3}, we organize the subsequent computations through the following three subbranches
\begin{description}
  \item[{\bf Branch B-2-1.}] If one of the following statements holds:
 \begin{description}
   \item[a)] Either of the five {\it relative} invariants $V^1_{Z^3\oZ{}^2}$, $V^3_{Z^4\oZ}$, $V^4_{Z^2\oZ U_1^2}$, $\re V^4_{Z^2\oZ U_1U_2}$ and $\Gamma$ does not vanish at the origin.
   \item[b)] The invariants in {\bf (a)} vanish locally but $\Lambda:=(V^3_{Z^2\oZ{}^2 U_2})^2+16(\im V^4_{Z^2\oZ U_1U_2})^2$ is nonzero at the origin.
   \item[c)] The lifted invariants in {\bf (a)} and {\bf (b)}  vanish locally but $V^3_{Z^2\oZ U_1^2}$ --- which is now a relative invariant --- is nonzero at the origin.
 \end{description}
  \item[{\bf Branch B-2-2.}] When the appearing invariants throughout Branch {\bf B-2-1} vanish locally but at least one of the two real invariants $V^1_{Z^2\oZ{}^2 U_1}$, $\im V^3_{Z^3\oZ{}^2}$ --- which are now relative --- is nonzero at the origin.
  \item[{\bf Branch B-2-3.}] When all the nine invariants appearing in \eqref{Rec-rel-ord-5-B3} vanish, identically.
\end{description}

Before proceeding along these branches, let us emphasize in the subbranch {\bf B-2-1}{\bf (b)} --- where the real invariant $\re V^4_{Z^2\oZ U_1U_2}$ vanishes --- that the recurrence relations of $V^3_{Z^2\oZ{}^2 U_2}$ and $\im V^4_{Z^2\oZ U_1U_2}$ take the form
\[
\aligned
\dt V^3_{Z^2\oZ{}^2 U_2}&\equiv -V^3_{Z^2\oZ{}^2 U_2}\,\alpha^4_{U_4}+8\,\im V^4_{Z^2\oZ U_1U_2}\,\alpha^2_{U_3},
\\
\dt(\im V^4_{Z^2\oZ U_1U_2})&\equiv -\frac{1}{2}\,V^3_{Z^2\oZ{}^2 U_2}\, \alpha^2_{U_3}-\im V^4_{Z^2\oZ U_1U_2}\,\alpha^4_{U_4}.
\endaligned
\]
Thus, by applying necessary computations, one finds that
\[
\dt \Lambda=-2\,\Lambda\,\alpha^4_{U_4},
\]
verifying that $\Lambda$ is a relative invariant. Then, by the expression of $\Lambda$, both  the real invariants $V^3_{Z^2\oZ{}^2 U_2}$ and $\im V^4_{Z^2\oZ U_1U_2}$ are identically nonzero in the subbranch {\bf B-2-1}{\bf (b)}.

Among the above subbranches, it is expected that {\bf B-2-2} will potentially originate several further subbranches. Then, to keep this paper from becoming lengthy, let us defer this branch to another investigation and consider here the first and third branches. However, notice that in this branch which one of the invariants $V_\circ=V^1_{Z^2\oZ{}^2 U_1}$, $V^2_{Z^3\oZ{}^2}$ is nonzero, one can normalize $\alpha^4_{U_4}$ by setting $\| V_\circ \|=1$. Hence, the isotropy group of each surface in Branch {\bf B-2-2} is either trivial or of dimension one.

\subsubsection{\bf Branch B-2-1}

In all subbranches {\bf (a-c)} of this branch, it is possible to normalize the remaining Maurer--Cartan forms $\alpha^2_{U_3}$ and $\alpha^4_{U_4}$. The following lemma describes how to effect these normalizations by appropriate specification of differential invariants.

\begin{Lemma}
\label{lem-branch-B-3-1}
Throughout Branch {\bf B-2-1}, one can normalize the two remaining Maurer-Cartan forms $\alpha^2_{U_3}$ and $\alpha^4_{U_4}$ via one of the following ways
 \renewcommand\labelenumi{\theenumi)}
\begin{enumerate}
  \item in subbranches {\bf (a)} and {\bf (c)}, when any of the relative invariants $V_\diamond=V^1_{Z^3\oZ{}^2}, V^3_{Z^4\oZ}, V^4_{Z^2\oZ U_1^2}, V^3_{Z^2\oZ U_1^2}$ is nonzero, then by solving its corresponding recurrence relation after setting $V_\diamond=\i$.
  \item in subbrach {\bf (a)}, if $\re V^4_{Z^2\oZ U_1 U_2}$ is nonzero at the origin, then by setting $V^4_{Z^2\oZ U_1U_2}=\varepsilon+0\i$ and solving its recurrence relation, where $\varepsilon$ is a nonzero real number with $V^3_{Z^2\oZ^2U_2}({\bf 0})\neq 4\,\varepsilon$.
  \item  in subbranches {\bf (a)} and {\bf (b)}, if either $\Gamma$ or $\Lambda$ is nonzero, then by setting respectively $(\im V^4_{Z^2\oZ U_2U_2}, \im V^4_{Z^2\oZ U_2U_3})=(1,1)$ or $(\im V^3_{Z^2\oZ^2 U_2}, \im V^4_{Z^2\oZ U_1U_2})=(1,1)$ and solving the corresponding recurrence relations.
\end{enumerate}
\end{Lemma}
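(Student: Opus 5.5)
The lemma follows directly from the ten recurrence relations \eqref{Rec-rel-ord-5-B3}. Each relation is linear, modulo the horizontal coframe, in the two remaining real Maurer--Cartan forms $\alpha^2_{U_3}$ and $\alpha^4_{U_4}$. Normalizing both amounts to finding, in each case, an invariant (or pair of real invariants) whose prescribed constant values make the associated $2\times 2$ real linear system in $(\alpha^2_{U_3},\alpha^4_{U_4})$ nondegenerate. I would treat the three items separately and, in each, write the phantom recurrence as $0\equiv$ (matrix)$\cdot(\alpha^2_{U_3},\alpha^4_{U_4})^T$ and check that the determinant is nonzero.

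For item 1, every listed $V_\diamond$ is complex and has a recurrence of the form $\dt V_\diamond\equiv V_\diamond(c\,\alpha^4_{U_4}+\i d\,\alpha^2_{U_3})$ with nonzero real $c$ and $d$. For $V^1_{Z^3\oZ^2}$ these are $c=-\tfrac34$, $d=1$. For $V^3_{Z^4\oZ}$ they are $c=-\tfrac12$, $d=2$. For $V^4_{Z^2\oZ U_1^2}$ they are $c=-\tfrac34$, $d=1$. For $V^3_{Z^2\oZ U_1^2}$ in subbranch (c), the extra terms $-4\i\im V^4_{Z^2\oZ U_1U_2}-V^3_{Z^2\oZ^2U_2}$ vanish by the hypotheses of (c), which leaves $c=-1$, $d=2$. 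Since $V_\diamond\neq0$, the remaining group freedom acts on it by a complex dilation $e^{c s+\i d t}$, so it can be set to $\i$. Setting $V_\diamond=\i$ and taking real and imaginary parts of $0\equiv \i(c\,\alpha^4_{U_4}+\i d\,\alpha^2_{U_3})$ gives $c\,\alpha^4_{U_4}\equiv0$ and $d\,\alpha^2_{U_3}\equiv0$, which normalizes both forms.

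For item 2, put $V^4_{Z^2\oZ U_1U_2}=\varepsilon$, a nonzero real constant. The real part of its recurrence yields $\varepsilon\,\alpha^4_{U_4}\equiv0$. The imaginary part yields $\tfrac12(4\varepsilon-V^3_{Z^2\oZ^2U_2})\alpha^2_{U_3}\equiv0$ once $\alpha^4_{U_4}$ is removed. The condition $V^3_{Z^2\oZ^2U_2}(\mathbf 0)\neq4\varepsilon$ makes this coefficient nonvanishing near the origin. It remains to justify that $\re V^4_{Z^2\oZ U_1U_2}$ can be brought to $\varepsilon$ while the imaginary part is killed. I would do this by noting that $\re V^4_{Z^2\oZ U_1U_2}$ is relative under $\alpha^4_{U_4}$, and that $\alpha^2_{U_3}$ acts on the imaginary part by translation with nonzero coefficient.

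For item 3, I would use the rotation structure. Under $\alpha^2_{U_3}$ the pair $(\im V^4_{Z^2\oZ U_2U_2},\im V^4_{Z^2\oZ U_2U_3})$ rotates, and under $\alpha^4_{U_4}$ it scales by the weight $-\tfrac54$. So if $\Gamma\neq0$ the pair can be moved to $(1,1)$. The resulting linear system in $(\alpha^2_{U_3},\alpha^4_{U_4})$ has matrix $\begin{pmatrix}1&-\tfrac54\\-1&-\tfrac54\end{pmatrix}$, whose determinant is $-\tfrac52\neq0$. In case (b) the pair $(V^3_{Z^2\oZ^2U_2},\im V^4_{Z^2\oZ U_1U_2})$ behaves the same way, using the recurrences displayed just before this lemma. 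With both set equal to $1$, the matrix is $\begin{pmatrix}8&-1\\-\tfrac12&-1\end{pmatrix}$ with determinant $-\tfrac{17}{2}\neq0$. Here $\re V^4_{Z^2\oZ U_1U_2}=0$ by hypothesis (b), and $\Lambda\neq0$ guarantees the orbit reaches $(1,1)$.

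The main obstacle is confirming that the chosen constant values actually lie in the group orbit, so that the normalization equations are solvable and not merely formally consistent. For item 3 this means checking that the real $2$-parameter action, a rotation of speed ratio $4$ combined with scaling, is transitive on the punctured plane; this holds because scaling and rotation together act transitively on $\mathbb R^2\setminus\{0\}$. For item 2 one must verify that the affine action stays transversal near $V^3_{Z^2\oZ^2U_2}=4\varepsilon$, which is exactly why that value is excluded.
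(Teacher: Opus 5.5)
Your proposal is correct and is essentially the argument the paper leaves implicit: you read off the $2\times 2$ real linear systems in $(\alpha^2_{U_3},\alpha^4_{U_4})$ from \eqref{Rec-rel-ord-5-B3} and from the two displayed relations for $V^3_{Z^2\oZ^2U_2}$ and $\im V^4_{Z^2\oZ U_1U_2}$. Your coefficients and determinants ($-\tfrac52$, $-\tfrac{17}{2}$) check out, and you rightly read the paper's $\im V^3_{Z^2\oZ^2U_2}$ as the real invariant $V^3_{Z^2\oZ^2U_2}$.

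One small correction in item 2: $\alpha^2_{U_3}$ does not act on $\im V^4_{Z^2\oZ U_1U_2}$ by a translation, because $V^3_{Z^2\oZ^2U_2}$ moves as well. Instead it rotates the pair $(\im V^4_{Z^2\oZ U_1U_2},\,V^3_{Z^2\oZ^2U_2}-4\re V^4_{Z^2\oZ U_1U_2})$ along ellipses that preserve $16(\im V^4_{Z^2\oZ U_1U_2})^2+(V^3_{Z^2\oZ^2U_2}-4\re V^4_{Z^2\oZ U_1U_2})^2$. This still lets you reach $\im V^4_{Z^2\oZ U_1U_2}=0$, and it shows that the excluded case $V^3_{Z^2\oZ^2U_2}=4\varepsilon$ is exactly the vanishing of this relative invariant.
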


Then in this branch, we succeed in constructing a complete moving frame which corresponds to a unique normal form modulo some discrete isotropy group.

\begin{Theorem}
  Let $M\subset\mathbb C^4$ be a $6$-dimensional totally nondegenerate manifold belonging to Branch {\bf B-2-1}. Then $M$ can be mapped through some origin-preserving transformation, which is unique up to the action of some discrete isotropy group at the origin, to the normal form
  \begin{equation*}
\aligned
v^1&=z\oz+\sum_{j+k+|\ell|\geq 5} \frac{V^1_{Z^j\oZ{}^k U^\ell}}{j! k! \ell!} z^j\oz^k u^\ell,
\\
v^2&=\frac{1}{2}\,(z^2\oz+z\oz^2)+\sum_{j+k+|\ell|\geq 5} \frac{V^2_{Z^j\oZ{}^k U^\ell}}{j! k! \ell!} z^j\oz^k u^\ell,
\\
v^3&=-\frac{{\rm i}}{2}\,(z^2\oz-z\oz^2)+\sum_{j+k+|\ell|\geq 5} \frac{V^3_{Z^j\oZ{}^k U^\ell}}{j! k! \ell!} z^j\oz^k u^\ell,
\\
v^4&=\frac{1}{4} z^2\oz^2+\sum_{j+k+|\ell|\geq 5} \frac{V^4_{Z^j\oZ{}^k U^\ell}}{j! k! \ell!} z^j\oz^k u^\ell,
\endaligned
\end{equation*}
enjoying the relations \eqref{relation-ord-5-simplified} and \eqref{relation-order-s-3} with $V^3_{Z^3\oZ}=0$. Regarding the conjugation relation, the coefficients $V_J$ of this normal form enjoy the cross-section $\mathcal K$ in \eqref{partial-cross-sec}, the normalizations \eqref{normalizations-B-extra} and the normalizations described in Lemma \ref{lem-branch-B-3-1}.
\end{Theorem}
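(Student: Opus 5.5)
The normal form for Branch B-2-1 is read off from the equivariant moving frame constructed so far, so I will argue as in the earlier branch theorems. I assemble the cross-section: the universal normalizations \eqref{partial-cross-sec} from Lemmas \ref{lem-ord-1}--\ref{lem-ord-4}, the Branch B normalizations \eqref{normalizations-B-extra}, and finally the two normalizations of Lemma \ref{lem-branch-B-3-1}. At that point every Maurer--Cartan form in the basis \eqref{Basis-MC-original} is expressed, modulo the horizontal coframe, in terms of the invariant horizontal forms. By the recurrence relations \eqref{rec-formula}, this means the moving frame is a complete (full) moving frame on the invariant subset of the jet space defined by the branch hypotheses.

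The normal form power series \eqref{normal form} is then the Taylor expansion of the transformed submanifold at the origin, with the phantom invariants replaced by their normalized values. Concretely:
\begin{itemize}
\item $V^1_{Z\oZ}=1$, $V^2_{Z^2\oZ}=1$ and $V^3_{Z^2\oZ}=-\i$ produce the cubic parts of $v^1,v^2,v^3$ after symmetrization under conjugation.
\item $V^4_{Z^2\oZ^2}=1$ gives the term $\frac14 z^2\oz^2$.
\item $V^4_{Z^3\oZ}=V^3_{Z^3\oZ}=0$ in B-2 removes the other fourth-order terms.
\item The vanishing of $V^4_{Z^2\oZ U_1}$, $\im V^4_{Z^2\oZ U_2}$ and the remaining fourth-order coefficients comes from Lemmas \ref{lem-Branch-B-alpha3-U-3}--\ref{lem-Branch-B-alpha-1,2,3-U-4} and \eqref{relation-ord-5}.
\end{itemize}
The relations \eqref{relation-ord-5-simplified} and \eqref{relation-order-s-3} are not normalizations. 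They are syzygies forced by the identically vanishing invariants: I obtain them by equating to zero the horizontal coefficients in $\dt V^4_{Z^3\oZ}=0$ and $\dt V^3_{Z^3\oZ}=0$, with $V^3_{Z^3\oZ}=0$ substituted. So they hold for the normal form coefficients automatically.

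Existence of the transformation uses the standard moving frame argument. The normalization equations \eqref{norm eq} are solved order by order for the pseudo-group jets, since at each stage the relevant recurrence relation is linear and nondegenerate in the Maurer--Cartan form being normalized. This produces a formal biholomorphism whose jet at the origin is determined order by order. Convergence is deferred to Section \ref{sec-convergence}, as in the other branches.

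For uniqueness, suppose two normal form transformations exist. Their composition preserves the cross-section, so its jet lies in the isotropy of the frame. Since all Maurer--Cartan forms are normalized, the isotropy is zero-dimensional and can only arise from discrete choices. These are the sign and rotation ambiguities in solving $\|V_\diamond\|$-type or $V_\diamond=\i$ normalizations in Lemma \ref{lem-branch-B-3-1}, together with the permutation $Z\leftrightarrow\oZ$.

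The main obstacle is checking Lemma \ref{lem-branch-B-3-1} case by case: in each subcase the $2\times2$ real system in $(\alpha^2_{U_3},\alpha^4_{U_4})$ must be nondegenerate. For instance, for $V^4_{Z^2\oZ U_1U_2}=\varepsilon$ the determinant involves $4\varepsilon-V^3_{Z^2\oZ^2U_2}$, which explains the condition $V^3_{Z^2\oZ^2U_2}(\mathbf 0)\neq4\varepsilon$. For $\Gamma$ and $\Lambda$, the pair forms a rotation-dilation system whose determinant is a nonzero multiple of $\Gamma$ or $\Lambda$ respectively. I would verify each of these determinants directly from \eqref{Rec-rel-ord-5-B3}.
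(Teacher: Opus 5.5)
Your proposal is correct and follows the paper's argument. Once Lemma~\ref{lem-branch-B-3-1} normalizes the last two forms $\alpha^2_{U_3},\alpha^4_{U_4}$ of \eqref{MC-B-B}, the moving frame is complete, so the normal form is read off from the cross-section and is unique up to a discrete isotropy group; your determinant checks for that lemma (proportional to $\varepsilon(4\varepsilon-V^3_{Z^2\oZ^2U_2})$, $\Gamma$ and $\Lambda$) are right.

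Two small corrections:
\begin{enumerate}
\item The order-six entries of \eqref{relation-order-s-3} (such as $V^4_{Z^5\oZ}$, $V^4_{Z^4\oZ^2}$, $V^4_{Z^3\oZ^3}$) cannot come from $\dt V^4_{Z^3\oZ}=0$ and $\dt V^3_{Z^3\oZ}=0$ alone, which only give order-five relations. They come from the recurrence relations of the identically vanishing order-five invariants in \eqref{relation-ord-5-simplified}.
\item The swap $Z\leftrightarrow\oZ$ is antiholomorphic, so it cannot belong to the isotropy group. The discrete ambiguity comes from the residual rotation parameter.
\end{enumerate}
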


\subsubsection{\bf Branch B-2-3}
\label{subsec-B-2-3}

Although vanishing differential invariants appearing in \eqref{Rec-rel-ord-5-B3} precludes to normalize the remaining two Maurer--Cartan forms in order five, solving the homogeneous system constituted by the coefficients of $\omega^Z, \omega^{\oZ}, \omega^{r}, r=1,\ldots,4$ in these relations reveals some crucial information. In particular, it shows that just a few lifted invariants remain nonzero in order six, among them we have
\begin{equation*}
\aligned
V^1_{Z^2\oZ^2U_1^2}=-\frac{\i}{3}\, V^1_{Z^3\oZ^2U_2}=2\,V^2_{Z^2\oZ^2U_1U_2}=4\, V^3_{Z^2\oZ U_1^3}=2\,V^4_{Z^2\oZ^2U_2^2}=-8\i V^4_{Z^2\oZ U_1^2U_2}=-8\i V^4_{Z^2\oZ U_2^3}.
\endaligned
\end{equation*}

The recurrence relation of the first lifted invariant is
\begin{equation}\label{rec-rel-B-3-3}
\aligned
&\dt V^1_{Z^2\oZ^2 U_1^2}\equiv -\frac{3}{2}\, V^1_{Z^2\oZ^2 U_1^2}\,\alpha^4_{U_4},
\endaligned
\end{equation}
which shows it as a relative invariant. Thus, we shall divide the next computations into the two further subbranches:

\begin{description}
  \item[Branch B-2-3-1] when $V^1_{Z^2\oZ^2 U_1^2} \neq 0$,
  \item[Branch B-2-3-2] when $V^1_{Z^2\oZ^2 U_1^2}$ vanishes locally.
\end{description}

Clearly in Branch {\bf B-2-3-1}, we can normalize the Maurer-Cartan form $\alpha^4_{U_4}$ by setting $\| V^1_{Z^2\oZ^2 U_1^2}\|= 1$ and solving its corresponding recurrence relation \eqref{rec-rel-B-3-3}. But, $\alpha^2_{U_3}$ remains unnormalized yet. In order to check possible normalization of this Maurer-Cartan form, let us consider the following {\it most symmetric} (cf. \cite{Olver-2009, Heyd-2024}) surface of this branch
\begin{equation*}
\aligned
v_1&=z\oz\pm\frac{1}{8}\,z^2\oz^2u_1^2+\frac{\i}{4}\,(z^3\oz^2u_2-z^2\oz^3u_2),
\\
v_2&=\frac{1}{2}\,(z^2\oz+z\oz^2)+\frac{1}{8}\,z^2\oz^2u_1u_2,
\\
v_3&=-\frac{\i}{2} (z^2\oz-z\oz^2)+\frac{1}{48}\,(z^2\oz u_1^3+z\oz^2u_1^3),
\\
v_4&= \frac{1}{4}\, z^2\oz^2+\frac{1}{16}\,z^2\oz^2u_2^2+\frac{\i}{32}\,(z^2\oz u_1^2u_2-z\oz^2 u_1^2u_2),
\endaligned
\end{equation*}
which arises after vanishing identically all unconsidered lifted invariants. Among the CR manifolds in this branch, this surface has the maximal dimension. However, our computations reveals that this isotropy group is in fact trivial. Consequently, the remaining Maurer-Cartan form $\alpha^2_{U_3}$ can be normalized in a certain order $\geq 7$. In view of the overwhelming complexity of the required computations, we do not aim to chase the precise place of occurring this normalization and conclude this branch by the obtained {\it partial} moving frame.

\begin{Theorem}
Every $6$-dimensional totally nondegenerate CR submanifold $M\subset\mathbb C^4$ belonging to Branch {\bf B-2-3-1} can be mapped, through some origin-preserving transformation, to the {\sl partial} normal form
\begin{align*}
v_1&=z\oz\pm\frac{1}{8}\,z^2\oz^2u_1^2+\frac{\i}{4}\,(z^3\oz^2u_2-z^2\oz^3u_2)+\frac{V^1_{Z^2\oZ^2U_2^2}}{8}\,z^2\oz^2u_2^2+\sum_{j+k+|\ell|\geq 7}\, \frac{V^1_{Z^j\oZ^kU^\ell}}{j! k! \ell!}z^j\oz^ku^\ell,
\\
v_2&=\frac{1}{2}\,(z^2\oz+z\oz^2)+\frac{1}{8}\,z^2\oz^2u_1u_2+\sum_{j+k+|\ell|\geq 7}\, \frac{V^2_{Z^j\oZ^kU^\ell}}{j! k! \ell!}z^j\oz^ku^\ell,
\\
v_3&=-\frac{\i}{2} (z^2\oz-z\oz^2)+\frac{1}{48}\,(z^2\oz u_1^3+z\oz^2u_1^3)+\sum_{j+k+|\ell|\geq 7}\, \frac{V^3_{Z^j\oZ^kU^\ell}}{j! k! \ell!}z^j\oz^ku^\ell,
\\
v_4&= \frac{1}{4}\, z^2\oz^2+\frac{1}{16}\,z^2\oz^2u_2^2+\frac{\i}{32}\,(z^2\oz u_1^2u_2-z\oz^2 u_1^2u_2)+\frac{\i V^4_{Z^2\oZ^2U_2^2}}{16}\,(z^2\oz u_2^3-z\oz^2u_2^3)
\\
& \ \ \ \ \ \ \ \ \ \ \ \ \ \ \ \ \ \ \ \ \ \ \ \ \ \ \ \ \ \ \ \ \ \ \ \ \ \ \ \ \ \ \ \ \ \ \ \ \ +\sum_{j+k+|\ell|\geq 7}\, \frac{V^2_{Z^j\oZ^kU^\ell}}{j! k! \ell!}z^j\oz^ku^\ell,
\end{align*}
where, regarding the conjugation relation, the coefficients $V_J$ of this normal form enjoy the cross-section $\mathcal K$ in \eqref{partial-cross-sec} supplemented by the normalizations \eqref{normalizations-B-extra}.
\end{Theorem}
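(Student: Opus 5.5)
The plan is to assemble the partial moving frame already built for Branch \textbf{B-2-3-1} and read off the associated normal form. There are three steps: collect the phantom invariants, identify the surviving order-six coefficients, and justify that the remaining group freedom does not affect them.

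First, collect the cross-section. The order zero normalizations \eqref{order 0 normalizations} and Lemmas \ref{lem-ord-1}--\ref{lem-ord-4} give $\mathcal K$ in \eqref{partial-cross-sec}. The Branch \textbf{B} lemmas add $V^4_{Z^2\oZ^2}=1$ (i.e. $\bb=\tfrac14$) together with the normalizations \eqref{normalizations-B-extra}. Since the branch hypothesis is \textbf{B-2}, we have $V^3_{Z^3\oZ}=0$. Hence the only fourth order terms left are $\tfrac14 z^2\oz^2$ in $v^4$ and the quadratic/cubic model terms. By the recurrence formula \eqref{eq: recurrence relation}, the normalized Maurer--Cartan forms are then expressed in terms of the horizontal coframe together with $\alpha^2_{U_3}$ and $\alpha^4_{U_4}$. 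The existence of an origin-preserving transformation realizing these phantom values follows, as in the other branches, by solving the normalization equations \eqref{norm eq} order by order for the group jets (Olver--Pohjanpelto).

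Second, determine the orders five and six. In Branch \textbf{B-2-3}, all nine invariants in \eqref{Rec-rel-ord-5-B3} vanish, and so do the constrained ones via \eqref{relation-ord-5-simplified} and \eqref{relation-order-s-3}. Hence no order-five terms survive. I would differentiate each vanishing invariant with the recurrence relations and set the coefficients of $\omega^Z,\omega^{\oZ},\omega^r$ to zero. This produces a linear homogeneous system for the sixth order invariants, whose solution leaves only the chain of proportional invariants displayed before \eqref{rec-rel-B-3-3}, together with $V^1_{Z^2\oZ^2U_2^2}$ and $V^4_{Z^2\oZ^2U_2^2}$ (and their conjugates). Normalizing $\|V^1_{Z^2\oZ^2U_1^2}\|=1$ via \eqref{rec-rel-B-3-3} fixes $\alpha^4_{U_4}$, with a sign $\pm$ because the invariant is real. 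The proportionality relations then force the listed coefficients: dividing by factorials gives $\pm\tfrac18$, $\tfrac{\i}{4}$, $\tfrac18$, $\tfrac1{48}$, $\tfrac1{16}$ and $\tfrac{\i}{32}$.

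Third, argue that the residual freedom $\alpha^2_{U_3}$ does not affect the listed coefficients up to order six. Their recurrence relations contain no $\alpha^2_{U_3}$ term, and all coefficients of order $\ge7$ are left free apart from $\mathcal K$ and \eqref{normalizations-B-extra}. This justifies calling the result a \emph{partial} normal form. The main obstacle is the second step: solving the large order-six homogeneous system and checking the exact numeric proportionality constants. I would handle it symbolically, using the prolonged forms of \eqref{relation-order-s-3} to eliminate most unknowns first, and verify the constants against the explicit most symmetric surface given in the paper.
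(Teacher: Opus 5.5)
Your proposal is correct and follows essentially the paper's route. It collects the cross-section $\mathcal K$ and the Branch \textbf{B} normalizations, solves the order-six system forced by the vanishing of the invariants in \eqref{Rec-rel-ord-5-B3} to obtain the proportional chain, normalizes $\alpha^4_{U_4}$ through the real relative invariant $V^1_{Z^2\oZ^2U_1^2}$ via \eqref{rec-rel-B-3-3}, and leaves $\alpha^2_{U_3}$ free, which is exactly what makes the normal form partial. Your third step is unnecessary, since the chain relations hold identically on the partial frame and the residual $\alpha^2_{U_3}$-action therefore preserves them automatically; your claim there about the free coefficient $V^1_{Z^2\oZ^2U_2^2}$ is also unsupported. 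Two smaller slips: $V^4_{Z^2\oZ^2U_2^2}=\tfrac12 V^1_{Z^2\oZ^2U_1^2}$ is a member of the chain, not an extra surviving invariant, and because the chain is linear the sign $\pm$ carries over to every chain-determined coefficient, not only the first.
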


In the second subbranch {\bf B-2-3-2}, the recurrence relation \eqref{rec-rel-B-3-3} is of course of no use to normalize any of the remaining Maurer-Cartan forms $\alpha^4_{U_4}$ and $\alpha^2_{U_3}$. It turns out that normalizing these forms is indeed impossible even in higher dimensions. More precisely, by applying the solution of the homogeneous system, mentioned at the beginning of this subsection \ref{subsec-B-2-3}, one finds the structure equations \eqref{struc-equ} as
\begin{equation}\label{struc-eq-B-3-3}
\aligned
\dt \omega^Z &= \bigg(\frac{1}{4}\,\alpha^4_{U_4}-{\rm i}\,\alpha^2_{U_3}\bigg)\wedge\omega^Z, \qquad \qquad
\dt \omega^{\oZ} = \bigg(\frac{1}{4}\,\alpha^4_{U_4}+{\rm i}\,\alpha^2_{U_3}\bigg)\wedge\omega^{\oZ},
\\
\dt \omega^1 &= 2\i \omega^Z\wedge\omega^{\oZ}+\frac{1}{2}\,\alpha^4_{U_4}\wedge\omega^1,
\\
\dt \omega^2 &= -\big(\omega^Z+\omega^{\oZ}\big)\wedge\omega^1+\frac{3}{4}\,\alpha^4_{U_4}\wedge\omega^2+\alpha^2_{U_3}\wedge\omega^3,
\\
\dt \omega^3 &= \!\i \big(\omega^Z-\omega^{\oZ}\big)\wedge\omega^1 - \alpha^2_{U_3}\wedge\omega^2+\frac{3}{4}\,\alpha^4_{U_4}\wedge\omega^3,
\\
\dt \omega^4 &= -\frac{1}{2}\big(\omega^Z+\omega^{\oZ}\big)\wedge\omega^2+\frac{\rm i}{2}\big(\omega^Z-\omega^{\oZ}\big)\wedge\omega^3 +\alpha^4_{U_4}\wedge\omega^4.
\endaligned
\end{equation}
These structure equations are of constant type and thus, by the principles of classical Cartan's theory, no further normalizations of the (pseudo-)group parameters is available. Moreover, the equivalence problem of the manifolds $M$ belonging to this subbranch is identified by the equivalence problem to the $8$-dimensional {\it prolonged space} $M\times\mathcal G^{\sf red}$, where $\mathcal G^{\sf red}$ is a $2$-dimensional reduced subgroup of $\mathcal G$ determined by the remained Maurer--Cartan forms $\alpha^2_{U_3}$ and $\alpha^4_{U_4}$. At this moment, the structure equations of these Maurer--Cartan forms are trivially
\begin{equation}\label{alpha-B-3-3}
\dt \alpha^4_{U_4}=0, \qquad  \dt\alpha^2_{U_3}=0.
\end{equation}

Thus, the equivalence problem to the CR manifolds $M$ belonging to this branch is encoded by the structure equations \eqref{struc-eq-B-3-3}-\eqref{alpha-B-3-3}. As these equations are of constant type, then each two arbitrary such manifolds are biholomorphically equivalent. Recall that in this branch, we have in particular Beloshpka's totally nondegenerate model $M(0,\frac{1}{4})$ with the defining equations (cf. \eqref{def-eq-model})
\begin{equation}
\label{Bel-model-B-3-3}
\aligned
v^1&=z\oz,
\\
v^2&=\frac{1}{2}\,(z^2\oz+z\oz^2),
\\
v^3&=-\frac{\i}{2} (z^2\oz-z\oz^2),
\\
v^4&= \frac{1}{4}\, z^2\oz^2,
\endaligned
\end{equation}
with the $2$-dimensional isotropy group constructed by the infinitesimal {\it dilation} and {\it rotation} vector fields (see \eqref{dilation} and \eqref{rotation} for their corresponding flows)
\begin{equation}
\label{X1-X2}
\aligned
X_1&=z\,\partial_z+2\,w_1\,\partial_{w_1}+3\,w_2\,\partial_{w_2}+3\,w_3\,\partial_{w_3}+4\,w_4\,\partial_{w_4},
\\
X_2&=-\i z\,\partial_z-w_3\,\partial_{w_2}+w_2\,\partial_{w_3},
\endaligned
\end{equation}
which correspond to the two remained unnormalized Maurer--Cartan forms, \cite{Valiquette-SIGMA}. Summing up, we therefore have

\begin{Theorem}
Every $6$-dimensional totally nondegenerate submanifold of $\mathbb C^4$ belonging to the subbranch {\bf B-2-3-2} can be mapped through some origin-preserving holomorphic transformation to Beloshapka's model surface $M(0,\frac{1}{4})$. The isotropy group of these manifolds is $2$-dimensional, generated infinitesimally by the vector fields \eqref{X1-X2}. The mentioned normal form transformation is unique up to the action of this isotropy group.
\end{Theorem}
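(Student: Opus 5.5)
The statement to prove is the last theorem of Branch B-2-3-2. Every manifold there maps to $M(0,\frac14)$; the isotropy is $2$-dimensional, generated by \eqref{X1-X2}; and the normal form map is unique modulo isotropy. I would prove this in three stages: Cartan's equivalence principle for constant type structure equations, a check that the model lies in the branch, and an identification of the isotropy algebra.

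\emph{Step 1: reduce to the structure equations.} In Branch B-2-3-2 all normalizations \eqref{partial-cross-sec}, \eqref{normalizations-B-extra} are in force, together with $V^3_{Z^3\oZ}=0$. The invariants in \eqref{Rec-rel-ord-5-B3} vanish, and so does $V^1_{Z^2\oZ^2U_1^2}$. Solving the homogeneous systems given by the coefficients of the horizontal forms in their recurrence relations expresses the Maurer--Cartan forms $\mu_Z$, $\mu_{U_s}$, $\alpha^r_Z$, $\alpha^r_{\oZ}$, $\alpha^r_{U_s}$ in terms of $\omega^Z,\omega^{\oZ},\omega^r,\alpha^2_{U_3},\alpha^4_{U_4}$. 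Substituting these into \eqref{struc-equ} gives \eqref{struc-eq-B-3-3}. I would then compute $d\alpha^2_{U_3}$ and $d\alpha^4_{U_4}$ by the formula of \cite[Theorem 2.2]{Valiquette-11}, as was done for \eqref{dalpha4-U4}. After the same substitutions every term should cancel, giving \eqref{alpha-B-3-3}.

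\emph{Step 2: apply Cartan's principle.} The $8$ forms $\omega^Z,\omega^{\oZ},\omega^1,\ldots,\omega^4,\alpha^2_{U_3},\alpha^4_{U_4}$ form a coframe on the prolonged space $M\times\mathcal G^{\sf red}$, and its structure equations have constant coefficients. By Cartan's theorem they are the Maurer--Cartan equations of a fixed $8$-dimensional Lie algebra. Hence any two such coframes are locally equivalent, and because we work in the real-analytic setting, the equivalence projects to a biholomorphism of the underlying CR manifolds. The model $M(0,\frac14)$ in \eqref{Bel-model-B-3-3} satisfies every branch condition: the only nonzero coefficients are those of order $\le 4$, so all the higher invariants vanish. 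Therefore every $M$ in this branch is mapped to $M(0,\frac14)$ by an origin-preserving biholomorphism.

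\emph{Step 3: isotropy and uniqueness.} The symmetry group of a constant type coframe of rank $0$ has dimension $\dim(M\times\mathcal G^{\sf red})=8$, so the isotropy at the origin is $8-6=2$-dimensional. This also matches the two unnormalized forms, by \cite{Valiquette-SIGMA}. To identify the isotropy, I would check directly that $\re X_1$ and $\re X_2$ in \eqref{X1-X2} are tangent to \eqref{Bel-model-B-3-3}.
\begin{itemize}
\item[$\bullet$] $X_1$ is the weighted dilation, and every defining equation is weighted-homogeneous.
\item[$\bullet$] For $X_2$, note that $z^2\oz\mapsto e^{-\i\theta}z^2\oz$ under $z\mapsto e^{-\i\theta}z$, so $v^2+\i v^3=z\oz^2$ transforms compatibly with the rotation of $(w_2,w_3)$. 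Also $z\oz$ and $z^2\oz^2$ are invariant.
\end{itemize}
These two fields vanish at $0$ and are independent, so they span the isotropy algebra. Uniqueness of the normal form map modulo this group follows because two maps to the model differ by an automorphism of the model fixing the origin.

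The main obstacle is Step 1. It requires a long symbolic verification that, after imposing all the vanishing conditions and the order-six relations, the torsion in $d\omega^4$, $d\alpha^2_{U_3}$ and $d\alpha^4_{U_4}$ reduces exactly to constants. I would organize this by weight, using the dilation grading, to limit which terms can appear.
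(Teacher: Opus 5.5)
Your route is the same as the paper's. You reduce to the constant-type structure equations \eqref{struc-eq-B-3-3}--\eqref{alpha-B-3-3} on the $8$-dimensional space $M\times\mathcal G^{\sf red}$ and apply Cartan's principle to get equivalence with $M(0,\frac14)$. You then read off a $2$-dimensional isotropy; your rank-zero count $8-6=2$ is a more explicit justification than the paper's appeal to \cite{Valiquette-SIGMA}.

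The step that fails is the one you add beyond the paper: the direct tangency check for $X_2$. On the model,
\[
v^2+\i v^3=\tfrac12(z^2\oz+z\oz^2)+\tfrac12(z^2\oz-z\oz^2)=z^2\oz ,
\]
not $z\oz^2$; the latter is $v^2-\i v^3$. The flow of $X_2$ in \eqref{X1-X2} (this is \eqref{rotation}) sends $z\mapsto e^{-\i t}z$ and $w_2+\i w_3\mapsto e^{\i t}(w_2+\i w_3)$. It therefore multiplies $v^2+\i v^3$ by $e^{\i t}$ but multiplies $z^2\oz$ by $e^{-\i t}$. Equivalently, with $\rho:=v^2+\i v^3-z^2\oz$, applying $2\,\re X_2=X_2+\overline{X_2}$ gives $\i(v^2+\i v^3)+\i z^2\oz=2\i z^2\oz\neq 0$ on $M(0,\frac14)$. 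As a concrete case, take $t=\frac{\pi}{2}$ and the model point $z=\i$, $v^2=0$, $v^3=1$. It goes to $Z=1$, $V^2=-1$, whereas the model requires $V^2=\re Z\,|Z|^2=1$. So $\re X_2$ as printed is not tangent to the model. Your verification only appears to succeed because of the slip $z^2\oz\leftrightarrow z\oz^2$.

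Done correctly, your check shows the isotropy algebra is spanned by $\re X_1$ and $\re X_2'$, where $X_2'=\i z\,\partial_z-w_3\,\partial_{w_2}+w_2\,\partial_{w_3}$ (equivalently, $Z=e^{\i\theta}z$ in \eqref{rotation}). So the generator in the statement needs this sign correction. The $2$-dimensionality, the equivalence to $M(0,\frac14)$ and the uniqueness modulo isotropy are unaffected.

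A smaller point concerns showing that the model lies in Branch \textbf{B-2-3-2}. This needs the relevant invariants to vanish on a neighbourhood of $0$, not merely the Taylor coefficients of order $\geq 5$ at the origin. It follows from the homogeneity of $M(0,\frac14)$, and you should invoke that explicitly.
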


\section{Convergence of the normal forms}
\label{sec-convergence}

Having finalized the construction of the desired normal forms, we now turn to the proof of their convergence. For this purpose, we employ the recent criterion established in \cite{OSV-preprint}. This criterion is rooted in the theory of {\it involutive} differential equations and relies crucially on the celebrated Cartan–K\"{a}hler theorem \cite{Seiler}.
Before stating the main result of \cite{OSV-preprint} in our present setting, let us first gather some necessary preliminaries.

A glance on the fourteen branching normal forms obtained in this paper reveals that, with the exceptions of {\bf A$'$‑2‑2‑2}, {\bf A$''$‑3‑2‑2}, and {\bf B‑2‑3‑2}, every six‑dimensional totally nondegenerate CR manifold in $\mathbb C^5$ possesses a trivial isotropy group. Biholomorphically, these three exceptional branches include only the Beloshapka's model surfaces \eqref{model-A-2-2-2}, \eqref{model-A'-2-2-2} and \eqref{Bel-model-B-3-3}, which are obviously real‑analytic. Consequently, these three branches may be excluded from the subsequent convergence analysis, and we may henceforth assume that the isotropy groups of all CR manifolds in question, are trivial, as is required by the convergence theorem of \cite{OSV-preprint}.

In this section, we verify the convergence of the normal forms arising in Branch {\bf A$'$}. The same argument also proves convergence for the normal forms in the other branches.

As in \cite[Example 10.5]{OSV-preprint}, the determining Cauchy-Riemann PDE system \eqref{eq: determining equations} for the holomorphic pseudo-group $\mathcal G$ fails to be involutive (we refer the reader to \cite{Seiler, OSV-preprint} for definition). But, applying the standard real-to-complex linear change of variables
\begin{equation*}
u_j=\frac{w^j+\ow^j}{2} \qquad {\rm and} \qquad v^j=\frac{w^j-\ow^j}{2\i}, \qquad j=1,\ldots 4,
\end{equation*}
converts it to the involutive complex PDE system
\begin{equation}
\label{eq-CR-complex}
Z_{\oz}=Z_{\ow^j}=W^k_{\oz}=W^k_{\ow^j}=0, \qquad j,k=1, \ldots, 4.
\end{equation}
In order to check the involution of this system, we assign the order
\begin{equation}\label{order}
w^4\prec w^3\prec w^2\prec w^1\prec z\prec \oz\prec\ow^4\prec\ow^3\prec\ow^2\prec\ow^1
\end{equation}
to its independent variables. For every
 $j=1, \ldots, 10$, let ${\sf x}_j$ be the $j$-th variable appearing in the above increasing ordering chain \eqref{order}.
  We then define {\it Cartan index} ${\sf b}^{(j)}$ to be the total number of derivative operators $\partial_{{\sf x}_j}$ that occur in the determining system \eqref{eq-CR-complex}:
\[
{\sf b}^{(1)}=\cdots={\sf b}^{(5)}=0, \qquad {\sf b}^{(6)}=\cdots={\sf b}^{(10)}=5.
\]
On the other hand, prolonging the system \eqref{eq-CR-complex} to the second order gives rise to $200$ algebraically independent differential equations. Hence, the rank of the second order prolonged system is $r_2=200$. Consequently, the system is involutive, as it satisfies the {\it Cartan involutivity test}
\[
\sum_{j=1}^{10} \, j \cdot {\sf b}^{(j)} = 200 = r_2
\]
and there is no integrability condition defined on it.

 The cross-section $\mathcal K_{\bf A^\prime}$ associated to Branch {\bf A}$^\prime$ is
\begin{equation}
\label{cross-section-A'}
\aligned
\mathcal K_{\bf A^\prime}=\bigg\{& v^1_{z\oz}=v^2_{z^2\oz}=1, \qquad v^3_{z^2\oz}=-\i, \qquad v^4_{z^3\oz}=\i\bigg\} \ \ \bigcup
\\
\bigg\{&v^\kappa_{z^{j}u^\ell}=v^\kappa_{z\oz u^\ell}=v^2_{z^2\oz u^\ell}=v^3_{z^2\oz u_2^j u_3^k u_4^l}=v^4_{z^2\oz u_3^k u_4^l}=\re v^4_{z^2\oz u_2^{j+1} u_3^k u_4^l}=v^1_{z^{3+j}\oz u^\ell}
\\
= & v^2_{z^2\oz^2 u_2^j u_3^k u_4^l}=v^2_{z^3\oz u^\ell}=v^3_{z^2\oz^2 u_3^k u_4^l}=v^1_{z^2\oz^2 u_3^k u_4^l}=v^3_{z^2\oz u_1 u_2^j u_3^k u_4^l}=v^4_{z^3\oz u_3^k u_4^l}=v^3_{z^3\oz u_4^l}
\\
= & v^3_{z^3\oz u_1 u_4^l}= \im v^2_{z^3\oz^2 u_4^l}=v^1_{z^2\oz^2 u_2 u_3^k u_4^l}=\re v^3_{z^3\oz u_2 u_4^l}=\im v^4_{z^2\oz u_2 u_4^l}=0\bigg\},
\endaligned
\end{equation}
for $j,k,l\geq 0, \ell\in\mathbb N^4_0$ and $\kappa=1,\ldots, 4$. This cross-section is added in the subsequent subbranches of {\bf A}$^\prime$ by an additional equality arising from the normalization of $\alpha^4_{U_4}$. However, this {\it finite number} of extra equations does not affect the argument. Also notice that the conjugate jets have not been included in $\mathcal K_{\bf A^\prime}$ as they will be recovered automatically through the normalization process of the {\it real} defining functions of the surfaces, without requiring any further attempt.

Since the Cauchy-Riemann PDE system \eqref{eq-CR-complex} becomes involutive in the complex coordinates $z, \oz, w^j, \ow^j$, $j=1,\ldots, 4$, we have to recast it in terms of these new variables. For this purpose, we rewrite the real defining equations
\[
v^\kappa=v^\kappa(z,\oz,u_1, \ldots, u_4) \qquad \kappa=1,\ldots, 4,
\]
of the CR manifold $M$ in terms of the complex coordinates,
\begin{equation}
\label{def-eq-complex}
\ow^\kappa(z,\oz,w)=w^\kappa-2\i v^\kappa\big(z,\oz, \frac{w^1+\ow^1(z,\oz,w)}{2}, \ldots, \frac{w^4+\ow^4(z,\oz,w)}{2}\big) \qquad \kappa=1,\ldots,4,
\end{equation}
which can be solved for $\ow^\kappa$s by means of the implicit function theorem. In this new coordinates, then we regard $\ow^1, \ldots, \ow^4$ as the dependent variables while $z, \oz, w^1, \ldots, w^4$ play the role of independent ones. Thus, whereas in the former coordinates the jets were of the forme $v^\kappa_{z^j \oz^k u^\ell}$, they now take the form $\ow^\kappa_{z^j \oz^k w^\ell}$.

Implicit differentiation of the equations \eqref{def-eq-complex} produces expressions for the new jet coordinates $\ow^\kappa_J$ in terms of the original ones $v^\kappa_J$. At the first order level, for instance, one obtains --- employing the Einstein summation convention --- the expressions
\begin{equation}
\label{first-order}
\aligned
\ow^\kappa_z=-2\i v^\kappa_z-\i v^\kappa_{u_j}\ow^j_z, \qquad \ow^\kappa_{w^r}=\delta^\kappa_r-\i v^\kappa_{u_r}-\i v^\kappa_{u_j}\ow^j_{w^r}, \qquad \kappa, r=1,\ldots,4,
\endaligned
\end{equation}
where $\delta^\kappa_r$ is the Kronecker delta.
Now, evaluating the above equation at the first order cross-section elements $v^\kappa_z=v^\kappa_{\oz}=v^\kappa_{u_r}=0$ in \eqref{cross-section-A'} gives rise to
\[
\ow^\kappa_z=0, \qquad {\rm and} \qquad \ow^\kappa_{w^r}=\delta^\kappa_r,
\]
In addition, an induction on the successive derivations of the equations \eqref{first-order} with respect to the variables $z, w^1, \ldots, w^4$ and upon applying the cross-section conditions \eqref{cross-section-A'} yields that
\[
\ow^\kappa_{z^jw^\ell}=0,
\]
for every $j\geq 0, \neq\ell\in\mathbb N^4_0$ with $(j, \ell)\neq (0,0)$.

Next, in order two, our computations show for $\kappa=1,\ldots,4$ that
\begin{equation}\label{second-order}
\aligned
\ow^\kappa_{z\oz}=-2\i \big(v^\kappa_{z\oz}+\frac{1}{2}\,v^\kappa_{zu_j}\ow^j_{\oz}\big)-\i \big(v^\kappa_{u_j}\ow^j_{z\oz}+v^\kappa_{\oz u_j}\ow^j_{z}+\frac{1}{2}\,v^\kappa_{u_ju_l}\ow^j_z\ow^l_{\oz}\big).
\endaligned
\end{equation}
Examining these equations upon the cross-section \eqref{cross-section-A'} immediately implies that
\[
\ow^1_{z\oz}=-2\i, \qquad \ow^2_{z\oz}=\ow^3_{z\oz}=\ow^4_{z\oz}=0.
\]
Again, applying a certain induction on the equations \eqref{second-order} gives in addition that
\[
\ow^\kappa_{z\oz w^\ell}=0
\]
for every nonzero vector $\ell\in\mathbb N^4_0$. Proceeding analogously for computing $\ow^\kappa_{z^2\oz}, \ow^\kappa_{z^2\oz^2}, \ow^\kappa_{z^3\oz}, \ldots$ and applying the appropriate inductions, we ultimately obtain the following translation of the cross-section \eqref{cross-section-A'} in terms of the new complex coordinates --- for the notational convenience, henceforth we write $w_1, \ldots, w_4$ instead of $w^1, \ldots, w^4$ in the indices
\begin{equation}
\label{cross-section-A'-complex-coord}
\aligned
\widetilde{\mathcal K}_{\bf A^\prime}=\bigg\{&\ow^\kappa_{w_\kappa}=1, \qquad \ow^1_{z\oz}=\ow^2_{z^2\oz}=-2\i, \qquad \ow^3_{z^2\oz}=-2, \qquad \ow^4_{z^4\oz}=2 \bigg\} \ \ \bigcup
\\
\bigg\{& \ow^\kappa_{z^j w^\ell}=\ow^\kappa_{z\oz w^\ell}=\ow^2_{z^2\oz w^\ell}=\ow^3_{z^2\oz w_2^j w_3^k w_4^l}=\ow^4_{z^2\oz w_3^k w_4^l}=\ow^4_{z^2\oz w_2^{j+1} w_3^k w_4^l}+\ow^4_{z\oz^2 w_2^{j+1} w_3^k w_4^l}
\\
=&w^1_{z^{3+j}\oz w^\ell}=\ow^2_{z^2\oz^2 w_2^j w_3^k w_4^l}=\ow^2_{z^3\oz w^\ell}=\ow^3_{z^2\oz^2 w_3^k w_4^l}=\ow^1_{z^2\oz^2 w_3^k w_4^l}=\ow^3_{z^2\oz w_1 w_2^j w_3^k w_4^l}
\\
=&\ow^4_{z^3\oz w_3^k w_4^l}=\ow^3_{z^3\oz w_4^l}=\ow^3_{z^3\oz w_1 w_4^l}= \ow^2_{z^3\oz^2 w_4^l}-\ow^2_{z^2\oz^3 w_4^l}=\ow^1_{z^2\oz^2 w_2 w_3^k w_4^l}
\\
& \ \ \ \ \ \ \ \ \ \ \ \ \ \ \ \ \ \ \ \ \ \ \ \ \ \ \ \ \ \ \ \ \ \ \ \ \ \ \ \ \ \ \ \ \ \ =\ow^3_{z^3\oz w_2 w_4^l}+\ow^3_{z\oz^3 w_2 w_4^l}=\ow^4_{z^2\oz w_2 w_4^l}-\ow^4_{z\oz^2 w_2 w_4^l}=0\bigg\},
\endaligned
\end{equation}
for $j,k,l\geq 0, \ell\in\mathbb N^4_0$ and $\kappa=1,\ldots, 4$.

With the formal basis symbols ${\bf e}_1, \ldots, {\bf e}_4$, define the {\it index set}
\[
{\bf I}_{{\bf A^\prime}}:=\mathcal C^1 \, {\bf e}_1\biguplus\mathcal C^2 \, {\bf e}_2\biguplus\mathcal C^3 \, {\bf e}_3\biguplus\mathcal C^4 \, {\bf e}_4,
\]
of the cross-section $\widetilde{\mathcal K}_{\bf A^\prime}$, where for each $\nu=1, \ldots, 4$, $\mathcal C^\nu$ denotes the set of indices of $\ow^\nu$ which are visible in $\widetilde{\mathcal K}_{\bf A^\prime}$. For example, two equations
\[
\ow^3_{z^2\oz w_2^j w_3^k w_4^l}=0 \qquad {\rm and} \qquad \ow^4_{z^2\oz w_2 w_4^l}-\ow^4_{z\oz^2 w_2 w_4^l}=0
\]
in $\widetilde{\mathcal K}_{\bf A^\prime}$ correspond to
\[
z^2\oz w_2^j w_3^k w_4^l\in\mathcal C^3 \qquad {\rm and} \qquad z^2\oz w_2 w_4^l-z\oz^2 w_2 w_4^l\in\mathcal C^4.
\]
The degree of an element $f\,{\bf e}_\nu\in {\bf I}_{{\bf A^\prime}}$ is defined as the degree of $f$ as a monomial or binomial. For each $n\geq 0$, we also denote ${\bf I}^{(n)}_{{\bf A^\prime}}$ the set of all {\it symbol} monomials and binomials in ${\bf I}_{{\bf A^\prime}}$ which are of (homogeneous) degree $\geq n$.

\begin{Definition}
In accordance with the given order $w^4\prec w^3\prec w^2\prec w^1\prec z\prec \oz$ in \eqref{order}, we say a monomial
\[
w_4^{i_1}w_3^{i_2}w_2^{i_3}w_1^{i_4}z^{i_5}\oz^{i_6}\,{\bf e}_\nu\in {\bf I}_{{\bf A^\prime}}
\]
is of {\it class} $\kappa$ if it is the smallest index such that $i_\kappa\neq 0$. The class of a binomial in ${\bf I}_{{\bf A^\prime}}$ is defined as the {\sl equal} class of its two monomials. For a given monomial (binomial) $m\in {\bf I}_{{\bf A^\prime}}$ of class $\kappa$, the {\it (Pommaret) involutive cone} $\mathcal C(m)$ is the set of all monomials (binomials) constituted by multiplying $m$ with monomials of class $\leq\kappa$.

\end{Definition}

As is shown in \cite{OSV-preprint}, the corresponding normal form to a {\it minimal} cross-section converges whenever for some $n\geq 0$, the associated index set ${\bf I}^{(n)}$ possesses a {\it Rees decomposition}; meaning that ${\bf I}^{(n)}$ can be expressed as a finite disjoint union of involutive cones. More precisely, we actually showed in \cite{OSV-preprint} that the existence of such decomposition for ${\bf I}^{(n)}$ guarantees that the initial conditions, identified by the cross-section for the associated normal form differential system, satisfy the hypothesis of the Cartan-K\"{a}hler theorem \cite[Theorem 9.4.1]{Seiler}.

\begin{Remark}
Although the main results of \cite{OSV-preprint} are formulated in terms of coordinate cross-sections, it is well-known that the linear span of the index set $\bf I$ admits always a {\it monomial} basis (see the paragraph preceding to \cite[Proposition 5.1.3]{Seiler}). These monomial generators in turn correspond to a coordinate cross-section which is equivalent to the original one.
\end{Remark}

In our case, the index set ${\bf I}^{(6)}_{\bf A'}$ admits the Rees decomposition
\begin{equation}
\label{Rees-dec}
\footnotesize
\aligned
{\bf I}^{(6)}_{\bf A'}=&\biguplus_{\alpha=1}^4\bigg(\biguplus_{j+|\ell|=6}\,\mathcal C(z^jw^\ell)\biguplus_{|\ell| = 4}\,\mathcal C(z\oz w^\ell)\bigg]
\bigg)\,{\bf e}_{\alpha}
\\
&\biguplus\bigg(\biguplus_{j+|\ell|=2}\,\mathcal C(z^{3+j}\oz w^\ell)\biguplus_{j+k = 2}\,\mathcal C(z^2\oz^2 w_3^{j} w_{4}^{k})\biguplus_{k = 3}^4\,\mathcal C(z^2\oz^2 w_2 w_k)\bigg)\,{\bf e}_1
\\
& \biguplus \bigg(\biguplus_{|\ell| = 3}\,\mathcal C(z^2\oz w^\ell)\biguplus_{j+k+l= 2}\,\mathcal C(z^2\oz^2 w_2^jw_3^{k} w_{4}^{l})\biguplus_{|\ell| = 2}\,\mathcal C(z^3\oz w^\ell)\biguplus\,\mathcal C(z^3\oz^2 w_4-z^2\oz^3 w_4)\bigg)\,{\bf e}_2
\\
& \biguplus \bigg(\biguplus_{i=0,1 \atop j+k+l = 3}\,\mathcal C(z^2\oz w_1 w_2^jw_3^kw_4^l)\biguplus_{k+l=2}\,\mathcal C(z^2\oz^2 w_3^kw_4^l)\biguplus_{j=1}^2\,\mathcal C(z^3\oz w_1^{2-j} w_4^j)
\biguplus\,\mathcal C(z^3\oz w_2 w_4+z\oz^3w_2w_4)\bigg)\,{\bf e}_3
\\
&\biguplus \bigg(\biguplus_{k+l=3}\,\mathcal C(z^2\oz w_3^k w_4^l)\biguplus_{k+l=2}\,\mathcal C(z^3\oz w_3^k w_4^l)\biguplus_{j+k+l=2}\,\mathcal C(z^2\oz w_2^{j+1}w_3^k w_4^l+z\oz^2 w_2^{j+1}w_3^k w_4^l)
\\
&\ \ \ \ \ \ \ \ \ \ \ \biguplus\,\mathcal C(z^2\oz w_2 w_4^2-z\oz^2 w_2 w_4^2)\bigg)\,{\bf e}_4.
\endaligned
\end{equation}
By virtue of the fact that our constructed cross-section $\widetilde{\mathcal K}_{\bf A^\prime}$ is minimal, the existence of the Rees decomposition \eqref{Rees-dec} ensures the convergence of its associated normal form, as was claimed.

\subsection*{Acknowledgments}
The author, gratefully acknowledges the crucial guidance and invaluable remarks provided by Peter Olver and Francis Valiquette throughout the preparation of this paper. Their worth discussions played a pivotal role in shaping this paper. He also thanks sincerely Jo\"{e}l Merker for his helpful comments and discussions. The research of the author was supported, in part, by the Iran
National Science Foundation (INSF), under the project No. 4031893, and the Institute for Research in Fundamental Science (IPM), grant No.\ 1401510415.

\end{document}